\documentclass[11pt,reqno]{amsart}
\usepackage{amssymb, latexsym}
\usepackage[curve,matrix,arrow]{xy}
\usepackage{pifont}
\usepackage{multirow}
\textwidth 15cm
\oddsidemargin 0.7cm
\evensidemargin 0.7cm

\setcounter{MaxMatrixCols}{14}

\newtheorem{defin}{Definition}[section]

\newtheorem{lemma}[defin]{Lemma}
\newtheorem{theorem}[defin]{Theorem}

\newtheorem{remark}[defin]{Remark}
\newtheorem{example}[defin]{Example}
\newtheorem{problem}[defin]{Problem}

\newcommand{\ra}{\rightarrow}

\newcommand{\ms}{\mapsto}
\newcommand{\ol}{\overline}
\newcommand{\im}{\mathrm{im}}

\newcommand{\N}{\mathbb N}

\newcommand{\FF}{\mathcal F}
\newcommand{\LL}{\mathcal L}
\newcommand{\TT}{\mathcal T}
\renewcommand{\SS}{\mathcal S}

\renewcommand{\L}{\mathcal L}
\newcommand{\Z}{\mathbb Z}
\newcommand{\Q}{\mathbb Q}
\newcommand{\R}{\mathcal R}
\renewcommand{\O}{\mathcal O}

\newcommand{\GL}{{\mathrm{GL}}}

\newcommand{\Aut}{{\mathrm{Aut}}}
\newcommand{\End}{{\mathrm{End}}}
\newcommand{\Hom}{{\mathrm{Hom}}}

\newcommand{\Stab}{{\mathrm{Stab}}}
\newcommand{\SL}{{\mathrm{SL}}}
\newcommand{\PGL}{{\mathrm{PGL}}}

\newcommand{\id}{{\mathrm{id}}}
\newcommand{\rank}{{\mathrm{rank}}}

\newcommand{\MySigma}{\Gamma}

\newenvironment{items}{\begin{list}{$\alph{item})$}
{\labelwidth18pt \leftmargin18pt \topsep3pt \itemsep1pt \parsep0pt}}
{\end{list}}

\begin{document}

\title{The conjugacy problem in $\GL(n,\Z)$}

\author{Bettina Eick} 
\address{
Institut Computational Mathematics,
Technische Universit\"at Braunschweig, 	
38106 Braunschweig, Germany}
\email{beick@tu-bs.de}

\author{Tommy Hofmann}
\address{
Fachbereich Mathematik, 
Technische Universtit\"at Kaiserslautern, 
67653 Kaisers\-lautern, Germany}
\email{thofmann@mathematik.uni-kl.de}

\author{E.A.\ O'Brien}
\address{
Department of Mathematics, 
University of Auckland,
Auckland, New Zealand}
\email{e.obrien@auckland.ac.nz}

\thanks{Eick and O'Brien were both supported by the Alexander von 
Humboldt Foundation, Bonn, and by the Marsden Fund of New Zealand
via grant UOA 1323. They thank the Hausdorff Institute, Bonn, 
for its hospitality while this work was completed.
Hofmann was supported by Project II.2 of SFB-TRR 195 `Symbolic Tools in
Mathematics and their Application'
of the German Research Foundation (DFG). 
We thank Stefano Marseglia 
and Gabriele Nebe for helpful comments.} 

\subjclass[2010]{Primary 20C15; Secondary 20D05, 11Y40}

\date{\today}

\begin{abstract}
We present a new algorithm that, given two matrices in $\GL(n,\Q)$, 
decides if they are conjugate in $\GL(n,\Z)$ and, if so, determines a 
conjugating matrix. We also give an algorithm to construct a generating 
set for the centraliser in $\GL(n,\Z)$ of a matrix in $\GL(n,\Q)$. We do 
this by reducing these problems respectively to the isomorphism
and automorphism group problems for certain modules over rings of the
form $\O_K[y]/(y^l)$, where $\O_K$ is the maximal order of an algebraic 
number field and $l \in \N$, and then provide algorithms to solve the 
latter. The algorithms are practical and our implementations are publicly 
available in {\sc Magma}.  
\end{abstract}
\maketitle

\section{Introduction}

Let $T$ and $\hat{T}$ be elements of $\GL(n,\Q)$. The {\it rational 
conjugacy problem} asks if there exists $X \in \GL(n,\Q)$ such that 
$XTX^{-1} = \hat{T}$. It is well-known that this can be decided 
effectively by computing and comparing the rational canonical forms 
of $T$ and $\hat{T}$. 
More difficult is the {\em integral conjugacy problem}: decide whether
or not there  exists $X \in \GL(n,\Z)$ with $X T X^{-1} = \hat{T}$. 
Clearly, if $T$ and $\hat{T}$ are not conjugate in $\GL(n,\Q)$, then 
they are not conjugate in $\GL(n,\Z)$, but the converse does not hold.
Associated to the integral conjugacy problem is the {\em centraliser 
problem}: determine a generating set for $C_\Z(T) = \{ X \in \GL(n,\Z) 
\mid X T X^{-1} = T\}$. Since $C_\Z(T)$ is arithmetic, 
the work of Grunewald and Segal \cite{GS80} implies that it is both 
finitely generated and finitely presented.  But no practical algorithm 
to compute a finite generating set for an arithmetic group is known. 

Grunewald \cite{Gru80} proved that the integral conjugacy and centraliser 
problems are decidable. We recall the basic ideas of this proof briefly.
Let $\O_K$ denote the maximal order of the algebraic number field $K$,
let $l \in \N$ and denote $P_l(\O_K) = \O_K[y]/(y^l)$. A $P_l(\O_K)$-module
$M$ is integral if $M \cong \Z^n$ for some $n \in \N$ as abelian group.
Grunewald described how to reduce the integral conjugacy and centraliser
problems respectively to the isomorphism and automorphism group problems of integral 
$P_l(\O_K)$-modules. He proved that such a module has submodules of finite 
index which have a certain integral canonical form,
and exhibited how
the isomorphism and automorphism group problems can be solved using these
{\it standard} submodules.  A critical 
weakness of his method is that all standard submodules of an 
unknown index in a given module must be constructed, but no practical 
algorithm for this purpose was provided. 

Let $M$ be a $P_l(\O_K)$-module. 
Define $K_i(M) = \{ m \in M \mid m y^i = 0\}$ for $0 \leq i \leq l + 1$
and $L_i(M) = K_{i+1}(M)y +K_{i-1}(M)$ for $1 \leq i \leq l$.
Now $M$ is {\em standard} if it is integral and 
$Q_i(M) = K_i(M)/L_i(M)$ is free as $\O_K$-module for 
$1 \leq i \leq l$. We investigate the structure of standard 
$P_l(\O_K)$-modules in detail and exhibit effective methods to 
solve their isomorphism and automorphism group problems; this also corrects 
Grunewald's solution to the latter problem. We then 
investigate the standard submodules of an arbitrary 
integral $P_l(\O_K)$-module.  Our main result is the following.
\begin{theorem}
\label{main}
Let $M$ be an integral $P_l(\O_K)$-module and let $S$ be a standard 
submodule of finite index in $M$. Let $r_i$ be the rank of $Q_i(S)$ 
as free $\O_K$-module for $1 \leq i \leq l$.
\begin{itemize}
\item[\rm (a)]
The map $$K_i(S)/L_i(S) \ra K_i(M)/L_i(M) : a + L_i(S) \ms a + L_i(M)$$
induces an embedding
of $Q_i(S)$ into $Q_i(M)$ with image $\ol{Q}_i(S)$, say.
\item[\rm (b)]
If $[Q_i(M):\ol{Q}_i(S)] = h_i$ for $1 \leq i \leq l$, then $[M:S] = 
h_1 h_2^2 \cdots h_l^l$.
\item[\rm (c)]
There are exactly 
$$ \prod_{i = 1}^l \frac{(h_i \cdots h_l)^{(r_i + \ldots + r_l)}}{ h_i^{r_i}}$$ 
standard
submodules $T$ in $M$ with $\ol{Q}_i(T) = \ol{Q}_i(S)$ for $1 \leq i \leq l$.
\end{itemize}
\end{theorem}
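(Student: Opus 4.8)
The plan is to prove (a) and (b) by filtration arguments that exploit the $\O_K$-freeness of each $Q_i(S)$, and then to deduce (c) by induction on $l$. For (a): since $K_j(S) = S \cap K_j(M)$ for all $j$, the inclusion $K_i(S)\hookrightarrow K_i(M)$ carries $L_i(S) = K_{i+1}(S)y+K_{i-1}(S)$ into $L_i(M)$, so the map is well defined, with image $(K_i(S)+L_i(M))/L_i(M) = \ol Q_i(S)$. For injectivity it suffices to see that $K_i(S)\cap L_i(M)=L_i(S)$: if $a\in K_i(S)$ and $a = by+c$ with $b\in K_{i+1}(M)$, $c\in K_{i-1}(M)$, choose $N\ge 1$ with $NM\subseteq S$; then $Nb\in K_{i+1}(S)$, $Nc\in K_{i-1}(S)$, so $Na\in L_i(S)$, and as $Q_i(S)$ is $\O_K$-free, hence $\Z$-torsion free, already $a\in L_i(S)$.

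For (b): the subgroup chain $S=K_0(M)+S\subseteq K_1(M)+S\subseteq\dots\subseteq K_l(M)+S=M$, the modular law, and $K_i(S)=S\cap K_i(M)$ give $[M:S]=\prod_{i=1}^l[K_i(M):K_{i-1}(M)+K_i(S)]$. Each factor equals $H_i:=h_ih_{i+1}\cdots h_l$ by the recursion $[K_i(M):K_{i-1}(M)+K_i(S)]=h_i\cdot[K_{i+1}(M):K_i(M)+K_{i+1}(S)]$, the right factor being $1$ for $i=l$ since $K_{l+1}(M)=M=K_l(M)$. To see it, interpolate $L_i(M)$ between $K_{i-1}(M)+K_i(S)$ and $K_i(M)$: the upper quotient is $Q_i(M)/\ol Q_i(S)$, of order $h_i$ by (a), and the lower quotient, via the isomorphism $K_{i+1}(M)/K_1(M)\xrightarrow{\sim}K_{i+1}(M)y$ induced by $y$, has order $[K_{i+1}(M):\{x\in K_{i+1}(M):xy\in K_{i-1}(M)+K_i(S)\}]$. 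The crucial identity is $\{x\in K_{i+1}(M):xy\in K_{i-1}(M)+K_i(S)\}=K_i(M)+K_{i+1}(S)$: ``$\supseteq$'' is clear; for ``$\subseteq$'', writing $xy=k+s$ with $k\in K_{i-1}(M)$, $s\in K_i(S)$ gives $s=xy-k\in L_i(M)\cap K_i(S)=L_i(S)$ by (a), so $s=s'y+c$ with $s'\in K_{i+1}(S)$, $c\in K_{i-1}(S)$, hence $(x-s')y\in K_{i-1}(M)$ and $x-s'\in K_i(M)$. Multiplying out, $[M:S]=\prod_{i=1}^l H_i=\prod_{j=1}^l h_j^{\,j}$.

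For (c): induct on $l$, the case $l\le 1$ being immediate (the only standard $T$ with $\ol Q_1(T)=\ol Q_1(S)$ is then $S$). Put $\ol M:=M/K_1(M)$, an integral $P_{l-1}(\O_K)$-module, and $\ol T:=(T+K_1(M))/K_1(M)$ for $T\subseteq M$. One verifies that $T\mapsto\ol T$ sends standard submodules of finite index in $M$ to standard submodules of finite index in $\ol M$, that $Q_j(T)\cong Q_{j-1}(\ol T)$ for $2\le j\le l$ compatibly with the embeddings of (a) (so $\ol T$ has rank invariants $(r_2,\dots,r_l)$), and that ``$\ol Q_i(T)=\ol Q_i(S)$ for all $i$'' amounts to ``$\ol Q_1(T)=\ol Q_1(S)$ together with the analogous condition for $\ol T$ in $\ol M$''. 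By the induction hypothesis, applied to $\ol M$ with invariants $(r_2,\dots,r_l)$ and indices $(h_2,\dots,h_l)$, there are $\prod_{i=2}^l(h_i\cdots h_l)^{r_i+\cdots+r_l}/h_i^{r_i}$ admissible $\ol T$. It therefore remains to show that for each admissible $\ol T$ the set of standard $T$ with $(T+K_1(M))/K_1(M)=\ol T$ and $\ol Q_1(T)=\ol Q_1(S)$ has size exactly $(h_1\cdots h_l)^{r_1+\cdots+r_l}/h_1^{r_1}$, independently of $\ol T$; multiplying these then yields the stated product.

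To count this fibre, let $M'\subseteq M$ be the preimage of $\ol T$ (so $T\subseteq M'$ and $T+K_1(M)=M'$) and $\mathcal L\subseteq K_1(M)$ the preimage of $\ol Q_1(S)$. Then $L_1(T)=K_2(T)y$ depends only on $\ol T$ (it is $y$ applied to the preimage in $M$ of the $y$-kernel of $\ol T$), while $[L_1(M):L_1(T)]=[K_2(M):K_1(M)+K_2(T)]=h_2\cdots h_l$ and $[K_1(M):K_1(T)]=h_1\cdots h_l$ by (b) applied to $T$. Now $K_1(T)$ ranges over the complements of $L_1(M)/L_1(T)$ in $\mathcal L/L_1(T)$; since $\mathcal L/L_1(M)\cong\ol Q_1(S)$ is $\O_K$-free of rank $r_1$ and $L_1(M)/L_1(T)$ is finite, these are parametrised by $\Hom_{\O_K}(\ol Q_1(S),L_1(M)/L_1(T))$, so there are $(h_2\cdots h_l)^{r_1}$ of them. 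For a fixed such $K_1(T)$, the $T$ realising it are exactly the $P_l(\O_K)$-complements of $K_1(M)/K_1(T)$ in $M'/K_1(T)$; as $K_1(M)/K_1(T)$ carries the trivial $y$-action and $\ol T$ is standard, these are parametrised by $\Hom_{\O_K}(\ol T/\ol Ty,\,K_1(M)/K_1(T))$, of which there are $(h_1\cdots h_l)^{r_2+\cdots+r_l}$ since $\rank(\ol T/\ol Ty)=r_2+\cdots+r_l$. Because $(h_2\cdots h_l)^{r_1}(h_1\cdots h_l)^{r_2+\cdots+r_l}=(h_1\cdots h_l)^{r_1+\cdots+r_l}/h_1^{r_1}$, the count follows. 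The main obstacle is the non-emptiness implicit in the last step: one must show that for every admissible $K_1(T)$ the extension $0\to K_1(M)/K_1(T)\to M'/K_1(T)\to\ol T\to 0$ of $P_l(\O_K)$-modules splits — such extensions need not split for non-standard submodules, so standardness is used essentially here. I expect this, and the uniformity of the fibre, to follow either from the structure theory of standard $P_l(\O_K)$-modules established earlier, or from showing that $\Aut_{P_l(\O_K)}(M')$ acts transitively on the admissible data, reducing to $K_1(T)=K_1(S)$, where $S/K_1(S)$ is visibly a complement in $M'/K_1(S)$; the auxiliary identifications $Q_j(T)\cong Q_{j-1}(\ol T)$ likewise require some care.
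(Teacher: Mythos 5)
Your arguments for (a) and (b) are correct. Part (a) is essentially the paper's argument: the key identity $K_i(S)\cap L_i(M)=L_i(S)$, obtained by clearing denominators with $N M\subseteq S$ and using that $Q_i(S)$ is torsion free. Part (b) takes a genuinely different route: instead of factorising $[M:S]$ through the refined series $(\Sigma)$ of the submodules $I_{i,j}(M)$ as in Theorem \ref{goodexist}(b), you use the chain $K_{i-1}(M)+S\subseteq K_i(M)+S$ and the recursion $[K_i(M):K_{i-1}(M)+K_i(S)]=h_i\,[K_{i+1}(M):K_i(M)+K_{i+1}(S)]$; your crucial identity $\{x\in K_{i+1}(M):xy\in K_{i-1}(M)+K_i(S)\}=K_i(M)+K_{i+1}(S)$ checks out (the interpolating subgroup should literally be $L_i(M)+K_i(S)$, a harmless rewording), so this is a valid and rather explicit alternative to the paper's computation.

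Part (c) contains a genuine gap, which you flag yourself: in the fibre count you need that \emph{every} admissible complement $C$ (your candidate $K_1(T)$) actually occurs, i.e.\ that the extension of $P_l(\O_K)$-modules $K_1(M)/C\hookrightarrow M'/C\twoheadrightarrow\ol{T}$ splits, and moreover that each resulting complement $T/C$ is a standard submodule associated with $(R_1,\dots,R_l)$ (this is where your deferred identifications $Q_j(T)\cong Q_{j-1}(\ol{T})$ and the ``one verifies'' claims setting up the induction enter). Without these, the torsor argument gives only an upper bound $(h_1\cdots h_l)^{r_2+\cdots+r_l}$ per fibre, not the exact count. The facts are true, and the splitting can be proved in the spirit of your own observation that $L_1(T)=K_2(T)y$ depends only on $\ol{T}$: if $\bar f$ is a standard generator of $\ol{T}$ killed by $y^{j-1}$ and $f\in M'$ is any lift, then $fy^{j-1}=(fy^{j-2})y$ is independent of the lift and lies in $y$ applied to the preimage of $K_1(\ol{T})$, hence in $L_1(T)\subseteq C$, so a splitting can be defined on the free generators of $\ol{T}\cong\bigoplus_j P_{j-1}(\O_K)^{r_j}$. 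The paper avoids the issue by a constructive description (first paragraph of the proof of Theorem \ref{goodcount}, resting on Theorem \ref{standnf}): the standard submodules associated with $(R_1,\dots,R_l)$ are exactly the modules generated by arbitrary preimages $\FF_i\subseteq K_i(M)$ of free $\O_K$-bases of the $R_i$, so existence is automatic and the count reduces to counting choices of preimages, organised by $T+L_1(M)$ and then $T\cap L_1(M)$ (its Parts 1--3) rather than by $K_1(T)$ and then $T$ as you do. As written, your proposal leaves this central existence step, and the auxiliary compatibilities of the induction, unproved.
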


Our proof, 
given in Theorems \ref{goodexist} and 
\ref{goodcount},
is constructive and translates to practical algorithms to construct
one or all standard submodules of minimal index in an integral
$P_l(\O_K)$-module.
We use our methods to obtain practical algorithms to solve the 
isomorphism and automorphism group problems for these modules, 
and, in turn, also
to solve the integral conjugacy and centraliser problems.

In Section \ref{transl} we briefly recall the translation of the integral
conjugacy and centraliser problems to the isomorphism and automorphism
group problems for integral $P_l(\O_K)$-modules. 
In Section \ref{max-orders} we present a new algorithm to construct (one or 
all) standard submodules of smallest index in an integral $P_l(\O_K)$-module,
and so solve the isomorphism and automorphism 
group problems for these modules. 
In Sections 4 and 5 we give pseudo-code outlines of our algorithms. In 
Section 6 we comment on our implementations of these algorithms, available 
in {\sc Magma} \cite{MAGMA}, and illustrate their performance
and limitations. Finally we discuss variations of our 
methods and open problems.
Where appropriate, we cite corresponding statements from \cite{Gru80}. 
In most cases, we also include independent proofs 
to ensure that our work is reasonably self-contained and the language 
employed is consistent. 

We conclude the introduction by mentioning related work.
Latimer and MacDuffee \cite{LMa1933} 
solved the integral conjugacy problem for matrices with 
irreducible characteristic polynomial; 
Husert \cite{Hus16} solved it 
for nilpotent and semisimple matrices;
Marseglia~\cite{Mars2018} solved it 
for matrices with squarefree characteristic polynomial. 
Opgenorth, Plesken and Schulz 
\cite{OPSc1998} developed algorithms to solve both 
problems for matrices of finite order. 
Sarkisjan \cite{Sar79} exhibited a method to solve the simultaneous integral 
conjugacy problem for lists of matrices. 
Karpenkov \cite{Kar13} described the set of conjugacy classes in $\SL(n, \Z)$.

\section{Translation to modules over truncated polynomial rings}
\label{transl} 

We now describe how to reduce the integral conjugacy problem and the 
centraliser problem to the isomorphism and automorphism group problems 
for a certain type of module defined over the maximal order of an
algebraic number field. 

\subsection{Reduction to integral matrices}

Recall that $S \in M_n(\Q)$ is semisimple if the natural 
$\Q S$-module $V = \Q^n$ is a direct sum of irreducible $\Q S$-modules,
where $\Q S$ denotes the $\Q$-subalgebra of $M_n(\Q)$ generated by $S$. 
A matrix $U$ in $M_n(\Q)$ is nilpotent if there exists $l \in \N$
with $U^l = 0$. By the Jordan-Chevalley decomposition, 
for every $T \in \GL(n,\Q)$ there exist unique $S \in 
\GL(n,\Q)$ and $U \in M_n(\Q)$ such that $S$ is semisimple, 
$U$ is nilpotent, $T = S + U$, and $SU=US$.  

Although the input matrices 
to the integral conjugacy 
problem are rational, we can readily reduce to integral matrices.

\begin{lemma}\cite[Lemma 1]{Gru80}
\label{integral}
Let $T, \hat{T} \in \GL(n,\Q)$ with Jordan-Chevalley decompositions 
$T = S+U$ and $\hat{T} = \hat{S} + \hat{U}$. 
Choose $k \in \N$ so that $kS, kU, k\hat{S},$ and $k\hat{U}$ are integral
matrices.
\begin{items}
\item[\rm (a)]
$kT = kS + kU$ 
and $k\hat{T} = k\hat{S} + k\hat{U}$ are Jordan-Chevalley decompositions. 
\item[\rm (b)]
$T$ and $\hat{T}$ are conjugate in $\GL(n,\Z)$ if and only
if $kT$ and $k \hat{T}$ are conjugate in $\GL(n,\Z)$. 
\item[\rm (c)]
$C_\Z(T) = C_\Z(kT)$. 
\end{items}
\end{lemma}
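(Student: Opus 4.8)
The plan is to prove each part by unwinding the Jordan--Chevalley decomposition and using the fact that scaling by a central rational scalar $k$ commutes with everything in sight.

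For part (a), I would first recall that the Jordan--Chevalley decomposition $T = S + U$ is characterised by: $S$ semisimple, $U$ nilpotent, $SU = US$, and $T = S+U$; moreover $S$ and $U$ are polynomials in $T$ with rational coefficients. Multiplying by the scalar $k \in \N$, I note that $kS$ is semisimple (its minimal polynomial is still squarefree over $\Q$, since $kS$ and $S$ generate the same $\Q$-subalgebra and scaling an operator by a nonzero scalar preserves semisimplicity), $kU$ is nilpotent (indeed $(kU)^l = k^l U^l$), they commute since $(kS)(kU) = k^2 SU = k^2 US = (kU)(kS)$, and $kT = kS + kU$. By the uniqueness in the Jordan--Chevalley decomposition, this is \emph{the} decomposition of $kT$. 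The same argument applies to $\hat{T}$.

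For parts (b) and (c) the key observation is that conjugation is insensitive to central scaling: for any $X \in \GL(n,\Z)$ and any nonzero $k \in \Q$ we have $X(kT)X^{-1} = k(XTX^{-1})$, because $kI$ is central in $M_n(\Q)$. Hence $XTX^{-1} = \hat{T}$ holds if and only if $X(kT)X^{-1} = k\hat{T}$ holds, which gives (b) immediately (note $kT, k\hat{T} \in \GL(n,\Q)$ since $k \neq 0$, so the statement is meaningful; if one wants $kT$ integral that is part of the hypothesis on $k$). Taking $\hat{T} = T$ in this equivalence yields that the defining condition $XTX^{-1} = T$ for membership in $C_\Z(T)$ is equivalent to $X(kT)X^{-1} = kT$, i.e.\ membership in $C_\Z(kT)$; since both are subsets of $\GL(n,\Z)$, they are equal, proving (c).

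None of the steps presents a genuine obstacle; the only point requiring a moment's care is the assertion in (a) that $kS$ is again semisimple, where I would lean on the fact that semisimplicity of $S$ over $\Q$ is equivalent to the minimal polynomial of $S$ being squarefree, and that the minimal polynomial of $kS$ is obtained from that of $S$ by the substitution $x \mapsto x/k$ followed by clearing denominators, which preserves squarefreeness. Everything else is a direct manipulation of scalars, so the proof is short.
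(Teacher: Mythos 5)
Your proof is correct; the paper itself gives no proof of this lemma, simply citing \cite[Lemma 1]{Gru80}, and your argument (uniqueness of the Jordan--Chevalley decomposition plus the fact that scaling by the central scalar $k$ preserves semisimplicity, nilpotency and commutation, and commutes with conjugation) is exactly the standard one intended there. Nothing further is needed.
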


Our first step in solving the integral conjugacy and centraliser problems 
for matrices $T, \hat{T} 
\in \GL(n,\Q)$ is to choose $k \in \N$ as in Lemma~\ref{integral} and to 
replace $T, \hat{T}$ by $kT, k\hat{T}$ respectively. Now $kT$ and  
$k\hat{T}$ have {\em integral Jordan-Chevalley decompositions}. 

\subsection{Translation to modules} 

Let $T, \hat{T} \in \GL(n,\Q)$ have integral Jordan-Chevalley decompositions
$T = S+U$ and $\hat{T} = \hat{S} + \hat{U}$. Choose $l \in \N$ minimal
with $U^l = 0$ and let $P(x)$ be the minimal polynomial of $S$. 
We may assume that $P(x)$ is also the minimal polynomial
of $\hat{S}$ and that $\hat{U}^l = 0$: otherwise $T$ and $\hat{T}$ 
are not conjugate in $\GL(n,\Q)$ and so not in $\GL(n,\Z)$.

Let $R = \Z[x]/(P(x))$ and $P_l(R) = R[y]/(y^l) = \Z[x,y]/(P(x),y^l)$, so
that $P_l(R)$ is a truncated polynomial ring over the commutative ring $R$.
Hence $P_l(R) = \{ r_0 y^0 + \dots + r_{l-1} y^{l-1} + (y^l) \mid 
r_i \in R \}$. We embed $R$ into $P_l(R)$ via $r \ms r y^0 + (y^l)$.

A $P_l(R)$-module $M$ is {\em integral} if $M \cong \Z^n$ 
for some $n \in \N$ as abelian group. In this case we fix a $\Z$-basis
in $M$ and identify $M$ with $\Z^n$ and hence $\Aut(M)$ with $\GL(n,\Z)$
and $\End(M)$ with $M_n(\Z)$. 

\begin{lemma}\cite[Lemma 3]{Gru80} 
\label{translate}
Let $M = \Z^n = \hat{M}$ as abelian groups.
\begin{enumerate}
\item[(a)]
$T$ induces the structure of a $P_l(R)$-module on $M$ via $vx = vS$
and $vy = vU$ for $v \in M$. Similarly, $\hat{M}$ is a $P_l(R)$-module 
for $\hat{T}$.
\item[(b)]
$T$ and $\hat{T}$ are conjugate in $\GL(n,\Z)$ if and only if
$M$ and $\hat{M}$ are isomorphic as $P_l(R)$-modules.
\item[(c)]
$C_\Z(T) = \Aut_{P_l(R)}(M) = \{ X \in \GL(n,\Z) \mid X\ol{a} = \ol{a}X 
\mbox{ for } a \in P_l(R) \}$, where $P_l(R) \ra M_n(\Z) : a \ms 
\ol{a}$ is the action of $P_l(R)$ on $M = \Z^n$.
\end{enumerate}
\end{lemma}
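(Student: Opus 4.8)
The plan is to unwind definitions in each part, with the uniqueness of the Jordan-Chevalley decomposition over $\Q$ as the one substantive ingredient. For (a), I would note that $x \ms S$, $y \ms U$ extends to a ring homomorphism $\Z[x,y] \ra M_n(\Z)$ because $\Z[x,y]$ is the free commutative ring on $x, y$ while $S$ and $U$ commute by hypothesis; its kernel contains $P(x)$ (as $P$ is the minimal polynomial of $S$) and $y^l$ (as $U^l = 0$), hence contains the ideal $(P(x), y^l)$, so the map factors through $P_l(R) = \Z[x,y]/(P(x), y^l)$. Composing with the right action of $M_n(\Z)$ on row vectors gives the $P_l(R)$-module structure on $M = \Z^n$ with $vx = vS$ and $vy = vU$. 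Since $P(x)$ is assumed to also be the minimal polynomial of $\hat S$ and $\hat U^l = 0$, the same argument applies to $\hat M$.

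The tool for (b) and (c) is the observation that, because $S$ is semisimple, $U$ nilpotent, and $SU = US$, the equality $T = S+U$ is the (unique) Jordan-Chevalley decomposition of $T$ over $\Q$; and for any $X \in \GL(n,\Q)$ the matrices $XSX^{-1}$ (semisimple) and $XUX^{-1}$ (nilpotent) commute and sum to $XTX^{-1}$, so they are the Jordan-Chevalley components of $XTX^{-1}$. By uniqueness, $XTX^{-1} = \hat T$ is therefore equivalent to the pair of equations $XSX^{-1} = \hat S$ and $XUX^{-1} = \hat U$, and $XTX^{-1} = T$ is equivalent to $XS = SX$ together with $XU = UX$. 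For (b): any $P_l(R)$-module isomorphism $M \ra \hat M$ is in particular a group automorphism of $\Z^n$, so has the form $v \ms vY$ with $Y \in \GL(n,\Z)$; $P_l(R)$-linearity evaluated on $x$ and $y$ reads $SY = Y\hat S$ and $UY = Y\hat U$, i.e.\ $Y^{-1}$ conjugates $S$ to $\hat S$ and $U$ to $\hat U$, hence $T$ to $\hat T$. Conversely, if $X \in \GL(n,\Z)$ satisfies $XTX^{-1} = \hat T$, one checks directly, using $XSX^{-1} = \hat S$ and $XUX^{-1} = \hat U$, that $v \ms vX^{-1}$ is a $P_l(R)$-module isomorphism $M \ra \hat M$.

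For (c), under the identification $\Aut(M) = \GL(n,\Z)$ in which $X$ acts on $M = \Z^n$ by $v \ms vX$, the element $X$ lies in $\Aut_{P_l(R)}(M)$ exactly when $v\ol a\,X = vX\ol a$ for all $v \in \Z^n$ and $a \in P_l(R)$, i.e.\ when $X\ol a = \ol a X$ for all $a \in P_l(R)$; since $P_l(R)$ is generated as a ring by $x$ and $y$, this holds iff $X$ commutes with $\ol x = S$ and $\ol y = U$, which by the observation above is equivalent to $XTX^{-1} = T$, i.e.\ $X \in C_\Z(T)$. This yields both the equality $C_\Z(T) = \Aut_{P_l(R)}(M)$ and the commutant description on the right-hand side of (c).

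I do not anticipate a genuine obstacle here; the proof is bookkeeping around the right-module convention plus one careful invocation of uniqueness of the Jordan-Chevalley decomposition. The two points that need attention are the placement of inverses in the correspondence between conjugating matrices and module homomorphisms in (b), which must be kept consistent with the convention $vx = vS$, and the reduction in (c) from $X$ centralising $T$ to $X$ centralising both $S$ and $U$; the latter uses semisimplicity of $S$ essentially (it fails for a general invertible matrix) and is exactly where the uniqueness statement enters.
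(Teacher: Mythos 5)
Your proof is correct. The paper itself gives no argument for this lemma, citing \cite[Lemma 3]{Gru80} instead, and your write-up supplies exactly the standard proof that citation stands for: part (a) is the universal property of $\Z[x,y]/(P(x),y^l)$ applied to the commuting pair $(S,U)$, and parts (b) and (c) correctly hinge on the uniqueness of the Jordan--Chevalley decomposition to pass between conjugating (resp.\ centralising) $T$ and simultaneously conjugating (resp.\ centralising) $S$ and $U$, with the inverse placements consistent with the right-action convention $vx=vS$, $vy=vU$.
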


It remains to solve the isomorphism and automorphism group problems
for integral $P_l(R)$-modules. 

Let $P(x) = P_1(x) \cdots P_r(x)$ be the factorisation of $P(x)$ into
irreducible polynomials over $\Q[x]$. Since $S$ is semisimple and integral, 
$P_1(x), \dots, P_r(x)$ are pairwise distinct, monic, and integral.
For $1 \leq i \leq r$ let $R_i = \Z[x]/(P_i(x))$, let $p_i(x) = P(x)/P_i(x)
\in \Z[x]$, let $M_i$ be the image of $p_i(S)$ in $M$ and $\hat{M}_i$ the
image of $p_i(\hat{S})$ in $\hat{M}$. 
Now $M_i$ and $\hat{M}_i$ are both 
$P_l(R)$- and $P_l(R_i)$-modules.
Write $D = M_1 + \dots + M_r \leq M$ and 
$\hat{D} = \hat{M}_1 + \dots + \hat{M}_r \leq \hat{M}$.  By construction,
$D$ is a subgroup of finite index, say $d$, in $M = \Z^n$ and, similarly, 
$\hat{D}$ is a subgroup of finite index, say $\hat{d}$, in $\hat{M} = \Z^n$. 
Further, $D$ and $\hat{D}$ are $P_l(R)$-submodules of $M$ and $\hat{M}$ 
respectively. Choose $c \in \N$ such that $cM \leq D$; 
one option is $c=d$. 

\begin{theorem} \cite[Lemma 8]{Gru80}
\label{module}
Let $T, \hat{T} \in \GL(n,\Q)$ have integral Jordan-Chevalley decompositions.
\begin{enumerate}
\item[(a)] 
The following are equivalent.
\begin{enumerate}
\item[(i)]
$T$ and $\hat{T}$ are conjugate in $\GL(n,\Z)$.
\item[(ii)]
$M$ and $\hat{M}$ are isomorphic as $P_l(R)$-modules and $d = \hat{d}$.
\item[(iii)]
There exists a $P_l(R)$-module isomorphism $\gamma : D \ra \hat{D}$
with $\gamma(cM) = c \hat{M}$.
\end{enumerate}
\item[(b)]
$C_\Z(T) = \Aut_{P_l(R)}(M)$ is isomorphic to the stabiliser of 
$cM$ in $\Aut_{P_l(R)}(D)$.
\end{enumerate}
\end{theorem}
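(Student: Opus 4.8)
The plan is to deduce everything from Lemma~\ref{translate} together with two elementary facts: the submodules $M_i$, and hence $D$, are \emph{intrinsic} (preserved by every $P_l(R)$-module homomorphism), and multiplication by the central integer $c$ is injective on the torsion-free modules in play. First I would record that $M_i = M\,p_i(x)$, where $p_i(x)$ is regarded as an element of $P_l(R)$ via $R \hookrightarrow P_l(R)$; likewise $\hat M_i = \hat M\,p_i(x)$, and $D = M_1 \oplus \dots \oplus M_r$, $\hat D = \hat M_1 \oplus \dots \oplus \hat M_r$, the sums being direct because the $p_i(x)$ cut out the distinct primary components of $\Q^n$ over $\Q[x]/(P(x))$; in particular $D$ has finite index in $M = \Z^n$, so $D$ is again an integral $P_l(R)$-module and $\Aut_{P_l(R)}(D) \le \GL(n,\Z)$. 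Consequently every $P_l(R)$-module homomorphism $\varphi\colon M \to \hat M$ satisfies $\varphi(M_i) \subseteq \hat M_i$, hence $\varphi(D) \subseteq \hat D$; an isomorphism $\varphi$ restricts to an isomorphism $D \to \hat D$, which forces $d = [M:D] = [\hat M:\hat D] = \hat d$. With Lemma~\ref{translate}(b) this gives (i)$\,\Leftrightarrow\,$(ii), the equality $d = \hat d$ being automatic once $M \cong \hat M$. For (ii)$\,\Rightarrow\,$(iii) I would take $\gamma = \varphi|_D$; then $\gamma\colon D \to \hat D$ is an isomorphism and $\gamma(cM) = c\,\varphi(M) = c\hat M$ since $c$ is central (here I use that $c$ is chosen with $c\hat M \le \hat D$ as well, which holds once $d = \hat d$, e.g.\ for $c = d$).

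For (iii)$\,\Rightarrow\,$(i) I would extend $\gamma$ to $M$. As $M \cong \Z^n$ is torsion-free, multiplication by $c$ is an isomorphism $\mu_M\colon M \to cM$, and likewise $\mu_{\hat M}\colon \hat M \to c\hat M$; because $\gamma(cM) = c\hat M$ \emph{with equality}, the composite $\psi := \mu_{\hat M}^{-1}\circ (\gamma|_{cM}) \circ \mu_M$ is a well-defined bijection $M \to \hat M$ that restricts to $\gamma$ on $D$. Since $c \in \Z$ is central in $P_l(R)$, the maps $\mu_M$, $\mu_{\hat M}$ are $P_l(R)$-homomorphisms, so $\psi$ is a $P_l(R)$-isomorphism, and (i) follows from Lemma~\ref{translate}(b).

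For part (b) the same observations show that $\varphi \mapsto \varphi|_D$ maps $\Aut_{P_l(R)}(M) = C_\Z(T)$ into $\Aut_{P_l(R)}(D)$, and since $\varphi(cM) = c\,\varphi(M) = cM$ it lands in $\Stab(cM, \Aut_{P_l(R)}(D))$; this restriction map is plainly a group homomorphism. It is injective, because $\varphi|_D = \id_D$ forces $\varphi$ to fix $cM \le D$ pointwise, whence $c(\varphi(m)-m) = 0$ and so $\varphi(m) = m$ for all $m$ by torsion-freeness. It is surjective: given $\gamma \in \Stab(cM, \Aut_{P_l(R)}(D))$, the extension construction above with $\hat M = M$ produces $\psi \in \Aut_{P_l(R)}(M)$ with $\psi|_D = \gamma$. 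Hence restriction is the claimed isomorphism.

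I expect the only real care to lie in the bookkeeping around $c$: arranging that a single integer $c$ serves both $M$ and $\hat M$ (so that (iii) is even well-posed), and insisting on the \emph{equality} $\gamma(cM) = c\hat M$ rather than mere containment, since precisely this permits $\gamma$ to be transported from $D$ to all of $M$. The remaining steps are formal given Lemma~\ref{translate}; where the argument overlaps \cite[Lemma 8]{Gru80} I would follow Grunewald's line, recast in terms of these intrinsic submodules.
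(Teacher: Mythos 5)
Your proposal is correct and follows essentially the same route as the paper: you restrict an isomorphism $M \to \hat M$ to the fully invariant submodule $D$ for (ii)$\Rightarrow$(iii), and your map $\psi = \mu_{\hat M}^{-1}\circ(\gamma|_{cM})\circ\mu_M$ is exactly the paper's extension $w \mapsto c^{-1}\gamma(cw)$ used for (iii)$\Rightarrow$(i) and for surjectivity in (b). The extra bookkeeping you flag around $c$ is harmless but not needed, since $\gamma(cM)=c\hat M$ already forces $c\hat M \subseteq \hat D$.
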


\begin{proof}
\leavevmode
\begin{enumerate}
\item[(a)]
Translating notation shows that (i) is equivalent to (ii). 
We now prove that (ii) is equivalent to (iii). 
If $\gamma : M \ra \hat{M}$ is an isomorphism of  
$P_l(R)$-modules, then its restriction $\gamma : D \ra \hat{D}$ is
also an isomorphism of $P_l(R)$-modules, as $D$ and $\hat{D}$ are both
fully invariant submodules. Similarly, $\gamma(cM) = c \gamma(M) = c \hat{M}$. 
It remains to show the converse. Let $\gamma : D \ra \hat{D}$ be an
isomorphism of $P_l(R)$-modules with $\gamma(cM) = c \hat{M}$. 
Now 
\[ \delta : M \ra \hat{M} : w \ms c^{-1} \gamma(cw)\]
is also an isomorphism of $P_l(R)$-modules.
\item[(b)]
Translating notation establishes that $C_\Z(T) = \Aut_{P_l(R)}(M)$. 
We now establish the isomorphism. Let 
\[ \varphi : \Aut_{P_l(R)}(M) \ra \Aut_{P_l(R)}(D)\]
be induced by restriction. This is well-defined, since $D$ 
is a fully invariant submodule by construction. Note that $\varphi$ is
injective, since $D$ has finite index in $M$ and contains $cM$.
Each automorphism of $M$ leaves $c M$ invariant; 
thus  ${\rm im}(\varphi)$ is contained
in the stabiliser $S$ of $cM$ in $\Aut_{P_l(R)}(D)$.
It remains to show that ${\rm im}(\varphi) = S$.
Let $\gamma \in S$ and let $w \in M$. Define
\[ \delta : M \ra M : w \ms c^{-1} \gamma(cw).\]
Observe $\delta \in \Aut_{P_l(R)}(M)$ and $\varphi(\delta) = \gamma$.
Thus $\varphi$ is surjective. \qedhere
\end{enumerate}
\end{proof}

Theorem \ref{module} reduces the integral conjugacy and centraliser 
problems for $T$ and $\hat{T}$ to the construction of generators for 
$A = \Aut_{P_l(R)}(D)$ and the determination of an arbitrary $P_l(R)$-module 
isomorphism $\phi : D \ra \hat{D}$.  If $A$ and $\phi$ are given, then
the orbit $O$ of $cM$ under $A$ can be constructed:  $O$ is finite since 
$cM$ has finite index in $M$. 
An isomorphism $\gamma$ as in Theorem \ref{module}(a)(iii) exists 
if and only if $c \hat{M}$ is contained in 
$\{ \phi(o) \mid o \in O\}$. Generators for $\Stab_A(cM)$ as
required in Theorem~\ref{module}(b) can be constructed from 
$A$ via Schreier generators, see \cite[Section 4.1]{HEO05}.

\begin{theorem}\cite[Lemma 7]{Gru80}
\label{dirsplit}
\mbox{}
\begin{enumerate}
\item[\rm (a)]
$D \cong \hat{D}$ as $P_l(R)$-modules if and only if $M_i \cong \hat{M}_i$
as $P_l(R_i)$-modules for $1 \leq i \leq r$.
\item[\rm (b)]
$\Aut_{P_l(R)}(D) = \Aut_{P_l(R_1)}(M_1) \times \dots \times 
\Aut_{P_l(R_r)}(M_r)$.
\end{enumerate}
\end{theorem}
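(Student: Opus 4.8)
The plan is to realise $D$ as an internal direct sum $D = M_1 \oplus \cdots \oplus M_r$ of $P_l(R)$-submodules, on each of which $P_l(R)$ acts through the natural surjection $P_l(R) \ra P_l(R_i)$, and then to show that \emph{every} $P_l(R)$-module homomorphism $D \ra \hat{D}$ respects this decomposition. Once that is done, both statements become block-diagonal bookkeeping.

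First I would set up the rational picture. Since $P_1, \dots, P_r$ are pairwise coprime in $\Q[x]$, the Chinese Remainder Theorem gives a ring isomorphism $\Q[x]/(P(x)) \cong \prod_{i=1}^r \Q[x]/(P_i(x))$; let $e_1, \dots, e_r$ be the corresponding orthogonal central idempotents, so that $e_i$ agrees, up to multiplication by a unit supported on the $i$-th factor, with the image of $p_i(x)$. Because $D$ has finite index in $M$, tensoring with $\Q$ yields $\Q \otimes D = \Q \otimes M = \bigoplus_i e_i(\Q \otimes M)$, and from $M_i = p_i(S)M$ one gets $\Q \otimes M_i = p_i(x)(\Q \otimes M) = e_i(\Q \otimes M)$. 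As $D \subseteq M$ is torsion-free and $D = M_1 + \cdots + M_r$ becomes the direct sum $\bigoplus_i \Q \otimes M_i$ after extension of scalars, the sum $D = M_1 \oplus \cdots \oplus M_r$ is already direct over $\Z$. Moreover each $M_i$ is annihilated by $P_i(S)$, since $P_i(S)p_i(S) = P(S) = 0$ on $M$, and is stabilised by $U$, since $U$ commutes with $p_i(S)$; hence $M_i$ is a $P_l(R_i)$-module and the $P_l(R)$-action on it factors through $P_l(R) \ra P_l(R_i)$. The same applies to $\hat{D} = \hat{M}_1 \oplus \cdots \oplus \hat{M}_r$.

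Next I would prove that $\Hom_{P_l(R)}(D, \hat{D})$ is the product of the $\Hom_{P_l(R_i)}(M_i, \hat{M}_i)$. Given $\phi \in \Hom_{P_l(R)}(D, \hat{D})$, extend it $\Q$-linearly to $\Q \otimes D \ra \Q \otimes \hat{D}$; as $\phi$ is $R$-linear and the $e_i$ are central in $\Q \otimes R$, it commutes with each $e_i$, so $\phi(\Q \otimes M_i) \subseteq \Q \otimes \hat{M}_i$ and therefore $\phi(M_i) \subseteq \hat{D} \cap (\Q \otimes \hat{M}_i) = \hat{M}_i$. Thus $\phi = \bigoplus_i \phi_i$ with $\phi_i : M_i \ra \hat{M}_i$ a $P_l(R)$-module homomorphism, equivalently a $P_l(R_i)$-module homomorphism because the action factors through the surjection $P_l(R) \ra P_l(R_i)$; conversely any tuple $(\phi_i)_i$ assembles to such a $\phi$, and the two assignments are mutually inverse, and are ring isomorphisms when $\hat{D} = D$. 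Part (b) now follows by passing to unit groups: $\Aut_{P_l(R)}(D) = \End_{P_l(R)}(D)^\times = \prod_i \End_{P_l(R_i)}(M_i)^\times = \prod_i \Aut_{P_l(R_i)}(M_i)$. For part (a), a homomorphism $\phi = \bigoplus_i \phi_i$ is an isomorphism if and only if each $\phi_i$ is one, since injectivity and surjectivity are checked componentwise on a direct sum; this gives both implications, and there is no danger of $M_i$ being matched with $\hat{M}_j$ for $j \neq i$ precisely because $\phi$ commutes with the \emph{specific} idempotent $e_i$.

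The CRT bookkeeping and the verification that $U$ stabilises each $M_i$ are routine; the one step that I expect to need genuine care, and which I regard as the main obstacle, is the pair of identities $e_i(\Q \otimes M) = \Q \otimes M_i$ and $\hat{D} \cap (\Q \otimes \hat{M}_i) = \hat{M}_i$ — that is, the assertion that the integral submodules $M_i$ and $\hat{M}_i$ are exactly the traces on $D$ and $\hat{D}$ of the rational isotypic components. I would settle this by writing $e_i = a_i(x)p_i(x)$ in $\Q[x]/(P(x))$ with $a_i(x)$ a unit on the $i$-th factor and using $p_i(S)M = M_i$ directly, together with the orthogonality of the $e_i$.
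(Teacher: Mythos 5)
Your proof is correct, and its skeleton --- realise $D$ and $\hat D$ as internal direct sums of the $M_i$ and $\hat M_i$, note that each summand is a $P_l(R_i)$-module, and then do blockwise bookkeeping --- is the same as the paper's. The difference is in how the two ingredients are verified. The paper proves only the direct-sum property, and does so integrally: $M_i = \im(p_i(S)) \le \ker(P_i(S))$, and coprimality of the distinct irreducible factors $P_i$ forces the intersections to vanish (torsion-freeness of $M$ is what lets the rational B\'ezout identity be applied); the functorial consequence that every $P_l(R)$-homomorphism $D \ra \hat D$ carries $M_i$ into $\hat M_i$ is left as ``follows readily''. You instead pass to $\Q \otimes M$ and use the CRT idempotents $e_i$, which buys both facts at once: directness, because $\Q \otimes M_i = e_i(\Q \otimes M)$ and the $e_i$ are orthogonal; and canonicity, because after extension of scalars any $P_l(R)$-homomorphism intertwines the action of $\Q[x]/(P(x))$ and hence commutes with each $e_i$, so it cannot mix components --- exactly the point needed for the ``only if'' direction of (a) and for the equality in (b), which the paper does not spell out. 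Your flagged step $\hat D \cap (\Q \otimes \hat M_i) = \hat M_i$ does go through as you sketch: write $d = \sum_j \hat m_j$ with $\hat m_j \in \hat M_j$, apply $e_i$, and use $e_i e_j = 0$ for $j \neq i$ together with $e_i d = d$. In short, your argument is a more explicit (rational-idempotent) version of the paper's shorter integral one, and it makes precise the canonicity that the paper uses silently.
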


\begin{proof}
Both statements follow readily from the fact that $D$ and $\hat{D}$
are direct sums of $M_1, \ldots, M_r$ and $\hat{M}_1, \ldots, \hat{M}_r$,
respectively.  It remains to prove this direct sum property. 
Observe that $M_i = \im(p_i(S)) \leq \ker(P_i(S))$, 
since $p_i(x) P_i(x) = P(x)$ and
$P(S) = 0$. 
Consider $m \in M_i \cap M_j$ where $i \not=j$. Now 
$m \in \im(p_i(S)) \cap \ker(P_j(S))$. As $i \neq j$, it follows that 
$P_j(x) \mid p_i (x)$ and hence $m = 0$.
\end{proof}

Thus a solution to the isomorphism and automorphism group problems for 
the $P_l(R)$ modules $M$ and $\hat{M}$ reduces to the same 
problems for the $P_l(R_i)$-modules $M_i$ and $\hat{M}_i$
 for $1 \leq i \leq r$.
Since $P_i(x)$ is irreducible, 
$R_i$ is an order in the algebraic number field $\Q[x]/(P_i(x))$.

\subsection{Reduction to modules over maximal orders}
\label{maxord}

Let $K$ be an algebraic number field with maximal order $\O_K$, let
$\O$ be an arbitrary order in $\O_K$, and  
let $l \in \N$.  
Let $M$ be an integral $P_l(\O)$-module, so $M = \Z^n$ for some
$n \in \N$. We embed $M$ in the vector space $V = \Q^n$. 
Thus $\O$ acts on $V$ and induces an action by its quotient field
$K$ on $V$. Hence the maximal order $\O_K$ acts on $V$. 
Note that $M$ need not  
be closed under the action of $\O_K$, but it has subgroups 
which are, and we call these $P_l(\O_K)$-submodules of $M$. 

\begin{lemma}
\label{unique}
$M$ has a unique maximal $P_l(\O_K)$-submodule of finite index.
\end{lemma}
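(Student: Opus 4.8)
The plan is to show that the sum of two $P_l(\O_K)$-submodules of finite index is again a $P_l(\O_K)$-submodule of finite index; the unique maximal one is then the sum of all of them, or equivalently of a large enough finite collection. First I would fix the ambient picture: embed $M$ in $V = \Q^n$, so that $\O_K$ and hence $P_l(\O_K)$ act on $V$, and recall that a \emph{$P_l(\O_K)$-submodule of $M$} means a subgroup $N \leq M$ that is closed under this action. Since $M$ has finite index in $M \otimes_\Z \Q$-lattices only in the obvious sense, the key finiteness input is that $\O_K$ is a finitely generated $\Z$-module (indeed free of rank $[K:\Q]$), so $P_l(\O_K)$ is finitely generated as a $\Z$-module; fix generators $a_1, \dots, a_s$ of $P_l(\O_K)$ over $\Z$.

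Next I would check the two closure facts. \emph{Closure under sums:} if $N_1, N_2 \leq M$ are $P_l(\O_K)$-submodules, then $N_1 + N_2$ is a subgroup of $M$, and for each generator $a_j$ we have $(N_1 + N_2)a_j \subseteq N_1 a_j + N_2 a_j \subseteq N_1 + N_2$, so $N_1 + N_2$ is again a $P_l(\O_K)$-submodule; and if $N_1$ has finite index in $M$ then so does $N_1 + N_2 \supseteq N_1$. \emph{Existence of at least one such submodule:} this is where a little care is needed. Since $M$ is finitely generated over $\Z$, say by $v_1, \dots, v_n$, the subgroup $N_0$ generated by all $v_i a_j$ (equivalently, $N_0 = \sum_i v_i P_l(\O_K)$, the $P_l(\O_K)$-submodule of $V$ generated by a $\Z$-basis of $M$) is a finitely generated $\Z$-submodule of $V$, hence a lattice of full rank $n$ (it contains $M$), and it is by construction closed under $P_l(\O_K)$. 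So $N_0$ is a $P_l(\O_K)$-submodule of $V$ containing $M$ with $M$ of finite index in $N_0$; intersecting, $M \cap N_0 = M$ — that's not quite what we want, so instead I note $cM \subseteq M$ is a $P_l(\O_K)$-submodule of $M$ for a suitable $c$: indeed $cN_0 \subseteq M$ for some $c \in \N$ (clear denominators of the finitely many generators of $N_0$ over $M$), and $cN_0$ is a $P_l(\O_K)$-submodule of finite index in $M$. Thus the set $\SS$ of $P_l(\O_K)$-submodules of finite index in $M$ is nonempty.

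Finally, I would produce the maximal element. For $N \in \SS$ the index $[M:N]$ is a positive integer, and the subgroups of $M \cong \Z^n$ of index dividing a fixed bound are finite in number; but a priori indices in $\SS$ need not be bounded. The fix: let $N^\ast = \sum_{N \in \SS} N$, a subgroup of $M$ containing some fixed $N_1 \in \SS$, hence of finite index; as a (possibly infinite) sum of $P_l(\O_K)$-submodules it is again closed under each generator $a_j$, so $N^\ast \in \SS$; and by construction $N \subseteq N^\ast$ for every $N \in \SS$, so $N^\ast$ is the unique maximal element. (Concretely $N^\ast$ is a finite sum $N_1 + \cdots + N_t$, since an ascending chain of finite-index subgroups of $\Z^n$ stabilises.) The main obstacle is the one flagged above — making sure that $\SS$ is nonempty and that the relevant sums stay inside $M$ rather than merely inside $V$ — i.e.\ correctly handling the fact that $M$ itself is generally \emph{not} a $P_l(\O_K)$-module; everything else is routine module bookkeeping.
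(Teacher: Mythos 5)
Your argument is correct, but it reaches the maximal submodule by a different route than the paper. The paper exploits the finiteness of $[\O_K:\O]$ directly: it takes a transversal $T$ of $\O$ in $\O_K$ and intersects the finitely many translates $Mt$, $t\in T$; this intersection is in one stroke a $P_l(\O_K)$-submodule of finite index in $M$ that contains \emph{every} $P_l(\O_K)$-submodule of $M$, so maximality and uniqueness need no further argument, and the construction is exactly what Remark~\ref{findoksub} later turns into an algorithm. You instead first manufacture one finite-index $P_l(\O_K)$-submodule, namely $cN_0$ where $N_0$ is the $P_l(\O_K)$-span of a $\Z$-basis of $M$ and $c\in\N$ clears denominators, and then obtain the maximal one abstractly as the join $N^\ast=\sum_{N\in\SS}N$ over all finite-index $P_l(\O_K)$-submodules, using that $M\cong\Z^n$ is Noetherian (equivalently, that indices strictly drop along proper ascending chains) to see the join is attained and lies in $\SS$. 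Both proofs rest on the same finiteness input --- $\O_K$ is a finite $\Z$-module, equivalently $\O$ has finite index in $\O_K$ --- and in fact produce the same module; yours trades the paper's canonical and readily computable description for a cleaner split into ``existence'' plus ``join''. One wording slip to repair: the clause asserting that ``$cM\subseteq M$ is a $P_l(\O_K)$-submodule for a suitable $c$'' is false as stated, since $cM$ is closed under $\O_K$ if and only if $M$ is, whatever $c$; this does no harm, because the object you actually justify and then use is $cN_0$, which is indeed an $\O_K$-stable subgroup of finite index in $M$, but the sentence should be rephrased accordingly.
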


\begin{proof}
Note that $\O$ has finite index in $\O_K$ as additive abelian group. Let 
$T$ be a transversal for $\O$ in $\O_K$. For each $t \in T$ the sublattice 
$M t$ is an integral $P_l(\O)$-module in $V$ and $L := \cap \{ Mt \mid t 
\in T\}$ is a $P_l(\O_K)$-submodule of finite index in $M$. 

Let $N$ be an arbitrary $P_l(\O_K)$-submodule of finite index in 
$M$. Hence $N = Nt$ for each $t \in T$,  so $N \leq L$. Thus $L$ is the
unique maximal $P_l(\O_K)$-submodule of finite index in $M$.
\end{proof}

\begin{remark}\label{findoksub}
{\rm A basis for 
this unique maximal $P_l(\O_K)$-submodule $L$ of $M$ can be computed readily.
Let $B$ be the standard $\Z$-basis of $M$ and let $W = \{w_1, \dots, w_e \}$ 
be a $\Z$-basis of $\O_K$. With respect to $B$, each $w \in W$ acts 
via a rational matrix $C_w \in M_n(\Q)$ on $M$. Let $C = 
(C_{w_1} | C_{w_2} | \dots | C_{w_e})$ be the rational $n \times en$ matrix
obtained by concatenating the matrices $C_{w_1}, \dots, C_{w_e}$. Let $d$ 
be the smallest positive integer so that $dC$ is an integral matrix. Let 
$P \in \GL(n,\Z)$ and $Q \in \GL(en, \Z)$ satisfy $P (dC) Q = D$ where $D$ 
is a diagonal matrix with diagonal $(d_1, \dots, d_n)$ satisfying $d_i \mid 
d_{i+1}$ for $1 \leq i \leq n-1$. Let $A \in M_n(\Z)$ be the diagonal matrix 
with diagonal $(a_1, \dots, a_n)$, where $a_i$ is the denominator of 
$d_i/d$. Now $A P$ is a basis for $L$. The necessary computations are part 
of the standard Smith normal form algorithm, see \cite[Section 9.3]{HEO05}.}
\end{remark}

\begin{theorem}
\label{reduct}
Let $M$ and $\hat{M}$ be $P_l(\O)$-modules. Let $L$ and $\hat{L}$
denote their unique maximal $P_l(\O_K)$-submodules of finite index. 
Let $c \in \N$ be such that $cM \leq L$. 
\begin{items}
\item[\rm (a)]
$M \cong \hat{M}$ as $P_l(\O)$-modules if and only if there exists a
$P_l(\O_K)$-module isomorphism $\gamma : L \ra \hat{L}$ with 
$\gamma(cM) = c \hat{M}$.
\item[\rm (b)]
$\Aut_{P_l(\O)}(M)$ is isomorphic to the stabiliser of $cM$ in 
$\Aut_{P_l(\O_K)}(L)$.
\end{items}
\end{theorem}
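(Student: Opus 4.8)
The plan is to follow essentially the same pattern as the proof of Theorem~\ref{module}, since the roles of $D$ and $L$ are structurally parallel: both are distinguished, fully invariant, finite-index submodules through which the whole problem factors. The crucial property replacing ``$D$ is fully invariant'' here is the one provided by Lemma~\ref{unique}: $L$ is the \emph{unique} maximal $P_l(\O_K)$-submodule of finite index in $M$. I would first record the key consequence of this uniqueness: any $P_l(\O)$-module homomorphism $M \ra \hat M$ (in particular any isomorphism, or any endomorphism of $M$) restricts to a $P_l(\O_K)$-module homomorphism $L \ra \hat L$. Indeed, $L$ is closed under $\O_K$, hence so is its image under such a map, and the image has finite index in $\hat L$ (because $L$ has finite index in $M$); thus the image is a finite-index $P_l(\O_K)$-submodule of $\hat M$ and so lies in $\hat L$ by uniqueness. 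This is the single observation that makes both parts work.

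For part~(a), the forward direction is then immediate: if $\gamma : M \ra \hat M$ is a $P_l(\O)$-isomorphism, the previous paragraph gives $\gamma(L) \subseteq \hat L$, and applying the same reasoning to $\gamma^{-1}$ gives $\gamma^{-1}(\hat L) \subseteq L$, so $\gamma$ restricts to a $P_l(\O_K)$-isomorphism $L \ra \hat L$; and $\gamma(cM) = c\gamma(M) = c\hat M$ because $\gamma$ is $\Z$-linear. For the converse, given a $P_l(\O_K)$-isomorphism $\gamma : L \ra \hat L$ with $\gamma(cM) = c\hat M$, I would define $\delta : M \ra \hat M$ by $\delta(w) = c^{-1}\gamma(cw)$, exactly as in the proofs of Theorems~\ref{module} and~\ref{reduct} --- this makes sense since $cw \in cM \subseteq L$, and $c^{-1}\gamma(cw) \in c^{-1}(c\hat M) = \hat M$. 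One checks $\delta$ is $\Z$-linear, $P_l(\O)$-linear (as $\gamma$ is even $P_l(\O_K)$-linear and scaling by $c^{\pm 1}$ commutes with the action), injective (since $cM$ has finite index and $\gamma$ is injective on it), and surjective (same argument using $\gamma^{-1}$), so $\delta$ is the required $P_l(\O)$-isomorphism.

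For part~(b), I would define $\varphi : \Aut_{P_l(\O)}(M) \ra \Aut_{P_l(\O_K)}(L)$ by restriction; it is well-defined by the key observation (applied with $\hat M = M$, $\hat L = L$), and it is a group homomorphism. Injectivity follows since $L$ has finite index in $M$ and contains $cM$, so an automorphism trivial on $L$ is trivial on $cM$ hence on $M$. Every $\alpha \in \Aut_{P_l(\O)}(M)$ satisfies $\alpha(cM) = cM$, so $\varphi(\alpha)$ stabilises $cM$; thus $\im(\varphi) \subseteq \Stab(cM)$. For the reverse inclusion, given $\gamma \in \Aut_{P_l(\O_K)}(L)$ with $\gamma(cM) = cM$, set $\delta(w) = c^{-1}\gamma(cw)$ as in part~(a) (with $\hat M = M$); then $\delta \in \Aut_{P_l(\O)}(M)$ and $\varphi(\delta) = \gamma$, exactly because $\gamma$ and $\delta$ agree on $cM$ and $L$ is determined by $cM$ via $\O_K$-closure. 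I do not expect a genuine obstacle; the only point requiring care is the verification that $\delta$ is $P_l(\O)$-linear and that $\varphi(\delta) = \gamma$ (rather than merely agreeing with $\gamma$ on $cM$), which one handles by noting $L$ is generated as a $P_l(\O_K)$-module by $cM$ --- or, more simply, by checking that $\delta|_L = \gamma$ directly: for $v \in L$, $\delta(v) = c^{-1}\gamma(cv) = c^{-1}\cdot c\,\gamma(v) = \gamma(v)$ using $\Z$-linearity of $\gamma$.
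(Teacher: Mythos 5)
Your proposal is correct and follows essentially the same route as the paper: restriction to $L$ together with the uniqueness/maximality of $\hat{L}$ gives the forward direction of (a), and the scaling map $w \ms c^{-1}\gamma(cw)$ handles the converse and part (b), which the paper itself only sketches by reference to Theorem~\ref{module}(b). The one step you assert without proof --- that the restriction of a $P_l(\O)$-homomorphism to $L$ is automatically $\O_K$-linear, so that the image of $L$ is $\O_K$-closed --- is exactly where the paper inserts a short computation (for $a \in \O_K$ choose $b \in \Z$ with $ba \in \O$ and use torsion-freeness to divide by $b$), and you should include that line to make the argument complete.
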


\begin{proof}
\leavevmode
\begin{enumerate}
\item[(a)]
Let $\rho : M \ra \hat{M}$ be a $P_l(\O)$-module isomorphism,
so $\rho(cM) = c \hat{M}$. If $a \in \O_K$, then there 
exists $b \in \Z$ such that $ba \in \O$. If $v \in L$, then 
$\rho(v a) =  b^{-1} b \rho(v a) = b^{-1} \rho( v ba) = b^{-1} ba 
\rho(v) = a \rho(v)$. Hence the restriction of $\rho$ to $L$ is an 
$\O_K$-module homomorphism that maps $L$ onto a $P_l(\O_K)$-submodule in 
$\hat{M}$. But $\hat{L}$ is the unique maximal $P_l(\O_K)$-submodule 
in $\hat{M}$,
so $\rho : L \ra \hat{L}$ is a $P_l(\O_K)$-module isomorphism.
Conversely, let $\gamma : L \ra \hat{L}$ be a $P_l(\O_K)$-module isomorphism
with $\gamma(c M) = c \hat{M}$. Define $\sigma : M \ra \hat{M}$ via
$\sigma(w) = c^{-1} \gamma(cw)$. Now $\sigma$ is a $P_l(\O)$-module
isomorphism from $M$ to $\hat{M}$.
\item[(b)] 
This follows by arguments similar to the proof of Theorem \ref{module} (b). 
\qedhere
\end{enumerate}
\end{proof}

Theorem~\ref{reduct} reduces the isomorphism and automorphism group
problems for $P_l(\O)$-modules to the same problems for $P_l(\O_K)$-modules
via orbit-stabiliser constructions similar to those for Theorem~\ref{module}.

\section{Modules defined over maximal orders}
\label{max-orders} 

Let $K$ be an algebraic number field with maximal order $\O_K$ and let
$l \in \mathbb{N}$. Write $P_l(\O_K) = \O_K[y]/(y^l)$ for the truncated 
polynomial ring over $\O_K$. Our goal is to develop the necessary theory 
to underpin {\em practical} and {\em constructive} algorithms to solve the
isomorphism and automorphism group problems for integral $P_l(\O_K)$-modules. 

\subsection{Some structure theory}
\label{struct}

Let $M$ be an integral $P_l(\O_K)$-module. 
Let $\rank(A)$ denote the torsion free rank of an abelian group $A$. 
Define
\begin{eqnarray*}
K_i(M) &=& \{ m \in M \mid m y^i = 0 \} 
      \;\; \mbox{ for } 0 \leq i \leq l + 1, \\
I_{i,j}(M) &=& K_j(M) y^{j-i} + K_{i-1}(M) 
      \;\; \mbox{ for } 1 \leq i \leq l \mbox{ and } 
           i \leq j \leq l+1, \\
L_i(M) &=& I_{i, i+1}(M) = K_{i+1}(M) y + K_{i-1}(M) 
      \;\; \mbox{ for } 1 \leq i \leq l, \\
Q_i(M) &=& K_i(M) / L_i(M) 
      \;\; \mbox{ for } 1 \leq i \leq l, \mbox{ and } \\
r_i &=& \rank(Q_i(M)) / \rank(\O_K) \mbox{ for } 1 \leq i \leq l.
\end{eqnarray*}
The sequence of integers $(r_1, \dots, r_l)$ is the {\em type} of $M$. 
The series of $P_l(\O_K)$-submodules
\[ M = K_{l+1}(M) = K_l(M) \geq K_{l-1}(M) \geq \dots \geq K_0(M) = \{0\}\]
is refined for $1 \leq i \leq l$ by the $P_l(\O_K)$-submodule series
\[ K_i(M) = I_{i,i}(M) 
   \geq I_{i,i+1}(M) \geq \dots \geq I_{i,l+1}(M) = K_{i-1}(M).\]
Let $(\Sigma)$ denote the resulting refined series of 
$P_l(\O_K)$-submodules $I_{i,j}(M)$ through $M$. Observe
that $(\Sigma)$ is fully invariant under $P_l(\O_K)$-isomorphisms by 
construction. The following lemma describes the quotients of $(\Sigma)$.

\begin{lemma}
\label{modseries}
Let $M$ be an integral $P_l(\O_K)$-module. Then $y$ acts trivially on 
each quotient $K_i(M)/K_{i-1}(M)$ and thus on each quotient of $(\Sigma)$.
Further $I_{i,j}(M) / I_{i,j+1}(M) \cong Q_j(M)$ as $\O_K$-modules for 
$1 \leq i \leq j \leq l$.
\end{lemma}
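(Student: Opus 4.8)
The plan is to establish the three assertions in order, treating each as a statement about the graded pieces of the filtration $(\Sigma)$. First, for the claim that $y$ acts trivially on $K_i(M)/K_{i-1}(M)$: if $m \in K_i(M)$, then $my^i = 0$, so $(my)y^{i-1} = 0$, which says exactly that $my \in K_{i-1}(M)$. Hence $my + K_{i-1}(M) = 0$ in the quotient, i.e. $y$ annihilates $K_i(M)/K_{i-1}(M)$. Since each $I_{i,j}(M)$ sits between $K_{i-1}(M)$ and $K_i(M)$, and the quotient $I_{i,j}(M)/I_{i,j+1}(M)$ is a subquotient of $K_i(M)/K_{i-1}(M)$, the same conclusion follows for every quotient of $(\Sigma)$. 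This also shows that each such quotient is naturally an $\O_K[y]/(y) = \O_K$-module, so the final isomorphism statement makes sense as a statement of $\O_K$-modules.

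The substantive part is the isomorphism $I_{i,j}(M)/I_{i,j+1}(M) \cong Q_j(M)$ for $1 \leq i \leq j \leq l$. I would realise this via multiplication by $y^{j-i}$. Define
\[
\psi : K_j(M) \ra I_{i,j}(M)/I_{i,j+1}(M) : a \ms a y^{j-i} + I_{i,j+1}(M).
\]
This is well-defined and $\O_K$-linear because $y^{j-i}$ commutes with the $\O_K$-action, and it lands in $I_{i,j}(M) = K_j(M)y^{j-i} + K_{i-1}(M)$ by definition. It is surjective: a general element of $I_{i,j}(M)$ is $a y^{j-i} + b$ with $a \in K_j(M)$, $b \in K_{i-1}(M)$, and $b \in K_{i-1}(M) = I_{i,l+1}(M) \subseteq I_{i,j+1}(M)$, so modulo $I_{i,j+1}(M)$ it equals $\psi(a)$. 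It remains to identify the kernel of $\psi$ as $L_j(M) = K_{j+1}(M)y + K_{j-1}(M)$; granting this, the induced map $Q_j(M) = K_j(M)/L_j(M) \to I_{i,j}(M)/I_{i,j+1}(M)$ is the desired $\O_K$-isomorphism.

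For the kernel computation: $a \in \ker\psi$ means $a y^{j-i} \in I_{i,j+1}(M) = K_{j+1}(M)y^{j+1-i} + K_{i-1}(M)$, so $a y^{j-i} = c y^{j+1-i} + d$ with $c \in K_{j+1}(M)$ and $d \in K_{i-1}(M)$. Then $(a - cy)y^{j-i} = d \in K_{i-1}(M)$, which gives $(a-cy)y^{j-i}y^{i-1} = d y^{i-1} = 0$, i.e. $(a-cy)y^{j-1} = 0$, so $a - cy \in K_{j-1}(M)$. Since $c \in K_{j+1}(M)$ we get $a = cy + (a-cy) \in K_{j+1}(M)y + K_{j-1}(M) = L_j(M)$. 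Conversely, if $a = cy + e$ with $c \in K_{j+1}(M)$, $e \in K_{j-1}(M)$, then $a y^{j-i} = c y^{j+1-i} + e y^{j-i}$; the first term lies in $K_{j+1}(M)y^{j+1-i} \subseteq I_{i,j+1}(M)$, and the second satisfies $(ey^{j-i})y^i = e y^{j} = (e y^{j-1})y = 0$ using $e \in K_{j-1}(M)$, so $e y^{j-i} \in K_i(M)$; moreover $e y^{j-i} \cdot y^{i-1} = e y^{j-1} = 0$ shows $e y^{j-i} \in K_{i-1}(M) \subseteq I_{i,j+1}(M)$ — wait, one must be slightly careful when $j-i=0$, where $e y^{j-i} = e \in K_{j-1}(M) = K_{i-1}(M) \subseteq I_{i,j+1}(M)$ directly. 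In all cases $a y^{j-i} \in I_{i,j+1}(M)$, so $a \in \ker\psi$. Hence $\ker\psi = L_j(M)$, completing the proof.

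The main obstacle I anticipate is the kernel computation, and specifically keeping the index bookkeeping with the exponents $j-i$, $j+1-i$, $i-1$ straight and handling the degenerate boundary case $i=j$ (where $\psi$ is essentially the identity map on $K_j(M)$ modulo $K_{j-1}(M) = I_{j,j+1}(M)$) uniformly with the generic case. Everything else — well-definedness, $\O_K$-linearity, surjectivity — is a routine unravelling of the definitions of $K$, $I$, $L$, $Q$.
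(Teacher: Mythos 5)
Your proposal is correct and follows essentially the same route as the paper: the identical observation $my^i=0 \Rightarrow my\in K_{i-1}(M)$ for the first claim, and the same map $a\mapsto ay^{j-i}+I_{i,j+1}(M)$ on $K_j(M)$, shown surjective with kernel exactly $L_j(M)$ by the same index computation. The extra details you supply (explicit surjectivity, the check that $ey^{j-i}\in K_{i-1}(M)$, the $i=j$ boundary case) are fine and need no special treatment beyond the general argument.
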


\begin{proof}
Let $i \in \{1, \dots, l\}$ and $a \in K_i(M)$. Observe that
 $a y y^{i-1} = a y^i 
= 0$ so $ay \in K_{i-1}(M)$. Hence $y$ acts trivially on $K_i(M)/
K_{i-1}(M)$ and on the quotients of $(\Sigma)$. Define 
$$\sigma_{i,j} : K_j(M) \ra I_{i,j}(M) / I_{i,j+1}(M) : a \ms a y^{j-i} 
+ I_{i,j+1}(M).$$ 
Thus $\sigma_{i,j}$ is surjective by construction. We 
show that $\ker(\sigma_{i,j}) = L_j(M)$.
If $a \in L_j(M)$, then $a = by + c$ for some $b \in K_{j+1}(M)$
and $c \in K_{j-1}(M)$. Thus $a y^{j-i} = by^{j-i+1} + cy^{j-i}$. 
But 
$b y^{j-i+1} \in K_{j+1}(M) y^{j-i+1}$ and $c y^{j-i} \in K_{i-1}(M)$,
so $a y^{j-i} \in I_{i,j+1}(M)$. Thus $a \in \ker(\sigma_{i,j})$.
Conversely, if $a \in \ker(\sigma_{i,j})$, then $a y^{j-i} \in I_{i,j+1}(M)$.
Thus $a y^{j-i} = b y^{j+1-i} + c$ for some $b \in K_{j+1}(M)$ and 
$c \in K_{i-1}(M)$. Hence $(a-by) y^{j-i} = c \in K_{i-1}(M)$. Therefore
$$(a-by) y^{j-1} = (a-by) y^{j-i} y^{i-1} = c y^{i-1} = 0,$$ 
so $a-by = x \in K_{j-1}(M)$. Hence $a = by + x \in K_{j+1}(M) y + K_{j-1}(M)
= L_j(M)$.
\end{proof}

A $P_l(\O_K)$-module $M$ is {\em standard} if it is integral and its
quotients $Q_1(M), \dots, Q_l(M)$ are 
free as $\O_K$-modules, and so torsion free as abelian groups. 
Standard modules play a key role in 
our algorithm. We now investigate them in more detail.

\subsection{Standard modules and their isomorphisms and automorphisms}
\label{standardautiso}

Let $M$ be a standard $P_l(\O_K)$-module of type $(r_1, \dots, r_l)$. 
For $1 \leq j \leq l$ let $\FF_j$ be a set of preimages of a free 
generating set for $Q_j(M)$ as $\O_K$-module under the natural
epimorphism $\varphi_j : K_j(M) \ra Q_j(M)$. Let $U_j$ and $W_j$ be 
the $\O_K$-submodule and the $P_l(\O_K)$-module generated by $\FF_j$,
respectively. The following theorem asserts that 
the elements of the sequence $\FF = (\FF_1, \dots, \FF_l)$ 
generate $M$ as $P_l(\O_K)$-module.
We call $\FF$ a {\em standard generating sequence} for $M$. 

\begin{theorem}
\label{standnf}
Let $M$ be a standard $P_l(\O_K)$-module of type $(r_1, \dots, r_l)$.
\begin{enumerate}
\item[\rm (a)]
$K_j(M) = U_j \oplus L_j(M)$ and
$I_{i,j}(M) = U_j y^{j-i} \oplus I_{i,j+1}(M)$ for $1 \leq i \leq l$
and $i \leq j \leq l$.
\item[\rm (b)]
$U_j y^k \cong Q_j(M)$ as $\O_K$-modules for $1 \leq j \leq l$ and
$0 \leq k \leq j-1$.
\item[\rm (c)]
$W_j = U_j \oplus U_j y \oplus \dots \oplus U_j y^{j-1}$ as $\O_K$-module
for $1 \leq j \leq l$.
\item[\rm (d)]
$W_j \cong P_j(\O_K)^{r_j}$ as $P_l(\O_K)$-modules for $1 \leq j \leq l$.
\item[\rm (e)]
$M = W_1 \oplus \dots \oplus W_l$ as $P_l(\O_K)$-module.
\end{enumerate}
\end{theorem}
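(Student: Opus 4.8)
The plan is to prove the five statements essentially in order, since each one feeds into the next, and to extract the whole structure from the single key fact established in Lemma~\ref{modseries}: that $\sigma_{i,j}\colon K_j(M)\to I_{i,j}(M)/I_{i,j+1}(M)$ has kernel exactly $L_j(M)$, so that $I_{i,j}(M)/I_{i,j+1}(M)\cong Q_j(M)$ as $\O_K$-modules, together with the standing hypothesis that each $Q_j(M)$ is \emph{free} over $\O_K$. Freeness is what makes the relevant short exact sequences split, and it is the engine behind every ``$\oplus$'' in the statement.

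For (a): since $\FF_j$ is a set of preimages of a free $\O_K$-generating set of $Q_j(M)=K_j(M)/L_j(M)$, the composite $U_j\hookrightarrow K_j(M)\twoheadrightarrow Q_j(M)$ is an isomorphism of $\O_K$-modules; hence $K_j(M)=U_j\oplus L_j(M)=U_j\oplus I_{i,i+1}(M)$, which is the case $i=j$ of the second assertion (recall $I_{j,j+1}=L_j$ and $y^{j-i}=y^0$). For general $i\le j$, apply $y^{j-i}$: since $\sigma_{i,j}$ restricted to $U_j$ is injective with image $I_{i,j}(M)/I_{i,j+1}(M)$ — injective because $\ker\sigma_{i,j}=L_j(M)$ meets $U_j$ trivially, surjective onto the quotient because $\sigma_{i,j}$ is surjective and $\FF_j$ generates $Q_j$ — we get $I_{i,j}(M)=U_jy^{j-i}+I_{i,j+1}(M)$ with the sum direct, the intersection being $U_jy^{j-i}\cap I_{i,j+1}(M)$, which maps to $0$ in the quotient, hence is $0$. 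This simultaneously proves (b): the map $U_j\to U_jy^k$, $u\mapsto uy^k$, is the restriction of $\sigma_{j-k,j}$ (up to identification $I_{j-k,j}/I_{j-k,j+1}\cong Q_j$), hence an $\O_K$-isomorphism onto $U_jy^k\cong Q_j(M)$, valid for $0\le k\le j-1$ since then $j-k\ge 1$ lies in the allowed range.

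For (c): $W_j$ is the $P_l(\O_K)$-submodule generated by $\FF_j\subseteq K_j(M)$, so $W_j$ is spanned over $\O_K$ by $U_j, U_jy,\dots,U_jy^{j-1}$ (higher powers vanish on $U_j\subseteq K_j(M)$). That the sum $U_j\oplus U_jy\oplus\dots\oplus U_jy^{j-1}$ is direct follows by a descending filtration argument: $U_jy^{j-1}\subseteq K_1(M)$ and the lower terms $U_j,\dots,U_jy^{j-2}$ already lie in $\sum_{i}I_{i,j}$-type pieces — more cleanly, intersect with the $K_\bullet$ filtration, noting $U_jy^k\subseteq K_{j-k}(M)$, and use (a) repeatedly to peel off one summand at a time. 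Then (d) is immediate: $W_j$ is generated by the $r_j$ elements of $\FF_j$, each killed by $y^j$, and the $\O_K$-module decomposition in (c) together with $U_j\cong\O_K^{r_j}$ shows $W_j$ is $\O_K$-free of rank $jr_j=\rank_{\O_K}P_j(\O_K)^{r_j}$ with the $y$-action matching; the obvious $P_l(\O_K)$-epimorphism $P_j(\O_K)^{r_j}\twoheadrightarrow W_j$ is then an isomorphism by comparing $\O_K$-ranks.

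The genuine work is in (e), and I expect it to be the main obstacle. One shows $M=\sum_{j=1}^l W_j$ and that the sum is direct. For the spanning: $M=K_l(M)$, and by the case $i=j=l$ of (a), $K_l(M)=U_l\oplus L_l(M)$ with $L_l(M)=K_{l+1}(M)y+K_{l-1}(M)=K_l(M)y+K_{l-1}(M)$; iterating $a = u + a'y + (\text{element of }K_{l-1})$ and controlling the $a'y$ term by descending induction on the $K_\bullet$-filtration lets one rewrite any $m\in M$ as an $\O_K$-combination of the $\FF_jy^k$, i.e.\ $m\in\sum_j W_j$; the bookkeeping — that applying $y$ to a ``remainder'' in $K_{i-1}$ does not reintroduce top-level terms — is where care is needed, and it is cleanest to argue that the associated graded of $M$ with respect to $(\Sigma)$ is $\bigoplus_{i\le j}I_{i,j}/I_{i,j+1}\cong\bigoplus_j Q_j^{\,\oplus(\#\{i:i\le j\})}$ and that $\bigoplus_j W_j$ surjects onto each graded piece with the matching rank, forcing equality and directness simultaneously by a rank count ($\sum_j jr_j\rank\O_K = \rank M$). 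I would present it via this graded/filtration argument rather than a hands-on induction, as it makes both the spanning and the directness fall out of Lemma~\ref{modseries} and parts (a)--(d) at once.
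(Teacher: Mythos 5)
Your proposal is correct, and for parts (a)--(d) it coincides with the paper's proof: freeness of $Q_j(M)$ splits $K_j(M) \ra Q_j(M)$ so that $K_j(M)=U_j\oplus L_j(M)$, restricting the maps $\sigma_{i,j}$ of Lemma \ref{modseries} to $U_j$ gives $I_{i,j}(M)=U_jy^{j-i}\oplus I_{i,j+1}(M)$, (b) follows from this together with Lemma \ref{modseries}, (c) is the same peeling along the $K_\bullet$-filtration, and (d) is the same rank comparison for the epimorphism $P_j(\O_K)^{r_j}\ra W_j$. The only real divergence is (e), which you single out as the main obstacle and handle by a graded argument: spanning because $U_jy^{j-i}\subseteq W_j\cap I_{i,j}(M)$ surjects onto each quotient $I_{i,j}(M)/I_{i,j+1}(M)$, and directness by the rank count $\sum_j jr_j\rank(\O_K)=\rank(M)$ together with torsion-freeness (equality of ranks forces the $\Q$-spans of the $W_j$, hence the $W_j$ themselves, to intersect trivially). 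That argument is valid, but heavier than necessary: the paper simply telescopes the second identity of (a), with $i$ fixed and $j$ running from $i$ to $l$, to obtain $K_i(M)=U_i\oplus U_{i+1}y\oplus\dots\oplus U_ly^{l-i}\oplus K_{i-1}(M)$, and then runs down $i=l,\dots,1$; since every step is already a direct sum, spanning and directness come simultaneously, and (e) follows from (a), (c), (d) with no rank counting or associated-graded bookkeeping. So the ``genuine work'' you anticipate in (e) evaporates once (a) is stated for all $i\le j$, which is precisely why the paper formulates (a) in that generality.
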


\begin{proof}
\mbox{}
\begin{enumerate}
\item[(a)]
Since $Q_j(M)$ is free, the natural epimorphism $K_j(M) \ra Q_j(M)$ with
kernel $L_j(M)$ splits. The construction of $U_j$ implies that $K_j(M)
= U_j \oplus L_j(M)$.
The proof of Lemma \ref{modseries} asserts that 
$$\sigma_{i,j} : K_j(M)
\ra I_{i,j}(M) / I_{i,j+1}(M) : a \ms a y^{j-i} + I_{i,j+1}(M)$$ is
surjective with kernel $L_j(M)$. Hence the restriction of $\sigma_{i,j}$
to $U_j$ is an isomorphism of the form $U_j \ra I_{i,j}(M)/I_{i,j+1}(M):
a \ms a y^{j-i} + I_{i,j+1}(M)$. Thus $I_{i,j}(M) = U_j y^{j-i}
+ I_{i,j+1}(M)$. Finally, the sum $U_j y^{j-i} + I_{i,j+1}(M)$ is direct,
since $I_{i,j}(M)/I_{i,j+1}(M) \cong Q_j(M)$ is free.

\item[(b)]
This follows from (a) and Lemma \ref{modseries}.

\item[(c)]
Fix $j \in \{1, \dots, l\}$. By definition, $W_j$ is the $P_l(\O_K)$-module
generated by the $\O_K$-module $U_j$. As $W_j \leq K_j(M)$, this yields that
$W_j = U_j + U_jy \dots + U_j y^{j-1}$. 
Further, $K_j(M) = U_j \oplus L_j(M)$ by (a). 
If $k > 0$ then 
$U_j y^k \leq K_{j-1}(M) \leq L_j(M)$, so  
$U_j y^k \cap U_j = \{0\}$. Hence $U_j y^{k+h} \cap
U_j y^k = \{0\}$ for $h \geq 0$, so $W_j$ is the direct sum of the
$\O_K$-submodules $U_j y^k$.

\item[(d)]
Fix $j \in \{1, \dots, l\}$. As $W_j \leq K_j(M)$, it follows that
$W_j y^j = \{0\}$ and $W_j$ is a $P_j(\O_K)$-module. Since $W_j$ is
generated by the $r_j$ elements in $\FF_j$, it is
an epimorphic image of the free module $P_j(\O_K)^{r_j}$. Now (a)
implies that the ranks of $W_j$ and $P_j(\O_K)^{r_j}$ as $\O_K$-modules
agree, thus the two modules are isomorphic.

\item[(e)]
Repeated application of (a) shows that 
\begin{eqnarray*}
K_i(M) &=& U_i \oplus I_{i,i+1}(M) \\
       &=& U_i \oplus U_{i+1} y \oplus I_{i,i+2}(M) \\
       &=& \ldots \\
       &=& U_i \oplus U_{i+1}y \oplus \ldots U_l y^{l-i} \oplus
           I_{i,l+1}(M) \\ 
       &=& U_i \oplus U_{i+1}y \oplus \ldots U_l y^{l-i} \oplus
           K_{i-1}(M). 
\end{eqnarray*}
Since $K_l(M) = M$ and $K_0(M) = \{0\}$ we deduce that 
$M = \oplus_{i=1}^l \oplus_{j=i}^l U_j y^{j-i}$ as $\O_K$-module, or, 
equivalently by reorganising the terms of the direct sum, 
$M = \oplus_{k=1}^l \oplus_{h=0}^{k-1} U_k y^h$.
Now (c) and (d) imply that 
$M = W_1 \oplus \dots \oplus W_l$, a direct sum of
$P_l(\O_K)$-modules.
\qedhere
\end{enumerate}
\end{proof}

\begin{remark}
{\rm 
To visualize standard $P_l(\O_K)$-modules we use diagrams
such as the following for the case $l = 4$.
\[
\begin{array}{|l|l|l|l|}
\hline
 & & & U_4 \\
\hline
 & & U_3 & U_4 y \\
\hline
 & U_2 & U_3 y & U_4 y^2 \\
\hline
U_1 & U_2 y & U_3y^2 & U_4 y^3  \\
\hline
\end{array}
\]

\noindent
The rows illustrate the series of $K_i$ in $M$ since
\[ K_i(M) = U_i \oplus U_{i+1} y \oplus \ldots \oplus U_l y^{l-i}
            \oplus K_{i-1}(M), \]
and they also exhibit its refinement by the series $(\Sigma)$. The 
columns illustrate the direct decomposition of $M$ as 
$P_l(\O_K)$-module since 
\[ W_j = U_j \oplus U_j y \oplus \ldots \oplus U_j y^{j-1}.\]
Certain submodules have multiple names.
For example, $K_4(M) = K_5(M) = I_{4,4}(M)$ and
$K_3(M) = L_4(M) = I_{3,3}(M) = I_{4,5}(M)$.
More generally, $I_{i,l+1}(M) = K_{i-1}(M) = I_{i-1,i-1}(M)$. 
}
\end{remark}

Theorem~\ref{standnf} has as an immediate consequence the 
following effective isomorphism test for standard modules. 

\begin{theorem}\cite[Lemma 12]{Gru80}
\label{standisom}
Let $M$ and $\hat{M}$ be two standard $P_l(\O_K)$-modules with 
standard generating sequences $\FF$ and $\hat{\FF}$, respectively. 
\begin{enumerate}
\item[(a)]
$M \cong \hat{M}$ as $P_l(\O_K)$-modules if and only if the types of 
$M$ and $\hat{M}$ agree.
\item[(b)]
If $M \cong \hat{M}$ as $P_l(\O_K)$-modules, then each bijection $\FF
\ra \hat{\FF}$ mapping $\FF_i$ onto $\hat{\FF}_i$ for $1 \leq i \leq l$
induces a $P_l(\O_K)$-module isomorphism from $M$ to $\hat{M}$.
\end{enumerate}
\end{theorem}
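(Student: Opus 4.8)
The plan is to deduce both parts directly from the decomposition $M = W_1 \oplus \dots \oplus W_l$ with $W_j \cong P_j(\O_K)^{r_j}$ supplied by Theorem~\ref{standnf}. For part (a), the forward implication is immediate, since the series $(\Sigma)$ and hence the quotients $Q_i$ are fully invariant under $P_l(\O_K)$-isomorphisms, so isomorphic modules have equal type. For the converse, suppose $M$ and $\hat M$ have the same type $(r_1,\dots,r_l)$. Then by Theorem~\ref{standnf}(d) we have $W_j \cong P_j(\O_K)^{r_j} \cong \hat W_j$ as $P_l(\O_K)$-modules for each $j$, and taking the direct sum of these isomorphisms over $1 \le j \le l$ yields $M \cong \hat M$.

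For part (b), the point is that the isomorphism just constructed can be taken to be the one induced by any prescribed bijection $\FF \to \hat\FF$ respecting the grading. First I would observe that, because $\FF_j$ is by construction a set of preimages of a \emph{free} $\O_K$-generating set of $Q_j(M)$, the $\O_K$-module $U_j$ is free on $\FF_j$ (the composite $U_j \hookrightarrow K_j(M) \twoheadrightarrow Q_j(M)$ is an isomorphism by Theorem~\ref{standnf}(a)), and similarly $\hat U_j$ is free on $\hat\FF_j$. Hence any bijection $\FF_j \to \hat\FF_j$ extends uniquely to an $\O_K$-module isomorphism $U_j \to \hat U_j$. Next, since $W_j$ is the $P_l(\O_K)$-module generated by $\FF_j$ and is a free $P_j(\O_K)$-module on $\FF_j$ by Theorem~\ref{standnf}(c),(d), this extends further to a $P_l(\O_K)$-module isomorphism $W_j \to \hat W_j$ sending $\FF_j$ to $\hat\FF_j$; concretely $\sum_{k} u_k y^k \mapsto \sum_k \phi_j(u_k) y^k$ where $\phi_j\colon U_j \to \hat U_j$ is the chosen $\O_K$-isomorphism. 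Finally I would take the direct sum $\bigoplus_{j=1}^l$ of these maps and use Theorem~\ref{standnf}(e) for both $M$ and $\hat M$ to conclude that the result is a $P_l(\O_K)$-module isomorphism $M \to \hat M$ whose restriction to $\FF$ is the given bijection.

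The only subtlety, and the step I would be most careful about, is checking that a graded bijection of generators genuinely extends to a well-defined module homomorphism — i.e.\ that no relations are violated. This is exactly where freeness is essential: the decomposition $M = \bigoplus_j W_j$ with each $W_j$ a free $P_j(\O_K)$-module on $\FF_j$ means $M$ has a presentation whose only relations are $f y^j = 0$ for $f \in \FF_j$, and these are respected by any map sending $\FF_j$ to $\hat\FF_j$ (since $\hat f \hat y^j = 0$ as well, $\hat\FF_j \subseteq K_j(\hat M)$). So the extension is automatic once the direct-sum structure from Theorem~\ref{standnf} is in hand, and the proof is essentially a matter of assembling that structure; no serious obstacle remains beyond bookkeeping.
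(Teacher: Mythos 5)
Your proof is correct and follows the paper's own route: the paper derives Theorem~\ref{standisom} as an immediate consequence of the decomposition $M = W_1 \oplus \dots \oplus W_l$ with $W_j \cong P_j(\O_K)^{r_j}$ from Theorem~\ref{standnf}, which is exactly the structure you assemble, with the freeness of $U_j$ on $\FF_j$ and of $W_j$ as a $P_j(\O_K)$-module justifying the extension of the graded bijection. Your elaboration of the well-definedness step is sound and simply makes explicit what the paper leaves as immediate.
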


\begin{remark}\label{standard-mod-isom}
{\rm
  To decide if a finitely generated torsion-free $\O_K$-module is free,
and if so find a basis, is a classical problem from algebraic number
theory; we outline a solution in Remark \ref{steinitz}.
We decide if a given $P_l(\O_K)$-module $M$ is standard by
computing $Q_i(M) = K_i(M)/L_i(M)$ directly from the definition and then
  testing whether $Q_i(M)$ is free for $1 \leq i \leq l$; this
also yields the type.  If $M$ is standard, then,
 by pulling back the bases along the natural projections
$K_i(M) \to Q_i(M)$ for $1 \leq i \leq l$,
  we compute a standard generating sequence for $M$.
  Theorem~\ref{standisom} underpins 
an algorithm to decide if two standard $P_l(\O_K)$-modules
  are isomorphic and, if so, to construct an isomorphism.
}
\end{remark}

We now investigate the automorphism group of a standard $P_l(\O_K)$-module 
$M$ of type $(r_1, \dots, r_l)$. We use the natural action of an 
automorphism of $M$ on the quotients $Q_1(M), \dots, Q_l(M)$ to define
\[ \rho : \Aut_{P_l(\O_K)}(M) \ra 
          \Aut_{\O_K}(Q_1(M)) \times \dots \times \Aut_{\O_K}(Q_l(M)).\]

\begin{theorem} \cite[Lemma 16]{Gru80}
\label{standauto}
Let $M$ be a standard $P_l(\O_K)$-module of type $(r_1, \dots, r_l)$.
Then $\rho$ is surjective and $\Aut_{P_l(\O_K)}(M)$ splits over $\ker(\rho)$.
\end{theorem}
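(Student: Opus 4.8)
The plan is to exploit the explicit direct-sum decomposition $M = W_1 \oplus \dots \oplus W_l$ with $W_j \cong P_j(\O_K)^{r_j}$ from Theorem~\ref{standnf}, together with the fact that $Q_j(M) \cong \O_K^{r_j}$ and that $W_j/yW_j \cong \O_K^{r_j}$ recovers $Q_j(M)$ under $\rho$. First I would show surjectivity of $\rho$ by writing down an explicit splitting on the level of generators: given $(\alpha_1, \dots, \alpha_l) \in \prod_j \Aut_{\O_K}(Q_j(M))$, lift each $\alpha_j \in \GL(r_j, \O_K)$ to the $\O_K$-module automorphism of $U_j$ that it induces via the chosen free generating set $\FF_j$, extend this $\O_K$-linearly and $y$-equivariantly to $W_j = U_j \oplus U_j y \oplus \dots \oplus U_j y^{j-1}$ (this is forced, since $W_j$ is free over $P_j(\O_K)$ on $\FF_j$), and take the direct sum over $j$. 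This produces $\sigma \in \Aut_{P_l(\O_K)}(M)$ with $\rho(\sigma) = (\alpha_1, \dots, \alpha_l)$, and the assignment $(\alpha_j)_j \mapsto \sigma$ is visibly a group homomorphism, hence a splitting of $\rho$. So $\Aut_{P_l(\O_K)}(M)$ splits over $\ker(\rho)$.

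The one subtlety is that the splitting must be well-defined and independent of incidental choices, and above all must actually be a section of $\rho$, i.e.\ one must verify that the endomorphism of $M$ built block-by-block genuinely commutes with the $y$-action and with $\O_K$ and is invertible. Invertibility is immediate because each $\alpha_j$ is invertible and the construction is functorial in $j$; $y$-equivariance holds because on $W_j = P_j(\O_K)^{r_j}$ the map is by definition $P_j(\O_K)$-linear (we transport the $\GL(r_j,\O_K)$-action along the isomorphism $W_j \cong P_j(\O_K)^{r_j}$ of Theorem~\ref{standnf}(d)), and $\O_K$-linearity is built in. That $\rho(\sigma) = (\alpha_j)_j$ is then a matter of chasing through the identification $Q_j(M) = K_j(M)/L_j(M)$, under which the free generators $\FF_j$ map to the chosen basis of $Q_j(M)$ and $U_j \hookrightarrow K_j(M) \twoheadrightarrow Q_j(M)$ is an isomorphism by Theorem~\ref{standnf}(a).

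I expect the surjectivity and the splitting to be essentially the same computation, carried out once: the group-theoretic statement ``$\rho$ surjective and $\Aut_{P_l(\O_K)}(M)$ splits over $\ker(\rho)$'' is equivalent to exhibiting a homomorphic section of $\rho$, which is exactly what the block construction provides. The main obstacle — and the reason this is not completely routine — is bookkeeping: one has a filtration $(\Sigma)$ in which $K_j(M)$ involves contributions $U_m y^{m-j}$ from all $m \geq j$, so an arbitrary automorphism does not respect the $W_j$-decomposition, only the $K_i$-filtration; the point is that the image of $\rho$ only sees the top piece $U_j$ of each $K_j$ modulo $L_j$, and the section is allowed to be the ``diagonal'' one that ignores the lower-triangular interaction terms. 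Making this precise requires stating clearly that $\ker(\rho)$ consists of those automorphisms acting trivially on every $Q_j(M)$, i.e.\ those that on each $U_j$ differ from the identity by something landing in $L_j(M)$, and then observing that the diagonal section and $\ker(\rho)$ together generate $\Aut_{P_l(\O_K)}(M)$ with trivial intersection. No hard analysis is needed; the work is in organizing the indices and invoking Theorem~\ref{standnf} at the right places.
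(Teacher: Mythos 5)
Your proposal is correct and follows essentially the same route as the paper: use the standard generating sequence and the decomposition $M = W_1 \oplus \dots \oplus W_l$ from Theorem~\ref{standnf}, transport each $\alpha_j \in \Aut_{\O_K}(Q_j(M))$ to $U_j$, let it act $P_j(\O_K)$-linearly on $W_j$ while fixing the other summands, and observe that this block construction is a homomorphic section of $\rho$, giving both surjectivity and the splitting. Your additional verification details ($y$-equivariance, that $\rho(\sigma)=(\alpha_j)_j$, and that arbitrary automorphisms need not respect the $W_j$-decomposition) are sound elaborations of what the paper leaves implicit.
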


\begin{proof}
Let $\FF = (\FF_1, \dots, \FF_l)$ be a 
standard generating sequence for $M$ and let $U_j$ be the 
$\O_K$-module generated by $\FF_j$. So $U_j \cong Q_j(M)$ is free of rank 
$r_j$. Thus $\Aut_{\O_K}(U_j) \cong \Aut_{\O_K}(Q_j(M))$ and 
$\Aut_{\O_K}(U_j)$ acts naturally on $W_j = U_j \oplus U_j y \oplus \dots 
\oplus U_j y^{j-1}$.  This allows us to define an embedding 
$\Aut_{\O_K}(U_j) \ra 
\Aut_{P_l(\O_K)}(M)$, where $\Aut_{\O_K}(U_j)$ acts naturally on $W_j$ and 
leaves the other direct summands of $M = W_1 \oplus \dots \oplus W_l$
invariant. Hence $\rho$ is surjective and the construction allows us to 
construct a subgroup of $\Aut_{P_l(\O_K)}(M)$ isomorphic to the image 
of $\rho$. 
\end{proof}

\begin{remark}\label{standautoremark}
{\rm 
The proof of Theorem \ref{standauto} reduces the construction of generators 
for a subgroup of $\Aut_{P_l(\O_K)}(M)$ isomorphic to $\im(\rho)$ to the 
construction of generators 
for $\Aut_{\O_K}(Q_j(M))$ for $1 \leq j \leq l$.
Since $Q_j(M)$ is free, 
$\Aut_{\O_K}(Q_j(M)) \cong \GL(r_j, \O_K)$;
it is generated by $\SL(r,\O_K)$ and 
diagonal matrices with diagonal of the form
$(u,1,\dotsc,1)$ where $u$ is a unit of $\O_K$. 
Generators for $\SL(r,\O_K)$ can be determined 
from a $\Z$-basis of $\O_K$ if $r \geq 3$, or $r = 2$ and $K$ is real 
quadratic, as described in \cite{BMS1967, Vas1972}. If $r = 2$ and $K$ is 
imaginary quadratic, then generators for $\SL(r,\O_K)$ can be computed as
described in \cite{Swan1971}, see also \cite[Chapter 7]{EGM1998}; the 
authors of \cite{Braun2015} solve this case in greater generality and 
provide an implementation in \textsc{Magma}.
Generators for the unit group of $\O_K$ can be computed using the algorithms in 
\cite[Section 6]{Co1993}. 
}
\end{remark}

It remains to construct generators for $\ker(\rho)$. As before, let $M$ be a
standard module with standard generating sequence 
$\FF = (\FF_1, \dots, \FF_l)$.
Each element $\alpha$ of $\ker(\rho)$ is determined by its images on
$\FF$. Since $\alpha$ maps $K_j(M)$ to $K_j(M)$ and induces the identity 
on $Q_j(M)$, for each $f \in \FF_j$ there exists $t_f \in L_j(M)$ 
with $\alpha(f) = f+t_f$. The following lemma shows that this 
characterises the elements of $\ker(\rho)$.

\begin{lemma}
\label{charker}
Let $M$ be a standard $P_l(\O_K)$-module with standard generating
sequence $\FF$.
For each $f \in \FF_j$ choose an arbitrary $t_f \in L_j(M)$; the mapping
$\FF \ra \FF : f \ms f+t_f$ induces an element of $\ker(\rho)$.
\end{lemma}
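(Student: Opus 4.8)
The plan is to build the claimed element of $\ker(\rho)$ in three steps: first extend the set map $f \ms f + t_f$ (over all $f \in \FF_j$, $1 \leq j \leq l$) to a $P_l(\O_K)$-module endomorphism $\alpha$ of $M$; next check that $\alpha$ induces the identity on each quotient of the fully invariant series $(\Sigma)$, in particular on every $Q_j(M)$; and finally deduce that $\alpha$ is invertible, so that $\alpha$ is an automorphism lying in $\ker(\rho)$.

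For the first step I would invoke Theorem~\ref{standnf}: by part~(e), $M = W_1 \oplus \dots \oplus W_l$ as $P_l(\O_K)$-modules, and by part~(d) each $W_j \cong P_j(\O_K)^{r_j}$ with standard basis corresponding to $\FF_j$. Hence a $P_l(\O_K)$-module homomorphism $W_j \ra M$ is exactly a choice, for each $f \in \FF_j$, of an element of $M$ annihilated by $y^j$, i.e.\ an element of $K_j(M)$, and a homomorphism $M \ra M$ is then just a tuple of such choices over $1 \leq j \leq l$. So I only need that each prescribed image $f + t_f$ lies in $K_j(M)$. This holds since $\FF_j \subseteq K_j(M)$ by construction and $L_j(M) = K_{j+1}(M)y + K_{j-1}(M) \subseteq K_j(M)$, because $(my)y^j = m y^{j+1} = 0$ for $m \in K_{j+1}(M)$ and $K_{j-1}(M) \subseteq K_j(M)$. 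Thus the map extends to a $P_l(\O_K)$-module endomorphism $\alpha$ with $\alpha(f) = f + t_f$ for all $f$.

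For the second step, $\alpha$ preserves $(\Sigma)$ since $(\Sigma)$ is fully invariant, so $\alpha$ acts on each quotient $I_{i,j}(M)/I_{i,j+1}(M)$. By Theorem~\ref{standnf}(a), $I_{i,j}(M) = U_j y^{j-i} \oplus I_{i,j+1}(M)$, so it suffices to check $\alpha(f y^{j-i}) \equiv f y^{j-i} \pmod{I_{i,j+1}(M)}$ for $f \in \FF_j$. Writing $t_f = by + c$ with $b \in K_{j+1}(M)$ and $c \in K_{j-1}(M)$, one computes $\alpha(f y^{j-i}) - f y^{j-i} = t_f y^{j-i} = b y^{(j+1)-i} + c y^{j-i}$; here $b y^{(j+1)-i} \in K_{j+1}(M)\, y^{(j+1)-i} \subseteq I_{i,j+1}(M)$, while $c y^{j-1} = 0$ forces $c y^{j-i} \in K_{i-1}(M) \subseteq I_{i,j+1}(M)$. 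So $\alpha$ induces the identity on every quotient of $(\Sigma)$; the case $i = j$ gives that $\alpha$ induces the identity on each $Q_j(M)$, so $\rho(\alpha)$ will be trivial once $\alpha$ is known to be an automorphism.

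Finally, set $\beta := \alpha - \id \in \End_{P_l(\O_K)}(M)$. By the previous step $\beta$ maps each term of $(\Sigma)$ into the next term, so $\beta^N = 0$ where $N$ is the length of $(\Sigma)$; hence $\alpha = \id + \beta$ is invertible in $\End_{P_l(\O_K)}(M)$ with inverse $\sum_{k=0}^{N-1} (-1)^k \beta^k$, giving $\alpha \in \Aut_{P_l(\O_K)}(M)$, and therefore $\alpha \in \ker(\rho)$. The one place that needs genuine care is the computation in the second step — verifying that $t_f y^{j-i}$ actually lands in $I_{i,j+1}(M)$ — which hinges on the precise shape of $L_j(M)$ and of $I_{i,j+1}(M)$; everything else is formal bookkeeping with the decompositions from Theorem~\ref{standnf}.
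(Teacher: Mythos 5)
Your proof is correct, but it takes a different route from the paper. The paper's argument is essentially two lines: since each $t_f$ lies in $L_j(M) = \ker\varphi_j$, the shifted sets $\hat{\FF}_j = \{f + t_f\}$ project to the same free generating set of $Q_j(M)$, so $\hat{\FF}$ is again a standard generating sequence, and Theorem~\ref{standisom}(b) (applied with $\hat{M} = M$) immediately yields an automorphism extending $f \ms f + t_f$ which acts trivially on each $Q_i(M)$. You instead construct the map as an endomorphism by hand, using that Theorem~\ref{standnf}(d),(e) exhibits $M$ as $\bigoplus_j W_j$ with $W_j$ free over $P_j(\O_K)$ on $\FF_j$, so that prescribing images in $K_j(M)$ suffices; you then prove invertibility by showing $\alpha - \id$ strictly lowers the filtration $(\Sigma)$ and is therefore nilpotent, so $\alpha$ is unipotent with explicit inverse $\sum_k (-1)^k(\alpha-\id)^k$. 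Your filtration computation ($t_f y^{j-i} \in I_{i,j+1}(M)$) is exactly the calculation the paper uses afterwards in Lemma~\ref{kerseries}, so in effect you prove Lemma~\ref{charker} and Lemma~\ref{kerseries} simultaneously; the paper's route is shorter because it delegates the automorphism property to the already-established isomorphism criterion for standard generating sequences, while yours is more self-contained and produces an explicit inverse. One small point to make explicit: when you say $\alpha$ preserves $(\Sigma)$ "since $(\Sigma)$ is fully invariant", note that the paper only states invariance under isomorphisms, and at that stage $\alpha$ is not yet known to be invertible; what you need (and what is true, and immediate) is that each $I_{i,j}(M) = K_j(M)y^{j-i} + K_{i-1}(M)$ is preserved by every $P_l(\O_K)$-endomorphism, because the $K_m(M)$ are kernels of the action of $y^m$ and hence characteristic. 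This is not a gap, just a sentence worth adding.
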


\begin{proof}
Let $\hat{\FF}_j = \{ f + t_f \mid f \in \FF_j\}$ for $1 \leq j \leq l$.
If $\varphi_j : K_j(M) \ra Q_j(M)$ is the natural epimorphism, then 
$\varphi_j(\FF_j) = \varphi_j(\hat{\FF}_j)$. Thus $\FF_j$ and
$\hat{\FF}_j$ are both sets of preimages of the same free generating set
for $Q_j(M)$. Hence $\FF$ and $\hat{\FF} = (\hat{\FF}_1, \dots, 
\hat{\FF}_l)$ are both standard generating sequences for $M$. Theorem 
\ref{standisom} implies that the map $\FF \ra \FF : f \ms f + t_f$ 
extends to an $P_l(\O_K)$-automorphism of $M$ which acts trivially on each
quotient $Q_i(M)$.
\end{proof}

\begin{lemma}
\label{kerseries}
Let $M$ be a standard $P_l(\O_K)$-module. Each element of $\ker(\rho)$ 
induces the identity on each quotient of the series $(\Sigma)$. 
In particular, $\ker(\rho)$ is nilpotent.
\end{lemma}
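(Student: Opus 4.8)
The plan is to exploit the refined series $(\Sigma)$ directly. By Lemma~\ref{modseries}, the quotients of $(\Sigma)$ are precisely (copies of) the $\O_K$-modules $Q_j(M)$, and $(\Sigma)$ is fully invariant under $P_l(\O_K)$-isomorphisms. So any $\alpha \in \ker(\rho)$ preserves every term $I_{i,j}(M)$ of $(\Sigma)$ and induces on each factor $I_{i,j}(M)/I_{i,j+1}(M)$ an $\O_K$-automorphism; the claim is that this induced automorphism is the identity. The hard part will be transporting the hypothesis — that $\alpha$ acts trivially on each $Q_j(M) = K_j(M)/L_j(M)$, i.e.\ trivially on the factor $I_{j,j}(M)/I_{j,j+1}(M)$ sitting at the ``top'' of the $j$-th block — to the other factors $I_{i,j}(M)/I_{i,j+1}(M)$ with $i < j$, which lie lower down in $(\Sigma)$.

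The mechanism for that transport is the surjection $\sigma_{i,j}\colon K_j(M) \to I_{i,j}(M)/I_{i,j+1}(M)$, $a \mapsto ay^{j-i} + I_{i,j+1}(M)$, with kernel $L_j(M)$, from the proof of Lemma~\ref{modseries}. Fix $\alpha \in \ker(\rho)$ and $a \in K_j(M)$. Since $\alpha$ is trivial on $Q_j(M)$, we have $\alpha(a) = a + t$ with $t \in L_j(M)$. Applying $\alpha$ (which commutes with multiplication by $y$ and preserves $I_{i,j+1}(M)$) to the class $\sigma_{i,j}(a)$ gives $\alpha(a)y^{j-i} + I_{i,j+1}(M) = ay^{j-i} + ty^{j-i} + I_{i,j+1}(M)$, so it suffices to check $ty^{j-i} \in I_{i,j+1}(M)$. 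Writing $t = by + c$ with $b \in K_{j+1}(M)$, $c \in K_{j-1}(M)$, we get $ty^{j-i} = by^{j-i+1} + cy^{j-i}$; here $by^{j-i+1} \in K_{j+1}(M)y^{(j+1)-i} \subseteq I_{i,j+1}(M)$ and $cy^{j-i} \in K_{j-1}(M) \subseteq K_{i-1}(M) \subseteq I_{i,j+1}(M)$, since $i \le j$. Hence $\sigma_{i,j}(\alpha(a)) = \sigma_{i,j}(a)$ and, $\sigma_{i,j}$ being surjective, $\alpha$ induces the identity on $I_{i,j}(M)/I_{i,j+1}(M)$. As $(\Sigma)$ exhausts all factors, $\alpha$ is trivial on every quotient of $(\Sigma)$.

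For the ``in particular'' clause, I would argue that $\ker(\rho)$ is a group of automorphisms of the finite-length filtered module $M$ that acts trivially on every factor of the fixed filtration $(\Sigma)$. Such a group is nilpotent by a standard stability argument: commutators progressively raise the ``depth'' at which an automorphism can differ from the identity along the filtration, so iterated commutators of length equal to (or exceeding) the length of $(\Sigma)$ are trivial. Concretely, the maps sending $\alpha \in \ker(\rho)$ to the tuple of homomorphisms $x + I_{i,j+1}(M) \mapsto \alpha(x) - x + I_{i,j+1}(M)$ on successive factors exhibit $\ker(\rho)$ as acting ``unitriangularly'' on $(\Sigma)$, and one invokes the classical fact (e.g.\ from the theory of stability groups of subnormal series, see \cite[Section 4.1]{HEO05} or standard references) that the stability group of a finite series is nilpotent of class bounded by the length of the series. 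I expect the set-up — verifying that the bracket of two elements of $\ker(\rho)$ lands one step deeper — to be routine once the first part is in place; no genuine obstacle remains there.
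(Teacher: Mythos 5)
Your argument is correct and runs along essentially the same lines as the paper's: in both proofs the whole content is the containment $L_j(M)\,y^{j-i} \leq I_{i,j+1}(M)$, applied to $\alpha(v) = v + t$ with $t \in L_j(M)$. The only real difference is the choice of representatives for $I_{i,j}(M)/I_{i,j+1}(M)$: the paper takes them from $U_j y^{j-i}$, using a standard generating sequence and the decomposition $I_{i,j}(M) = U_j y^{j-i} \oplus I_{i,j+1}(M)$ of Theorem~\ref{standnf}, whereas you use the surjectivity of $\sigma_{i,j} \colon K_j(M) \to I_{i,j}(M)/I_{i,j+1}(M)$ from Lemma~\ref{modseries}; your variant has the mild advantage of not using standardness at all for the first assertion, only that $\alpha$ fixes each $Q_j(M)$ elementwise. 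One intermediate line is misstated: the inclusion $K_{j-1}(M) \subseteq K_{i-1}(M)$ is backwards for $i<j$, since the kernels $K_i(M)$ increase with $i$. What you need, and what is true, is $cy^{j-i} \in K_{i-1}(M)$, which follows from $(cy^{j-i})y^{i-1} = cy^{j-1} = 0$; alternatively you can skip this computation altogether, because $ty^{j-i} \in I_{i,j+1}(M)$ for $t \in L_j(M)$ is exactly the statement $L_j(M) \subseteq \ker(\sigma_{i,j})$ already proved in Lemma~\ref{modseries}. For the nilpotency you invoke the classical stability-group theorem (a group acting trivially on every factor of a finite chain of subgroups is nilpotent of class bounded in terms of the length of the chain), which is the same fact the paper leaves implicit in ``the result follows''; just note that \cite[Section 4.1]{HEO05} is not a source for that theorem, so cite Kaloujnine/Hall or a standard group-theory text instead.
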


\begin{proof}
Let $1 \leq i \leq j \leq l$ and let $\FF$ be a standard generating sequence
for $M$. 
Recall that $U_j$ is the $\O_K$-submodule of $M$ generated 
by $\FF_j$. As $M$ is a standard module, 
Theorem \ref{standnf} implies 
that $I_{i,j}(M) = U_j y^{j-i} \oplus I_{i,j+1}(M)$.

Let $\alpha \in \ker(\rho)$ and $f \in U_j$. By definition 
$\alpha(f) = f + t_f$ 
for some $t_f \in L_j(M)$. Thus 
$$\alpha(f y^{j-i}) = \alpha(f) y^{j-i} 
= (f + t_f)y^{j-i} = fy^{j-i} + t_fy^{j-i}.$$ 
Note that 
$t_f y^{j-i} \in 
L_j(M) y^{j-i} \leq I_{i,j+1}(M)$, so $\alpha$ induces 
the identity on $I_{i,j}(M)/ I_{i,j+1}(M)$. The result follows.
\end{proof} 

The construction of generators for $\ker(\rho)$ is delicate, requiring 
a special generating set for $L_j(M)$. 
For $1 \leq j \leq l$ we define 
\[ \TT_j = \bigcup_{k=j+1}^l \FF_k y^{k-j}
 \;\; \mbox{ and } \;\;
   \SS_j = \bigcup_{k=j}^l \FF_k y^{k-j}. \]

\begin{lemma}
\label{genslj}
Let $M$ be a standard $P_l(\O_K)$-module with standard generating sequence
$\FF$, let $B$ be a $\Z$-basis for $\O_K$, and let $1 \leq j \leq l$. 
\begin{enumerate}
\item[\rm (a)]
$\LL_j = \SS_1 \cup \dots \cup \SS_{j-1} \cup \TT_j$ generates $L_j(M)$ 
as $\O_K$-module.
\item[\rm (b)]
$\LL_j B = \{ g b \mid g \in \LL_j, b \in B \}$ generates $L_j(M)$ as
free abelian group.
\item[\rm (c)]
$\LL_j B \cap I_{i,k}(M)$ generates $L_j(M) \cap I_{i,k}(M)$ as free 
abelian group for $1 \leq i \leq l$ and $i \leq k \leq l+1$.
\end{enumerate}
\end{lemma}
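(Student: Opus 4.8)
The plan is to prove (a) first, then derive (b) as a routine consequence, and finally prove (c) by a filtration argument. For part (a), I would start from the definition $L_j(M) = K_{j+1}(M)y + K_{j-1}(M)$ and use the direct decompositions from Theorem~\ref{standnf}. Specifically, by Theorem~\ref{standnf}(a), repeated application gives $K_{j-1}(M) = \bigoplus_{i=1}^{j-1}\bigoplus_{k=i}^{l} U_k y^{k-i}$ as $\O_K$-module, and similarly $K_{j+1}(M)y = \bigl(\bigoplus_{k=1}^{l}\bigoplus_{h=0}^{k-1} U_k y^h\bigr) y$ restricted to the part lying in $K_{j+1}(M)$. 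The cleaner route is to observe that $K_{j+1}(M) = \bigoplus_{k}\bigoplus_{h} U_k y^h$ over the appropriate index range, so $K_{j+1}(M)y$ is generated as $\O_K$-module by the elements $f y^{m}$ with $f \in \FF_k$ and $m \geq 1$ in the range dictated by membership in $K_{j+1}(M)$. Reindexing, the set $\TT_j = \bigcup_{k=j+1}^{l}\FF_k y^{k-j}$ together with $\SS_1 \cup \dots \cup \SS_{j-1}$ (which generates $K_{j-1}(M)$) produces exactly $\LL_j$; the point is that $\SS_i = \bigcup_{k=i}^{l}\FF_k y^{k-i}$ generates $K_i(M)$ as $\O_K$-module, by Theorem~\ref{standnf}(a) and the decomposition $K_i(M)=\bigoplus_{k=i}^l U_k y^{k-i}$, so $\SS_1 \cup \dots \cup \SS_{j-1}$ generates $K_{j-1}(M)$ and $\TT_j$ supplies a generating set for $K_{j+1}(M)y$ modulo $K_{j-1}(M)$.

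Part (b) then follows immediately: if a set $G$ generates an $\O_K$-module $N$, and $B$ is a $\Z$-basis of $\O_K$, then $GB = \{gb \mid g \in G, b \in B\}$ generates $N$ as abelian group. Applying this with $G = \LL_j$ and $N = L_j(M)$, which is free abelian since $M$ is integral, gives that $\LL_j B$ generates $L_j(M)$ as free abelian group.

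For part (c), the key observation is that $\LL_j B$ is not merely a generating set but is \emph{compatible} with the series $(\Sigma)$: each element $gb$ of $\LL_j B$ lies in a well-defined term $I_{i,k}(M)$ of the series, because $g$ is of the form $f y^{k-m}$ with $f \in \FF_m$, so $g \in U_m y^{k-m} \subseteq I_{i,k}(M)$ for suitable $i,k$, and multiplying by $b \in \O_K$ preserves this. Concretely, using the direct-sum decomposition $M = \bigoplus_{k}\bigoplus_{h=0}^{k-1} U_k y^h$ from Theorem~\ref{standnf}(e), the elements of $\LL_j B$ split as a disjoint union over which summand $U_k y^h$ they lie in, and each term $I_{i,k}(M)$ of the series is precisely the $\O_K$-span of a sub-collection of these summands — equivalently, $I_{i,k}(M)$ is spanned over $\Z$ by $\{U_m y^h \cdot b : m \geq \dots, \text{etc.}\}$ in a way that is a "lower set" with respect to the indexing. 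Hence intersecting the generating set $\LL_j B$ with the spanning set of the submodule $I_{i,k}(M)$ recovers a generating set for $L_j(M)\cap I_{i,k}(M)$: any element of the intersection, written in the basis adapted to $M = \bigoplus U_k y^h$, has all its components in summands that appear in \emph{both} $L_j(M)$ and $I_{i,k}(M)$, i.e.\ in summands hit by $\LL_j B \cap I_{i,k}(M)$.

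The main obstacle I anticipate is bookkeeping in part (c): one must verify that the index ranges genuinely interact as a distributive lattice, so that the intersection of the "span of a lower set" (namely $L_j(M)$) with another "span of a lower set" (namely $I_{i,k}(M)$) is the span of the intersection of the two lower sets, and that this intersection is exactly $\LL_j B \cap I_{i,k}(M)$. This is where the free-ness of the $Q_i(M)$ is essential — it guarantees all the sums in Theorem~\ref{standnf} are direct, so there are no torsion obstructions to reading off components — and where one should be careful that $\LL_j$ was defined using $\SS_i$ for $i<j$ and $\TT_j$, whose union does decompose cleanly along the grid $\{U_k y^h\}$. I expect the verification to reduce to a purely combinatorial statement about which cells of the $l\times l$ triangular grid lie in $I_{i,k}(M)$ versus $L_j(M)$, which one can check directly from the formulas $I_{i,k}(M) = K_k(M)y^{k-i} + K_{i-1}(M)$ and the grid description of the $K_m(M)$.
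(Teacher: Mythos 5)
Your treatment of (a) and (b) matches the paper's: from Theorem~\ref{standnf}(a) one gets $K_{j-1}(M)=\bigoplus_{m=1}^{j-1}\bigoplus_{k=m}^{l}U_ky^{k-m}$ and $L_j(M)=K_{j-1}(M)\oplus U_{j+1}y\oplus\dots\oplus U_ly^{l-j}$, so $\LL_j$ generates $L_j(M)$ over $\O_K$, and multiplying an $\O_K$-generating set by the $\Z$-basis $B$ gives (b). One slip to repair: your later claim that $\SS_i$ alone generates $K_i(M)$, ``by the decomposition $K_i(M)=\bigoplus_{k=i}^{l}U_ky^{k-i}$'', is false for $i\geq 2$ (that decomposition omits $K_{i-1}(M)$, and the $\O_K$-span of $\SS_i$ meets $K_{i-1}(M)$ trivially); what you actually need, and what your first, correct, decomposition gives, is that $\SS_1\cup\dots\cup\SS_{j-1}$ generates $K_{j-1}(M)$, so the argument stands. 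For (c) you take a genuinely different route from the paper. You argue componentwise in the grid $M=\bigoplus_{k}\bigoplus_{h=0}^{k-1}U_ky^h$: both $L_j(M)$ and each $I_{i,k}(M)$ are direct sums of sub-collections of the cells $U_my^h$, the elements of $\LL_jB$ lying in a fixed cell form a $\Z$-basis of that cell, and a nonzero element of a cell lies in $I_{i,k}(M)$ exactly when the whole cell does (freeness removes torsion obstructions); hence $\LL_jB\cap I_{i,k}(M)$ is the union of the cell bases over the common cells and generates $L_j(M)\cap I_{i,k}(M)$. The bookkeeping you flag is exactly the standard fact that the intersection of two sub-sums of a direct sum is the sum of the common summands, so this does go through. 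The paper avoids it by using that the $I_{i,k}(M)$ form a chain with $L_j(M)=I_{j,j+1}(M)$ a term of it: either $L_j(M)\leq I_{i,k}(M)$, and (c) reduces to (b), or $I_{i,k}(M)\leq L_j(M)$, in which case its explicit $\O_K$-generating set $\FF_ky^{k-i}\cup\dots\cup\FF_ly^{l-i}\cup\SS_1\cup\dots\cup\SS_{i-1}$ is visibly contained in $\LL_j$ and one concludes at once. Your version buys uniformity (no case split, and it would apply to intersections with any cell-spanned submodule, not only terms of $(\Sigma)$ comparable with $L_j(M)$); the paper's buys brevity from the chain structure; both rest on the same decompositions from Theorem~\ref{standnf}.
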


\begin{proof}
\mbox{}
\begin{enumerate}
\item[
(a)] Theorem \ref{standnf} implies that for $1 \leq j \leq l$
\begin{eqnarray*}
K_j(M) & = & K_{j-1}(M) \oplus 
U_j \oplus U_{j+1} y \oplus \dots \oplus U_l y^{l-j} \\ 
L_j(M) & = & K_{j-1}(M) \oplus U_{j+1} y \oplus \dots \oplus U_l y^{l-j}.
\end{eqnarray*}
Since $\FF_i y^k$ is a set of $\O_K$-generators for $U_i y^k$ for each
$i$ and $k$, $\SS_1 \cup \dots \cup \SS_{j - 1}$ generates $K_{j-1}(M)$.
The result follows.

\item[
(b)] This follows directly from (a).

\item[
(c)] Since $L_j(M) = I_{j,j+1}(M)$, either $L_j(M) \leq 
I_{i,k}(M)$ or $I_{i,k}(M) \leq L_j(M)$. In the first case the
result follows from (b). Consider the second case. 
Since $M$ is a standard module, $I_{i,k}(M) 
= U_ky^{k-i} \oplus \dots \oplus U_ly^{l-i} \oplus K_{i-1}(M)$.
Using (a), we obtain that $I_{i,k}(M)$ is generated as $\O_K$-module by
$$G_{i,k} := \FF_k y^{k-i} \cup \dots \cup \FF_l y^{l-i} \cup \SS_1 
\cup \dots \cup \SS_{i-1}.$$ 
If $I_{i,k}(M) \leq L_j(M)$, then $G_{i,k}
\subseteq \LL_j$. Hence $\LL_j \cap I_{i,k}(M)$ generates $I_{i,k}(M)$
as $\O_K$-module and $(\LL_j \cap I_{i,k}(M)) B = \LL_j B \cap 
I_{i,k}(M)$ generates it as abelian group. \qedhere
\end{enumerate}
\end{proof}

For $f \in \FF_i$ and $g \in L_i(M)$ define $\xi_{f,g} \in \ker(\rho)$
via $\xi_{f,g}(f) = f+g$ and $\xi_{f,g}(h) = h$ for all $h \in \FF$ 
with $h \neq f$. Note that this is well-defined by Lemma \ref{charker}.

\begin{theorem}
\label{standker}
Let $M$ be a standard $P_l(\O_K)$-module with standard generating
sequence $\FF$ 
and let $B$ be a $\Z$-basis for $\O_K$. A generating set for $\ker(\rho)$ 
is 
\[ \bigcup_{i=1}^l 
   \{ \xi_{f,g} \mid f \in \FF_i, g \in \LL_i B \}.\]
\end{theorem}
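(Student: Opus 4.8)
The plan is to show two things: first, that every $\xi_{f,g}$ with $g \in \LL_i B$ does lie in $\ker(\rho)$ (which is immediate from Lemma \ref{charker}, since $\LL_i B \subseteq L_i(M)$), and second — the substantive direction — that these finitely many automorphisms generate all of $\ker(\rho)$. For the second direction I would exploit the nilpotence provided by Lemma \ref{kerseries}: every $\alpha \in \ker(\rho)$ acts trivially on each quotient of the refined series $(\Sigma)$, so I induct along $(\Sigma)$, peeling off one quotient at a time and correcting $\alpha$ by a product of the generators $\xi_{f,g}$ until it becomes the identity.

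More concretely, fix $\alpha \in \ker(\rho)$. By Theorem \ref{standnf} each $f \in \FF_i$ has $\alpha(f) = f + t_f$ with $t_f \in L_i(M)$, and $\alpha$ is determined by the tuple $(t_f)_{f \in \FF}$. Using Lemma \ref{genslj}(b) I expand each $t_f$ in the $\Z$-basis $\LL_i B$ of $L_i(M)$: write $t_f = \sum_{g \in \LL_i B} c_{f,g}\, g$ with $c_{f,g} \in \Z$. The natural candidate is that $\alpha = \prod \xi_{f,g}^{c_{f,g}}$, but the $\xi_{f,g}$ need not commute and composing them produces cross terms — applying $\xi_{f,g}$ after other generators changes $f$ by terms already introduced, and more importantly $\xi_{f,g}$ moves $f$, which may itself occur inside later expansions. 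The key structural point that controls these cross terms is that $\TT_i$ and $\SS_k$ (for $k<i$) consist of elements of the form $f' y^m$ with $f' \in \FF_k$ for various $k$, and correcting $f' \in \FF_k$ only affects submodules deeper in $(\Sigma)$ than the level at which we are currently working. So I would order the correction from the "top" of $(\Sigma)$ downward: first adjust the images of generators in $\FF_l$, then $\FF_{l-1}$, and so on, and within each $\FF_i$ work modulo the submodule $I_{i,j+1}(M)$ for decreasing... actually increasing $j$. After multiplying $\alpha$ by the appropriate $\xi_{f,g}^{-c_{f,g}}$ at each stage, the residual automorphism fixes $\FF_i$ modulo a strictly smaller term of $(\Sigma)$, so by induction on the (finite) length of $(\Sigma)$ the residual automorphism is trivial.

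The main obstacle is bookkeeping the cross terms so that the inductive correction genuinely terminates: one must verify that when $\xi_{f,g}$ is applied to correct the image of $f \in \FF_i$, the other generators' images are affected only in components lying in parts of $(\Sigma)$ that have either already been handled or lie strictly below the current level — this is where Lemma \ref{genslj}(c), describing how $\LL_i B$ meets the submodules $I_{i,k}(M)$, does the real work. A clean way to organise this is to filter $\ker(\rho)$ by the subgroups $N_s = \{\alpha \in \ker(\rho) : \alpha \text{ induces the identity on } I/I' \text{ for the first } s \text{ quotients of } (\Sigma)\}$, show each $N_s/N_{s+1}$ is generated by (the images of) those $\xi_{f,g}$ whose "defect" $g$ sits in the corresponding quotient, and then assemble. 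I would also remark that, combined with Lemma \ref{charker}, this gives that $\ker(\rho)$ is precisely the set of all $\alpha$ arising from arbitrary choices $(t_f)_{f\in\FF}$ with $t_f \in L_i(M)$, so the generating set is as small as the $\Z$-module structure of $\bigoplus_i L_i(M)^{|\FF_i|}$ permits.
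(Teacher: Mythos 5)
Your plan is essentially the paper's proof: both induct along the series $(\Sigma)$, using Lemma \ref{kerseries} to make the per-level defect map $\alpha \ms \bigl(f \ms \ol{\alpha(f)-f}\bigr)$ into $\bigoplus_j \Hom_{\O_K}(U_j, \ol{L}_j \cap \ol{I}_{i,k})$ a genuine group homomorphism (this is exactly what tames the cross terms you worry about), and Lemma \ref{genslj}(c) to expand each defect as a $\Z$-combination of the $g \in \LL_j B \cap I_{i,k}$, so that the corresponding product of the $\xi_{f,g}^{c_{f,g}}$ cancels $\alpha$ modulo the next term of $(\Sigma)$. One small correction: your filtration subgroups $N_s$ should be defined as the elements acting trivially on $M$ modulo the $s$-th term of $(\Sigma)$, not as those inducing the identity on the first $s$ subquotients — by Lemma \ref{kerseries} the latter condition is satisfied by all of $\ker(\rho)$ — which is in fact what your step-by-step correction already implements.
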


\begin{proof}
Write $K_i = K_i(M)$, $L_i = L_i(M)$, $Q_i = Q_i(M)$ and $I_{i,k} = 
I_{i,k}(M)$. Recall that 
$U_i$ is the $\O_K$-submodule of $M$ generated by $\FF_i$. Let $J = 
\ker(\rho)$ and let $A = \langle \xi_{f,g} \mid 1 \leq i \leq l, 
f \in \FF_i, g \in \LL_i B \rangle$. The definition of $\xi_{f,g}$ 
implies that $A \subseteq J$. It remains to show $J \subseteq A$. 
\medskip

First we outline 
the general strategy of the proof. Let $\alpha \in J$. We use 
induction along the quotients of the series $(\Sigma)$
to determine a word in the generators of $A$ that coincides with $\alpha$. 
Note that $\alpha$ acts trivially on
$M / I_{l,l}$, the first quotient of the series.
In the induction step we assume that $\alpha$ acts 
trivially on $M/I_{i,k}$ for some $1 \leq i \leq l$ and some $i \leq k
\leq l$ and we determine $\gamma \in A$ so that $\alpha \gamma^{-1}$ 
acts trivially on $M/I_{i,k+1}$. We then replace $\alpha$ by $\alpha 
\gamma^{-1}$ and iterate this construction. Eventually, this produces 
an automorphism $\alpha$ that acts trivially on $M/I_{1,l+1} = M/\{0\} 
= M$ and thus is the identity. 
\medskip

Now we consider 
the induction step. Let $\alpha \in J$ and assume that $\alpha$ 
acts trivially on $M/I_{i,k}$ for some fixed $i$ and $k$. 
Let $M \ra M/I_{i,k+1} : m \ms \ol{m}$ denote the natural epimorphism onto 
the quotient $M/I_{i,k+1}$. 
Let $\ol{I}_{i,k}$ and $\ol{L}_j$ 
be the images of $I_{i,k}$ and $L_j$, respectively, 
under this epimorphism. 
Let
\[ H = \bigoplus_{j=1}^l \Hom_{\O_K}(U_j, \ol{L}_j \cap \ol{I}_{i,k}). \]
For $1 \leq j \leq l$ define $h_j : U_j \ra \ol{L}_j \cap \ol{I}_{i,k} : 
f \ms \ol{\alpha(f)-f}$. Note that this is well-defined, since $\alpha$
acts trivially on $M/I_{i,k}$ by assumption and the definition of $J$ asserts
that $\alpha(f)-f \in L_j$ for $f \in U_j$. Hence $h_j \in 
\Hom_{\O_K}(U_j, \ol{L}_j \cap \ol{I}_{i,k})$ for $1 \leq j \leq l$ and we
obtain the map
\[ \beta : J \ra H : \alpha \ms (h_1, \dots, h_l).\]
By Lemma \ref{kerseries} each element of $J$ acts trivially on 
$\ol{I}_{i,k}$ and 
so also on $\ol{L}_j \cap \ol{I}_{i,k}$ for $1 \leq j \leq l$. 
Thus $\beta$ is an epimorphism from the multiplicative group $J$ onto
the additive group $H$. Recall from Lemma \ref{genslj}(c) that
$\L_jB \cap I_{i,k}$ generates $L_j \cap I_{i,k}$ as abelian group. Hence 
for each $f \in \FF_j$ there exists $c_{f,g} \in \Z$ with 
\[ h_j(f) = \sum_{g \in \L_j B \cap I_{i,k}} c_{f,g} \ol{g}.\]
Let
\[ \gamma = \prod_{j=1}^l 
            \prod_{f \in \FF_j} 
            \prod_{g \in \L_j B \cap I_{i,k}}
            \xi_{f,g}^{c_{f,g}} \in A.\]
Now $\beta(\gamma) = \beta(\alpha)$. It follows that $\alpha 
\gamma^{-1}$ acts trivially on $M/I_{i,k+1}$. This completes the
induction step.
\end{proof}

\begin{remark}
\label{gruncorrected}
{\rm 
Let $\alpha \in \ker(\rho)$. For each $1 \leq j \leq l$ there exists
an $\O_K$-homomorphism $h_j : U_j \ra L_j(M) : f \ms \alpha(f) - f$. Thus
there is a bijection 
\[ \ker(\rho) \ra H = \bigoplus_{j=1}^l \Hom_{\O_K}(U_j, L_j(M)) :
    \alpha \ms (h_1, \dots h_l).\]
This bijection is not necessarily a group homomorphism. Nonetheless,
Theorem \ref{standker} essentially claims that there exists a special 
generating set for $H$ that maps to a generating set of $\ker(\rho)$ via 
this bijection. In \cite[Lemma 16 ii)]{Gru80} it is claimed that an 
arbitrary set of generators for $H$ yields a generating set for 
$\ker(\rho)$. This is not always true as the following example shows.

Let $K = \Q$ with maximal order $\O_K = \Z$ and let
$M = \Z^3$ be the $P_3(\O_K) = \Z[x, y]/(x - 1, y^3)$-module, where $x$ acts
as the identity on $M$ and $y$ acts via multiplication from the right as
\[ \left( \begin{array}{rrr}
         0 & 0 & -1 \\ 0 & 1 & 1 \\ 0 & -1 & -1
       \end{array} \right). \]
Observe that $M$ is a standard module of type $(0,0,1)$. Thus $U_1$ and
$U_2$ are trivial, $U_3 = \langle (1,0,0) \rangle$ and $L_3(M) = 
\langle (0, 1, 0), (0,0,1) \rangle$. Let $f = (1,0,0) \in U_3$;
it generates $U_3$ as $\O_K$-module and $M$ as $P_3(\O_K)$-module.
Thus $\{ f, fy, fy^2 \} = \{ (1,0,0), (0,0,-1), (0,1,1)\}$ generates 
$M$ as $\O_K$-module. Also $H = \Hom_{\O_K}(U_3, L_3(M))$. 

Let $g_1 = (0,1,0)$ and $g_2 = (0,0,1)$. Now $L_3(M) = \langle g_1,
g_2 \rangle$. Thus $H$ is generated by $\beta_1, \beta_2$ defined by 
$\beta_i : f \ms g_i$.  These homomorphisms expand to
\begin{eqnarray*}
  \beta_1 &:& f \phantom{y^2} = (1,0,\phantom{-}0) \ms g_1 \phantom{y^2}= (0,1,0) \\
        &&     f y\phantom{^2} = (0,0,-1) \ms g_1 y\phantom{^2} = (0,1,1) \\
        &&     f y^2 = (0,1,\phantom{-}1) \ms g_1 y^2 = (0,0,0); \\
  \beta_2 &:& f\phantom{y^2} = (1,0,\phantom{-}0) \ms g_2\phantom{y^2} = (0,\phantom{-}0,\phantom{-}1) \\
        &&     f y\phantom{^2} = (0,0,-1) \ms g_2 y\phantom{^2} = (0,-1,-1) \\
        &&     f y^2 = (0,1,\phantom{-}1) \ms g_2 y^2 = (0,\phantom{-}0,\phantom{-}0).
\end{eqnarray*}
Let $\alpha_i = \beta_i + \id$. Note that $g_1 = fy + fy^2$ and $g_2 = 
- fy$. Thus
\begin{eqnarray*}
\alpha_2(\alpha_1(f)) 
  & = & \alpha_2( f + g_1 ) 
  = \alpha_2( f + fy + fy^2 ) 
  =  \alpha_2( f ) + \alpha_2( fy ) + \alpha_2( fy^2 ) \\
  & = & (f-fy) + (f-fy)y + (f-fy)y^2
  = f.
\end{eqnarray*}
Hence $\alpha_1 = \alpha_2^{-1}$ and $\langle \alpha_1, \alpha_2 \rangle$ 
is infinite cyclic. 

Let $\bar{g}_1 = fy = (0,0,-1)$ and $\bar{g}_2 = fy^2 = (0,1,1)$. Now $L_3(M) =
\langle \bar{g}_1, \bar{g}_2 \rangle$ and this generating set corresponds 
to that chosen in Theorem \ref{standker}. 
As before, $H$ is generated by $\gamma_1, \gamma_2$ defined 
by $\gamma_i: f \ms \bar{g}_i$. 
Further, $\gamma_1$ maps $f \ms fy$, $fy \ms fy^2$ and $fy^2 \ms fy^3=0$
and $\gamma_2$ maps $f \ms fy^2$, $fy \ms 0$ and $fy^2 \ms 0$. Let
$\alpha_i = \gamma_i + \id$. Now $\langle \alpha_1, \alpha_2 \rangle$
is free abelian of rank $2$ and corresponds to $\ker(\rho)$.
}
\end{remark}

\begin{remark}\label{autgenremark}
{\rm
We use  Remark~\ref{standautoremark} and Theorem~\ref{standker} 
to obtain a finite generating set for the automorphism group
of a standard $P_l(\O_K)$-module. 
}
\end{remark}

\subsection{The construction of standard submodules}

We now introduce a method to construct standard submodules of 
a given finite index in an integral $P_l(\O_K)$-module $M$. Our approach 
and its effectiveness contrast with that of~\cite{Gru80}. 
Since $M$ is integral, $Q_i(M) = K_i(M)/L_i(M)$ is a finitely generated abelian group and so
has a torsion subgroup; we denote this by $\hat{L}_i(M)/L_i(M)$ and its
associated torsion-free quotient by $\hat{Q}_i(M) = K_i(M) / \hat{L}_i(M)$.
Note that $Q_i(M)$ is an $\O_K$-module and $\hat{Q}_i(M)$ is an integral 
$\O_K$-module.

\subsubsection{Free submodules of finite index}
\label{freesubs}

Fix $i \in \{1, \dots, l\}$ and write $Q = Q_i(M)$ and $\hat{Q} = 
\hat{Q}_i(M)$. Note that $Q$ is a finitely generated abelian group and
an $\O_K$-module. If $T(Q)$ is the torsion subgroup of $Q$, then 
$\hat{Q} = Q / T(Q)$ is an integral $\O_K$-module. 

\begin{remark}
\label{steinitz}
{\rm 
Steinitz theory shows that there exists a free $\O_K$-submodule in 
$\hat{Q}$ of finite index. To construct such, use \cite[Theorem 1.2.19]{Co2000}
to decompose $\hat{Q} = F \oplus I$ as $\O_K$-module, where $F$ is a free 
$\O_K$-module and $I$ can be identified with an ideal in $\O_K$. If $I$ 
is principal, then $\hat{Q}$ is free. 
Otherwise, consider
the natural homomorphism $\varphi$ from the set of non-zero ideals of 
$\O_K$ onto the ideal class group 
of $K$ and let $J$ be an ideal
in $\O_K$ with $\varphi(J) = \varphi(I)^{-1}$. Now $IJ$ is a non-zero
principal ideal of $\O_K$ and $F \oplus IJ$ is a free $\O_K$-submodule
of finite index in $\hat{Q}$. Algorithms to compute $F, I, J$ and $IJ$ are
described in \cite[Section 6.5]{Co1993} and \cite[Chapter 1]{Co2000}. 
By choosing $J$ to have minimal norm, we find a free $\O_K$-submodule 
of minimal index.
}
\end{remark}

Note that there are always only finitely many $\O_K$-submodules of a 
given finite index in $\hat{Q}$, as $\hat{Q}$ is a free abelian group of finite
rank. 

\begin{lemma}
\label{freeexist}
Let $t = |T(Q)|$ and let $r = \rank(Q) / \rank(\O_K)$.
\begin{items}
\item[\rm (a)]
Suppose that $Q$ has a free $\O_K$-submodule of index $w$. Then $t \mid w$ 
and $\hat{Q}$ has a free $\O_K$-submodule of index $w/t$.
\item[\rm (b)]
Suppose that $\hat{Q}$ has $u$ free $\O_K$-submodules of index $v$. Then 
$Q$ has $u t^r$ free $\O_K$-submodules of index $vt$.
\end{items}
\end{lemma}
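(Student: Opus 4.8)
The plan is to relate the free $\O_K$-submodules of $Q$ to those of $\hat{Q}$ via the natural projection $\pi\colon Q \ra \hat{Q} = Q/T(Q)$, using the second isomorphism theorem to control indices and a graph argument to count the fibres of this correspondence.

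For (a), I would start from a free $\O_K$-submodule $F$ of $Q$ of index $w$. Since $F$ is free over the domain $\O_K$, it is torsion free, so $F \cap T(Q) = \{0\}$ and $\pi$ restricts to an $\O_K$-isomorphism $F \ra \pi(F)$; hence $\pi(F)$ is a free $\O_K$-submodule of $\hat{Q}$. Then I would factor $[Q : F] = [Q : F + T(Q)] \cdot [F + T(Q) : F]$ and observe that $[F + T(Q) : F] = [T(Q) : F \cap T(Q)] = t$ while $[Q : F + T(Q)] = [\hat{Q} : \pi(F)]$, so that $w = t \cdot [\hat{Q} : \pi(F)]$. This gives $t \mid w$ and exhibits $\pi(F)$ as a free $\O_K$-submodule of $\hat{Q}$ of index $w/t$.

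For (b), I would introduce the map $\Phi$ that sends a free $\O_K$-submodule $F$ of $Q$ of index $vt$ to $\pi(F)$; by part (a), $\Phi$ lands in the set of $u$ free $\O_K$-submodules of $\hat{Q}$ of index $v$. The main work is to show that every fibre of $\Phi$ has exactly $t^r$ elements. Fix a target $\bar{F}$ and set $N = \pi^{-1}(\bar{F})$, so that $0 \ra T(Q) \ra N \ra \bar{F} \ra 0$ is exact. I would first identify $\Phi^{-1}(\bar{F})$ with the set of complements of $T(Q)$ in $N$: an $F$ in the fibre lies in $N$, is torsion free, and maps onto $\bar{F}$, hence satisfies $N = F \oplus T(Q)$; conversely, a complement $F$ of $T(Q)$ in $N$ is isomorphic to $N/T(Q) = \bar{F}$, hence free, maps onto $\bar{F}$, and has index $[Q : N]\,[N : F] = v\,t$ in $Q$. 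Since $\bar{F}$ is free, hence projective, the sequence splits as $N = P \oplus T(Q)$ with $P \cong \bar{F} \cong \O_K^r$ (the last isomorphism because $\bar{F}$ is free of finite index in $\hat{Q}$, and $\hat{Q}$ has $\Z$-rank $r \cdot \rank(\O_K)$). The complements of $T(Q)$ in $P \oplus T(Q)$ are exactly the graphs $\{\, p + \phi(p) \mid p \in P \,\}$ of the $\O_K$-homomorphisms $\phi\colon P \ra T(Q)$, and distinct $\phi$ yield distinct complements, so each fibre has $|\Hom_{\O_K}(\O_K^r, T(Q))| = |T(Q)|^r = t^r$ elements. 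In particular the fibres are all nonempty, $\Phi$ is surjective, and summing over the $u$ choices of $\bar{F}$ yields exactly $u\,t^r$ free $\O_K$-submodules of index $vt$ in $Q$.

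I expect the main obstacle to be the precise identification of the fibre $\Phi^{-1}(\bar{F})$: one has to check that torsion-freeness together with the prescribed index $vt$ really does force $F$ to be a complement of $T(Q)$ inside $\pi^{-1}(\bar{F})$, and, going the other way, that every such complement is genuinely free of index exactly $vt$ in $Q$. The remaining ingredients — the index computation in part (a), the splitting of $0 \ra T(Q) \ra N \ra \bar{F} \ra 0$ (which uses only that $\bar{F}$ is free), and the graph parametrisation of complements of a direct summand — are all standard.
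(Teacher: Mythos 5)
Your proposal is correct and follows essentially the same route as the paper: part (a) is the same torsion-freeness and index argument, and part (b) is the paper's complement-counting argument (complements of $T(Q)$ in the preimage of a free submodule, parametrised by $\Hom_{\O_K}(\O_K^r,T(Q))$, giving $t^r$ per choice). Your write-up merely makes explicit the fibre correspondence and exhaustiveness that the paper leaves implicit.
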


\begin{proof}
\mbox{}
\begin{enumerate}
\item[(a)]
Let $F$ be a free submodule of $Q$ of index $w$, so $F$ is torsion free
as abelian group and hence $F \cap T(Q) = \{0\}$. Thus $t \mid w$ and
$F + T(Q) / T(Q)$ is a free submodule of index $w/t$ in $\hat{Q}$.
\item[(b)]
Let $F/T(Q)$ be a free submodule of index $v$ in $\hat{Q}$, so $F$
splits over $T(Q)$ and every complement to $T(Q)$ in $F$ is a free 
submodule of $Q$. Such a complement has index $vt$ and there are
$t^r$ such complements, as $F/T(Q)$ is free of rank $r$.
\qedhere
\end{enumerate}
\end{proof}

\begin{remark}\label{allfree}
{\rm
Later we must determine all submodules
of fixed index $m$ in an integral $\O_K$-submodule of $M$.
  We use the Chinese remainder theorem to restrict to the case where
  $m$ is a prime power $p^e$.
So we induce the action of $\O_K$ on an
integral $\O_K$-submodule of $M$ to an action on $(\Z/p\Z)^n$
  and determine the $\O_K$-submodules of bounded codimension.
  The submodules of bounded codimension in a module defined over
a finite field can be effectively
  computed using variants of the {\sc MeatAxe}; for more details see
  \cite[Chapter 7]{HEO05}.
  By eliminating all non-free $\O_K$-submodules, we find all
free $\O_K$-submodules of an integral $\O_K$-module with a fixed index.
}

\end{remark}

\subsubsection{Standard submodules of finite index}
\label{standfind}

We now discuss how to construct the standard submodules of finite index
in an integral $P_l(\O_K)$-module $M$ of type $(r_1, \dots, r_l)$. 

\begin{lemma}
\label{finsub}
Let $M$ be an integral $P_l(\O_K)$-module and let $S$ be a submodule of finite 
index in $M$. Now $K_i(S) = K_i(M) \cap S$ and $L_i(S)$ is a subgroup of 
finite index in $L_i(M) \cap S$ for $1 \leq i \leq l$. Moreover $S$ has the
same type as $M$.
\end{lemma}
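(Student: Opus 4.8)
The statement bundles three assertions about a finite-index submodule $S \leq M$: (i) $K_i(S) = K_i(M) \cap S$; (ii) $L_i(S)$ has finite index in $L_i(M) \cap S$; (iii) $S$ and $M$ have the same type. I would prove them in that order, since each feeds the next.

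\textbf{Step 1: the kernels.} The identity $K_i(S) = K_i(M) \cap S$ is essentially immediate from the definition $K_i(N) = \{ m \in N \mid my^i = 0\}$. An element $m \in S$ satisfies $my^i = 0$ (computed in $M$, hence in $S$ since $S$ is a submodule) exactly when $m \in K_i(M)$; so $K_i(S) = \{ m \in S \mid my^i = 0 \} = S \cap K_i(M)$. No finiteness is needed here. In particular $K_i(S)$ has finite index in $K_i(M)$, because $S \cap K_i(M)$ has finite index in $K_i(M)$ (intersecting a finite-index subgroup with any subgroup).

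\textbf{Step 2: the $L_i$'s.} Here $L_i(S) = K_{i+1}(S)y + K_{i-1}(S)$. Using Step 1, $L_i(S) = (K_{i+1}(M)\cap S)y + (K_{i-1}(M)\cap S)$, while $L_i(M)\cap S = (K_{i+1}(M)y + K_{i-1}(M))\cap S$. The containment $L_i(S) \subseteq L_i(M)\cap S$ is clear. For finite index I would argue as follows: $K_{i+1}(S)$ has finite index $e$ in $K_{i+1}(M)$, so $eK_{i+1}(M) \subseteq K_{i+1}(S)$, hence $eK_{i+1}(M)y \subseteq K_{i+1}(S)y$; likewise $e'K_{i-1}(M) \subseteq K_{i-1}(S)$ for some $e'$. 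Thus for $N := ee'$ we get $N \cdot L_i(M) = N K_{i+1}(M)y + N K_{i-1}(M) \subseteq K_{i+1}(S)y + K_{i-1}(S) = L_i(S)$, and in particular $N(L_i(M)\cap S) \subseteq L_i(S)$. Since $L_i(M)\cap S$ is a finitely generated abelian group (a subgroup of $M \cong \Z^n$) and it contains $N$ times itself inside $L_i(S)$, the index $[L_i(M)\cap S : L_i(S)]$ is finite.

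\textbf{Step 3: same type.} Recall the type of $N$ is $(r_1(N),\dots,r_l(N))$ with $r_i(N) = \rank(Q_i(N))/\rank(\O_K)$ and $Q_i(N) = K_i(N)/L_i(N)$. Since $\rank(\O_K)$ is the same denominator for both modules, it suffices to show $\rank Q_i(S) = \rank Q_i(M)$, i.e. $\rank K_i(S) - \rank L_i(S) = \rank K_i(M) - \rank L_i(M)$. By Step 1, $\rank K_i(S) = \rank K_i(M)$ (finite index preserves rank). By Step 2, $\rank L_i(S) = \rank(L_i(M)\cap S) = \rank L_i(M)$, again because finite-index subgroups and intersections with finite-index $S$ preserve rank. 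Subtracting gives $\rank Q_i(S) = \rank Q_i(M)$ for all $i$, hence equal types.

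\textbf{Expected obstacle.} The only genuinely nontrivial point is Step 2: one must be careful that $L_i(S)$ really sits with finite index inside $L_i(M)\cap S$ rather than just inside $L_i(M)$, and that one cannot simply write $(A\cap S)y + (B\cap S) = (Ay+B)\cap S$ (this can fail). The scaling trick — multiplying $L_i(M)$ by a common multiple of the two kernel indices to push it into $L_i(S)$ — sidesteps this cleanly, and then finite generation of the ambient group does the rest. Everything else is routine bookkeeping with ranks and indices of subgroups of $\Z^n$.
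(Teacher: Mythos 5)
Your proof is correct, but it takes a genuinely different route from the paper's. The paper argues indirectly: it sets up $A_i = K_i(S)/(K_i(S)\cap L_i(M))$ and $B_i = (K_i(S)+L_i(M))/L_i(M)$, notes $A_i \cong B_i$ with $A_i$ a quotient of $Q_i(S)$ and $B_i$ of finite index in $Q_i(M)$, and then sums torsion-free ranks over the refined series $(\Sigma)$ (using Lemma \ref{modseries}) to get $\rank(S) \geq \rank(M)$; since $[M:S]<\infty$ forces equality, it concludes $\rank(Q_i(S)) = \rank(A_i) = \rank(Q_i(M))$ for every $i$, which simultaneously yields the finiteness of $(L_i(M)\cap S)/L_i(S)$ and the equality of types. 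You instead prove the finite-index assertion \emph{directly} by a scaling trick: with $e = [K_{i+1}(M):K_{i+1}(S)]$ and $e' = [K_{i-1}(M):K_{i-1}(S)]$ (both finite since $K_j(S) = K_j(M)\cap S$), you get $ee'\,L_i(M) \subseteq K_{i+1}(S)y + K_{i-1}(S) = L_i(S)$, and finiteness of $[L_i(M)\cap S : L_i(S)]$ follows because $L_i(M)\cap S$ is finitely generated; type equality then drops out of elementary rank additivity for $0 \to L_i \to K_i \to Q_i \to 0$ and the fact that finite-index subgroups have equal rank. Your argument is shorter and avoids both Lemma \ref{modseries} and the series $(\Sigma)$ entirely, and it even gives the slightly stronger fact that $L_i(S)$ has finite index in all of $L_i(M)$; the paper's argument is more global but introduces the comparison maps $Q_i(S) \to Q_i(M)$ (via $A_i \cong B_i$) that foreshadow the embedding used in Theorem \ref{goodexist}. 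Both proofs are complete; you correctly flag the one genuine pitfall, namely that $(A\cap S)y + (B\cap S)$ need not equal $(Ay+B)\cap S$, which is exactly why some argument beyond formal manipulation is needed for the $L_i$ claim.
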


\begin{proof}
Let $1 \leq i \leq l$. Clearly, $K_i(S) = \{ s \in S \mid s y^i = 0\}
= K_i(M) \cap S$. Hence $$L_i(S) = K_{i+1}(S)y + K_{i-1}(S) \leq 
K_{i+1}(M) y + K_{i-1}(M) = L_i(M),$$ so $L_i(S) \leq L_i(M) \cap S = 
L_i(M) \cap K_i(M) \cap S = L_i(M) \cap K_i(S)$. 

Define $A_i = K_i(S) / (K_i(S) \cap L_i(M))$ and $B_i=(K_i(S) + L_i(M))/L_i(M)$.
Then $A_i \cong B_i$ via the natural homomorphism, $A_i$ is isomorphic to 
a quotient of $Q_i(S)$ and $B_i$ is a submodule of $Q_i(M)$. 
The finite index of $S$ in $M$ implies that $K_i(S)$ has finite index 
in $K_i(M)$. Thus $B_i$ has finite index in $Q_i(M)$. 
We illustrate these relationships in Figure \ref{lattice}.

\begin{figure}[htb]
\begin{center}
\setlength{\unitlength}{1.5cm}
\begin{picture}(5,5)
\thicklines
\put(0,0){\circle*{0.1}}
\put(0,1){\circle*{0.1}}
\put(0,2){\circle*{0.1}}
\put(0,0){\line(0,1){2}}

\put(1,0){\circle*{0.1}}
\put(1,1){\circle*{0.1}}
\put(1,2){\circle*{0.1}}
\put(1,0){\line(0,1){2}}

\put(2,2){\circle*{0.1}}
\put(2,3){\circle*{0.1}}
\put(2,4){\circle*{0.1}}
\put(2,2){\line(0,1){2}}

\put(3,2){\circle*{0.1}}
\put(3,3){\circle*{0.1}}
\put(3,4){\circle*{0.1}}
\put(3,2){\line(0,1){2}}

\put(1,1){\line(1,1){1}}
\put(1,2){\line(1,1){1}}
\put(2.3,3){\vector(1,0){0,3}}
\put(0.7,1){\vector(-1,0){0,3}}

\put(-0.7,2){$Q_i(S)$}
\put(3.1,4){$Q_i(M)$}

\put(0.39,2.1){$K_i(S)$}
\put(1.1,0.0){$L_i(S)$}

\put(2,1.8){$L_i(M)$}
\put(2.05,4){$K_i(M)$}

\put(3.1,2.4){$B_i$}
\put(0.1,1.5){$A_i$}

\end{picture}
\end{center}
\caption{Illustrating the relationships}\label{lattice}
\end{figure}
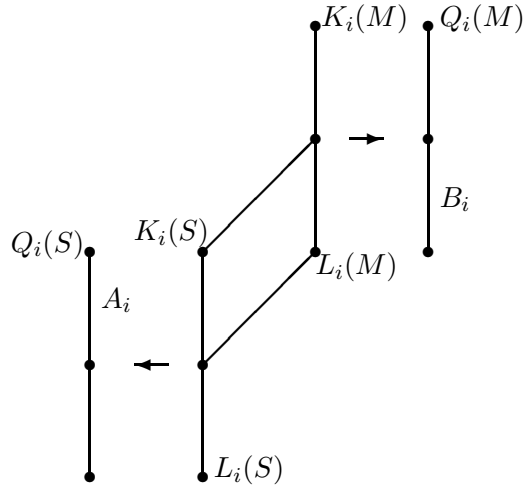

It remains to show that $L_i(S)$ has finite index in $L_i(M) \cap K_i(S)$
and that the types of $S$ and $M$ agree. Recall that $\rank$ denotes the 
torsion free rank of an abelian group. Using Lemma \ref{modseries}, we 
deduce that 

\begin{eqnarray*}
\rank(S) 
   &=& \sum_{i=1}^l \sum_{j=i}^l \rank(Q_j(S)) \\
   &\geq& \sum_{i=1}^l \sum_{j=i}^l \rank(A_j) \\
   &=& \sum_{i=1}^l \sum_{j=i}^l \rank(B_j) \\
   &=& \sum_{i=1}^l \sum_{j=i}^l \rank(Q_j(M)) \\
   &=& \rank(M).
\end{eqnarray*}
Since $S$ has finite index in $M$, $\rank(S) = \rank(M)$. Hence  
$\rank(Q_i(S)) = \rank(A_i)$ for $1 \leq i \leq l $.
Thus $L_i(S)$ has finite index in $L_i(M) \cap K_i(S)=L_i(M) \cap S$. 
It also implies that $\rank(Q_j(S)) = \rank(Q_j(M))$ for $1 \leq j \leq l$, 
so the types of $S$ and $M$ agree.
\end{proof}

We next observe that each standard submodule is associated with
certain free $\O_K$-submodules of $Q_1(M), \dots, Q_l(M)$.

\begin{theorem}
\label{goodexist}
Let $M$ be an integral $P_l(\O_K)$-module and let $S$ be a standard 
submodule of $M$ of finite index.
\begin{items}
\item[\rm (a)]
Let $1 \leq i \leq l$. Then $L_i(S) = L_i(M) \cap S$ and 
$$\rho_i \colon Q_i(S) \ra Q_i(M) : a+L_i(S) \ms a + L_i(M)$$
 is injective
with image $R_i(S)$ of finite index in $Q_i(M)$.
\item[\rm (b)]
Write $[Q_i(M) : R_i(S)] = h_i$ for $1 \leq i \leq l$. Then
\[ [M:S] = \prod_{i=1}^l \prod_{j=i}^l h_j = \prod_{k=1}^l h_k^k.\]
\end{items}
\end{theorem}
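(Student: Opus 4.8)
The plan is to prove both parts of Theorem~\ref{goodexist} by exploiting the structure theory of standard modules (Theorem~\ref{standnf}) applied to $S$, combined with the general relationships between $S$ and $M$ recorded in Lemma~\ref{finsub}.

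For part (a), I would fix $i$ and first recall from Lemma~\ref{finsub} that $K_i(S) = K_i(M) \cap S$ and that $L_i(S)$ is a subgroup of finite index in $L_i(M) \cap K_i(S) = L_i(M) \cap S$; moreover $S$ and $M$ have the same type, so $\rank(Q_i(S)) = \rank(Q_i(M))$. The content of (a) beyond Lemma~\ref{finsub} is that $L_i(S)$ actually equals $L_i(M) \cap S$ rather than merely having finite index in it. Here is where I use that $S$ is \emph{standard}: by definition $Q_i(S) = K_i(S)/L_i(S)$ is torsion-free as an abelian group. Now $L_i(M) \cap S$ contains $L_i(S)$ with finite index, so $(L_i(M) \cap S)/L_i(S)$ is a finite subgroup of $Q_i(S)$, hence trivial by torsion-freeness; this gives $L_i(S) = L_i(M) \cap S$. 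Injectivity of $\rho_i$ is then immediate, since its kernel is $(K_i(S) \cap L_i(M))/L_i(S) = (L_i(M)\cap S)/L_i(S) = 0$. Finite index of the image $R_i(S)$ in $Q_i(M)$ follows because $K_i(S)$ has finite index in $K_i(M)$ (a consequence of $S$ having finite index in $M$) and $\rank(Q_i(S)) = \rank(Q_i(M))$, so the image $R_i(S) = (K_i(S) + L_i(M))/L_i(M)$ has full rank in $Q_i(M)$; being a subgroup of full rank in a finitely generated abelian group, it has finite index.

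For part (b), the strategy is to compute $[M:S]$ by passing to the graded pieces of the series $(\Sigma)$, which is fully invariant, hence induces a corresponding series on $S$ with $I_{i,j}(S) = I_{i,j}(M) \cap S$ (I would verify this intersection identity directly from the definition $I_{i,j}(M) = K_j(M)y^{j-i} + K_{i-1}(M)$, using part (a) applied at the appropriate indices together with Theorem~\ref{standnf}(a) for the standard module $S$, which makes the relevant sums direct and hence well-behaved under intersection with $S$). Then $[M:S] = \prod_{i=1}^l \prod_{j=i}^{l} [I_{i,j}(M)/I_{i,j+1}(M) : I_{i,j}(S)/I_{i,j+1}(S)]$. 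By Lemma~\ref{modseries}, $I_{i,j}(M)/I_{i,j+1}(M) \cong Q_j(M)$ as $\O_K$-modules, and similarly for $S$; under these isomorphisms the inclusion-induced map $I_{i,j}(S)/I_{i,j+1}(S) \hookrightarrow I_{i,j}(M)/I_{i,j+1}(M)$ corresponds (via the maps $\sigma_{i,j}$ from the proof of Lemma~\ref{modseries}, which restrict to isomorphisms on the $U_j$ pieces by Theorem~\ref{standnf}) to the embedding $R_j(S) \hookrightarrow Q_j(M)$. Hence each factor has index $h_j$, and $[M:S] = \prod_{i=1}^{l}\prod_{j=i}^{l} h_j = \prod_{k=1}^{l} h_k^{k}$, since the index $k$ appears once as the outer index for each $j \ge k$, i.e.\ $h_k$ is counted once for each $i \in \{1,\dots,k\}$.

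The main obstacle I anticipate is the bookkeeping in part (b): verifying carefully that the refined series $(\Sigma)$ for $M$ intersects $S$ to give the analogous series for $S$, and that the quotient maps $I_{i,j}(S)/I_{i,j+1}(S) \to I_{i,j}(M)/I_{i,j+1}(M)$ match up correctly with $\rho_j$ under the identifications with $Q_j$. The cleanest route is probably to use Theorem~\ref{standnf}(a) for the \emph{standard} module $S$ to write $I_{i,j}(S) = U_j^{S} y^{j-i} \oplus I_{i,j+1}(S)$ where $U_j^S$ is the free $\O_K$-module from a standard generating sequence of $S$, so that the $j$-th graded piece of $(\Sigma)$ restricted to $S$ is literally $U_j^S y^{j-i}$, a free $\O_K$-module mapping into the $j$-th graded piece of $(\Sigma)$ for $M$; one then only needs that this map has cokernel of order $h_j$, which reduces to the $k=0$ case, i.e.\ $[Q_j(M):R_j(S)] = h_j$, the definition. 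Everything else — multiplicativity of index along a finite series, counting exponents — is routine.
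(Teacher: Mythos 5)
Your proposal is correct and follows essentially the same route as the paper: part (a) is the paper's argument almost verbatim (torsion-freeness of $Q_i(S)$ forces $L_i(S)=L_i(M)\cap S$, whence $\rho_i$ is injective with finite-index image), and part (b) likewise factorises the index through the series $(\Sigma)$ using the maps $\sigma_{i,j}$ from Lemma~\ref{modseries}, part (a), and Theorem~\ref{standnf} applied to $S$. Your explicit verification that $I_{i,j}(S)=I_{i,j}(M)\cap S$ just fills in, with the right ingredients, the step the paper compresses into ``computing the index by factorising it through $(\Sigma)$''.
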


\begin{proof}
\begin{items}
\item[\rm (a)]
By Lemma \ref{finsub}, $L_i(S)$ is a submodule of finite index in
$L_i(M) \cap K_i(S) = L_i(M) \cap S$. Thus $(L_i(M) \cap K_i(S))/L_i(S)$
is a finite abelian subgroup of $Q_i(S)$. Since $S$ is standard, 
$Q_i(S)$ is free abelian so $L_i(M) \cap S = L_i(S)$. 
Hence $\rho_i$ is a monomorphism. Its image $R_i(S)$ has finite
index in $Q_i(M)$, as $S$ has finite index in $M$.
\item[\rm (b)]
We use the series $(\Sigma)$ and Lemma \ref{modseries}. Consider the
epimorphism $\sigma_{i,j} \colon K_j(M) \ra I_{i,j}(M)/I_{i,j+1}(M)$ with
kernel $L_j(M)$. Now $\sigma_{i,j}(K_j(S))$ has kernel $L_j(M) \cap S
  = L_j(S)$; thus the image of $\sigma_{i,j}$ restricted to $K_j(S)$ is
isomorphic to $R_j(S)$ and has index $h_j$ in the image of 
$\sigma_{i,j}$. Computing the index by factorising it through 
$(\Sigma)$ now yields the result. \qedhere 
\end{items}
\end{proof}

\begin{remark}\label{minimalstandard}
{\rm
Theorem \ref{goodexist} asserts that every standard submodule of the
integral $P_l(\O_K)$-module $M$ is associated with a sequence 
$(R_1, \dots, R_l)$ 
where $R_i$ is a free $\O_K$-submodule of finite index
in $Q_i(M)$ for $1 \leq i \leq l$.
We call the integers $(h_1, \ldots, h_l)$ from Theorem~\ref{goodexist}~(b), 
which are invariant under isomorphism, the {\em indices 
associated} with $S$.
Vice versa, by lifting bases along the natural projections $K_i(M) \to Q_i(M)$, each such sequence $(R_1,\dotsc,R_l)$ of free submodules with finite index determines a standard
submodule of $M$ with finite index.
We use Theorem~\ref{goodexist} and Remark~\ref{steinitz}
to determine standard submodules of $M$ with minimal index
by computing free submodules of minimal
index of $Q_i(M)$ for $1 \leq i \leq l$.
}
\end{remark}

We now determine the number of
standard submodules associated with $(R_1, \dots, R_l)$. 

\begin{theorem}
\label{goodcount}
Let $M$ be an integral $P_l(\O_K)$-module of  type $(r_1, \ldots, r_l)$. 
For $1 \leq i \leq l$ let 
$R_i$ be a free $\O_K$-submodule of finite index $h_i$ in $Q_i(M)$.
Define
\[ \pi_i = \frac{(h_i \cdots h_l)^{r_i+ \dots + r_l}}{h_i^{r_i}}.\]
 There are exactly $\pi_1 \cdots \pi_l$ standard submodules associated 
with $(R_1, \dots, R_l)$ in $M$.
\end{theorem}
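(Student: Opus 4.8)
The plan is to argue by induction on $l$, passing to the $P_{l-1}(\O_K)$-module $\ol M = M/K_1(M)$. When $l=1$ we have $L_1(M)=\{0\}$, so a standard submodule $S$ with $R_1(S)=R_1$ satisfies $Q_1(S)=S$ and $R_1(S)=S$, hence $S=R_1$ is the only such submodule, in agreement with $\pi_1 = h_1^{r_1}/h_1^{r_1}=1$. For the inductive step I would first record, straight from the definitions, that $M/K_1(M)$ is torsion free and $K_i(\ol M)=K_{i+1}(M)/K_1(M)$, $L_i(\ol M)=L_{i+1}(M)/K_1(M)$, so that $Q_i(\ol M)\cong Q_{i+1}(M)$ and $\ol M$ is integral of type $(r_2,\dots,r_l)$. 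If $S$ is a standard submodule of finite index in $M$ with $R_i(S)=R_i$ for all $i$, then, using the decomposition $S=W_1\oplus\dots\oplus W_l$ of Theorem~\ref{standnf}, the image $\ol S:=(S+K_1(M))/K_1(M)\cong S/K_1(S)$ is a standard $P_{l-1}(\O_K)$-submodule of finite index in $\ol M$, isomorphic to $\bigoplus_{j=2}^{l}P_{j-1}(\O_K)^{r_j}$, with associated free submodules $R_i(\ol S)=R_{i+1}$. The invariants $\pi_1,\dots,\pi_{l-1}$ attached to $(\ol M;R_2,\dots,R_l)$ coincide with $\pi_2,\dots,\pi_l$, so the inductive hypothesis gives $\pi_2\cdots\pi_l$ standard submodules of $\ol M$ associated with $(R_2,\dots,R_l)$. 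It therefore suffices to show that every fibre of $S\ms\ol S$ contains exactly $\pi_1=(h_2\cdots h_l)^{r_1}(h_1\cdots h_l)^{r_2+\dots+r_l}$ submodules.

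For the fibre over a fixed $\ol S_0$, choose a standard generating sequence $\ol\FF$ of $\ol S_0$. Every $S$ in this fibre has a standard generating sequence $\FF=(\FF_1,\dots,\FF_l)$ in which $\FF_1\subseteq K_1(M)$ lifts a fixed free basis of $R_1$ along $K_1(M)\ra Q_1(M)$ and, for $j\ge2$, $\FF_j\subseteq K_j(M)$ lifts $\ol\FF_{j-1}$ along $K_j(M)\ra K_j(M)/K_1(M)$; conversely, a rank count as in the proof of Theorem~\ref{standnf} shows that every such choice of lifts generates a standard submodule lying in this fibre and associated with $(R_1,\dots,R_l)$. So the fibre is the image of the map $\Psi$ that sends a perturbation in the abelian group $W=L_1(M)^{r_1}\oplus\bigoplus_{j=2}^{l}K_1(M)^{r_j}$ to the submodule it generates. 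By Lemma~\ref{charker} and Theorem~\ref{standisom}, two perturbations determine the same $S$ exactly when they differ by an element of $V_S:=L_1(S)^{r_1}\oplus\bigoplus_{j=2}^{l}\big(L_j(S)\cap K_1(M)\big)^{r_j}$; since $K_1(M)\le L_j(M)$ and $L_j(S)=L_j(M)\cap S$ for $j\ge2$ (Theorem~\ref{goodexist}(a)), this is $V_S=L_1(S)^{r_1}\oplus\bigoplus_{j=2}^{l}K_1(S)^{r_j}$. Thus the $\Psi$-fibres partition $W$ into cosets of the subgroups $V_S$. Picking $c$ with $cM\le S$ for all these $S$ (possible because $[M:S]=\prod_k h_k^k$ is the same for all of them, by Theorem~\ref{goodexist}(b)), the finite-index subgroup $V_\ast:=(cL_1(M))^{r_1}\oplus\bigoplus_{j\ge2}(cK_1(M))^{r_j}$ of $W$ is contained in every $V_S$; counting $V_\ast$-cosets in $W$ then yields $\sum_S[W:V_S]^{-1}=1$, so the fibre has $[W:V_S]$ elements provided this index is independent of $S$.

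It remains to compute $[W:V_S]=[L_1(M):L_1(S)]^{r_1}[K_1(M):K_1(S)]^{r_2+\dots+r_l}$. Because $K_1(S)/L_1(S)\cong R_1$ has index $h_1$ in $Q_1(M)=K_1(M)/L_1(M)$, we get $[K_1(M):K_1(S)]=h_1[L_1(M):L_1(S)]$; and since multiplication by $y$ sends $K_2(M)$ onto $L_1(M)$ and $K_2(S)$ onto $L_1(S)$ with kernels $K_1(M)$ and $K_1(S)$, we get $[L_1(M):L_1(S)]=[K_1(\ol M):K_1(\ol S_0)]$, which depends only on $\ol S_0$. Using these two identities and the base case, an easy induction shows that for every standard submodule $T$ of finite index in an integral $P_m(\O_K)$-module $N$ with associated indices $(g_1,\dots,g_m)$ one has $[K_1(N):K_1(T)]=g_1\cdots g_m$. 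Applied to $\ol S_0\le\ol M$ this gives $[K_1(\ol M):K_1(\ol S_0)]=h_2\cdots h_l$, hence $[K_1(M):K_1(S)]=h_1\cdots h_l$ and $[L_1(M):L_1(S)]=h_2\cdots h_l$; therefore $[W:V_S]=(h_2\cdots h_l)^{r_1}(h_1\cdots h_l)^{r_2+\dots+r_l}=\pi_1$, and the induction closes since $\pi_1\cdot(\pi_2\cdots\pi_l)=\pi_1\cdots\pi_l$.

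The main obstacle is this fibre count. One must verify, via a rank argument, that arbitrary lifts of generating sequences produce genuine standard submodules with the prescribed invariants; pin down with Lemma~\ref{charker} precisely which perturbations fix the generated submodule; and extract a finite answer although $W$ is infinite, which is what forces the passage to $W/V_\ast$ together with the proof that $[W:V_S]$ is constant along a fibre. The auxiliary identity $[K_1(M):K_1(S)]=h_1\cdots h_l$ is the technical linchpin of the argument.
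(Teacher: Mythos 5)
Your argument is correct, and it follows the paper's induction skeleton --- pass to $\ol M=M/K_1(M)$, use $Q_{i-1}(\ol M)\cong Q_i(M)$, and reduce everything to showing that each fibre of $S\ms (S+K_1(M))/K_1(M)$ over a fixed standard $\ol S_0$ contains exactly $\pi_1$ submodules --- but you count that fibre by a genuinely different mechanism. The paper determines $T$ in two stages: first $T+L_1(M)$, which is a complement to $K_1(M)/R$ in $S$ and so corresponds to an element of $\Hom_{P_l(\O_K)}(S/K_1(M),K_1(M)/R)$, giving $h_1^{r_2+\cdots+r_l}$ choices; then, after proving (its Part 2) that $T\cap L_1(M)=L_1(T)$ depends only on $T+L_1(M)$, it counts the $T$ with fixed $T+L_1(M)=U$ via $\Hom_{P_l(\O_K)}(U/L_1(M),L_1(M)/L_1(T))$, giving $(h_2\cdots h_l)^{r_1+\cdots+r_l}$; each stage is a clean bijection, so no overcounting arises. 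You instead parametrise the whole fibre at once by perturbations of lifted generating sequences, identify the fibres of $\Psi$ as cosets of the $S$-dependent subgroups $V_S=L_1(S)^{r_1}\oplus\bigoplus_{j\ge 2}K_1(S)^{r_j}$, and count by descending to the finite quotient modulo a common subgroup $V_\ast$, using $\sum_S[W:V_S]^{-1}=1$ together with your auxiliary identity $[K_1(N):K_1(T)]=g_1\cdots g_m$ to show $[W:V_S]=\pi_1$ for every $S$; since $(h_2\cdots h_l)^{r_1}(h_1\cdots h_l)^{r_2+\cdots+r_l}=h_1^{r_2+\cdots+r_l}(h_2\cdots h_l)^{r_1+\cdots+r_l}$, the two counts agree. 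What your route buys is that it avoids the complement/Hom formalism and the paper's Part~2 rigidity step; what it costs is the overcounting analysis and the index identity, which is essentially the paper's appeal to Lemma~\ref{modseries} for $[L_1(M):L_1(T)]=h_2\cdots h_l$ combined with $[K_1(M):K_1(S)]=h_1\,[L_1(M):L_1(S)]$. Two steps you assert rather than prove --- that every $S$ in the fibre admits a generating sequence lifting the fixed basis of $R_1$ and the fixed $\ol\FF$ (this needs the natural isomorphisms $Q_j(S)\cong Q_{j-1}(\ol S_0)$ for $j\ge 2$ and $\rho_1\colon Q_1(S)\cong R_1$), and that $\ol S$ is standard in $\ol M$ with associated submodules $(R_2,\dots,R_l)$ --- are routine verifications of the same kind the paper itself leaves implicit, so I do not regard them as gaps.
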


\begin{proof}
For $1 \leq i \leq l$ let $\varphi_i \colon K_i(M) \ra Q_i(M)$ be the 
natural epimorphism. Let $\FF_i$ be an arbitrary set of preimages under
$\varphi_i$ of a free generating set of the free $\O_K$-module $R_i \leq 
Q_i(M)$. Since $R_i$ is free, $\FF_i$ generates a free
$\O_K$-submodule $U_i$ of $M$. As in Theorem \ref{standnf}, it
now follows that the $P_l(\O_K)$-module $S$ generated by $\FF_1 \cup \dots 
\cup \FF_l$ is standard. It has finite index in $M$ by Theorem 
\ref{goodexist}(b).

Every standard submodule of finite index in $M$ and associated with 
$(R_1, \dots, R_l)$ arises via this construction.
Hence the number of such submodules depends on the number 
of essentially different ways 
to choose $(\FF_1, \dots, \FF_l)$. Choosing a different free generating
set for $R_i$ does not affect the submodule, but choosing a different set
of preimages under $\varphi_i$ may change the submodule. 

We use induction along the series $M = K_l(M) \geq \ldots \geq K_0(M) = \{0\}$
to count the different possibilities for standard submodules. For the initial
step of this induction, we note that $M/K_{l-1}(M) \cong Q_l(M)$ has only one 
standard submodule associated with $(R_l)$, namely $R_l$ itself. This 
accords with the stated result, since
\[ \pi_l = \frac{h_l^{r_l}}{h_l^{r_l}} = 1.\]

For the induction step we consider $M/K_1(M)$. First note that $M/K_1(M)$ is
an integral $P_l(\O_K)$-module of type $(r_2, \ldots, r_l)$ and it satisfies 
$Q_{i-1}(M/K_1(M)) \cong Q_i(M)$ 
for $2 \leq i \leq l$. We assume by induction that the number of distinct 
standard submodules associated with $(R_2, \ldots, R_l)$ in $M/K_1(M)$ is 
given by $\pi_2 \cdots \pi_l$.
We show that for every standard submodule $S/K_1(M)$ in $M/K_1(M)$ 
associated with $(R_2, \ldots, R_l)$ there exists $\pi_1$ standard 
submodules $T$ in $M$ associated with $(R_1, \ldots, R_l)$ such that 
$T+K_1(M) = S$.  Our proof of the induction step has three parts. 
\medskip

{\bf Part 1.}
We determine the number of possible options for $T + L_1(M)$, where $T$ is 
a standard submodule of $M$ associated with $(R_1, \ldots, R_l)$ such that 
$T+K_1(M) = S$. 

Let $R$ be a full preimage of $R_1$ under 
$\varphi_1 : K_1(M) \ra Q_1(M)$. Then $(T + L_1(M)) \cap
K_1(M) = R$ and $(T + L_1(M)) + K_1(M) = T + K_1(M) = S$. Thus the 
desired submodules $T + L_1(M)$ correspond to the complements in $S$ to the 
section $K_1(M)/R$ and hence, in turn, to $\Hom_{P_l(\O_K)} (S/K_1(M), 
K_1(M)/R)$. Since $S/K_1(M)$ is generated as $P_l(\O_K)$-module by 
$r_2 + \ldots + r_l$ generators, and $[K_1(M):R] = [Q_1(M): R_1] = h_1$, 
the number of such complements is $h_1^{r_2+\ldots+r_l}$.
\medskip

{\bf Part 2.} Let $T$ and $\hat{T}$ be two standard submodules of $M$ 
associated with $(R_1, \ldots, R_l)$ satisfying $T+L_1(M) = 
\hat{T}+L_1(M)$. We show that $T \cap L_1(M) = \hat{T} \cap L_1(M)$. 

Let $(\FF_1, \ldots, \FF_l)$ and $(\hat{\FF}_1, \ldots, \hat{\FF}_l)$ be 
standard generating sequences for $T$ and $\hat{T}$, respectively. Let
$U_i$ and $\hat{U}_i$ be the $\O_K$-submodules generated by $\FF_i$ and 
$\hat{\FF}_i$, respectively. 
Theorem \ref{goodexist} asserts that $T \cap L_1(M) = L_1(T)$ and 
$\hat{T} \cap L_1(M) = L_1(\hat{T})$. Theorem \ref{standnf} yields 
that $L_1(T) = U_2 y \oplus \ldots \oplus U_l y^{l-1}$ and 
$L_1(\hat{T}) = \hat{U}_2 y \oplus \ldots \oplus \hat{U}_l y^{l-1}$. 
As $T + L_1(M) = \hat{T}+L_1(M)$, we can assume that for each 
$f \in U_i$ there exists $t_f \in L_1(M)$ so that $f+t_f \in \hat{U}_i$. 
Since $(f+t_f)y= fy + t_fy = fy$, it follows that $L_1(T) = L_1(\hat{T})$. 
\medskip

{\bf Part 3.}
Building on 
Part 1, we consider a fixed complement $U$ to $K_1(M)/R$ in $S$. 
We count the number of standard submodules $T$ in $M$ associated with 
$(R_1, \ldots, R_l)$ so that $T + L_1(M) = U$. 

Part 2 shows the intersection 
$T \cap L_1(M)$ depends on $U$, but not on $T$. Thus the desired submodules
$T$ correspond to the complements in $U$ to the section $L_1(M)/ L_1(T)$ 
and so, in turn, to $\Hom_{P_l(\O_K)}(U/L_1(M), L_1(M)/L_1(T))$. Since 
$U/L_1(M)$ is generated as $P_l(\O_K)$-module by $r_1 + \ldots + r_l$ 
generators, and $[L_1(M) : L_1(T)] = h_2 \cdots h_l$ by Lemma \ref{modseries}, 
the number of such complements is $(h_2 \cdots h_l)^{r_1 + \ldots + r_l}$.
\medskip

Finally, we combine the results of Parts 1 and 3 and the observation that 
\[ h_1^{r_2 + \dots + r_l} \cdot (h_2 \cdots h_l)^{r_1+ \dots + r_l}
  = \frac{(h_1 \cdots h_l)^{r_1+ \dots + r_l}}{h_1^{r_1}} 
  = \pi_1 \]
to deduce the claim.
\end{proof}

The proof of Theorem \ref{goodcount} is constructive and 
allows us to determine the set of all standard submodules of $M$ associated
with a given collection of free submodules $(R_1, \ldots, R_l)$.

\subsection{Isomorphisms and automorphisms for integral $P_l(\O_K)$-modules}

We now solve the isomorphism and automorphism group problems for integral 
$P_l(\O_K)$-modules. See \cite[Lemmas 15 and 19]{Gru80}.

\begin{theorem}
\label{genisom}
Let $M$ and $\hat{M}$ be two integral $P_l(\O_K)$-modules. Let $N$ be a
  standard submodule in $M$ with associated indices $(h_1,\dotsc,h_l)$ and let $\{ \hat{N}_1, \dots,
\hat{N}_s\}$ be the set of all standard submodules in
  $\hat{M}$ with associated indices $(h_1,\dotsc,h_l)$. Let $c \in \N$ be such that $cM \leq N$. 
Now $M \cong \hat{M}$
as $P_l(\O_K)$-modules if and only if there exists $j \in \{1, \dots, s\}$
and a $P_l(\O_K)$-module isomorphism $\kappa \colon N \ra \hat{N}_j$ with
$\kappa(cM) = c\hat{M}$.
\end{theorem}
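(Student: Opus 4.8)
The plan is to reduce the isomorphism problem for $M$ and $\hat M$ to a search over standard submodules, using the fact that $N$ and the $\hat N_j$ carry all the essential module-theoretic data while sitting at finite index. First I would prove the easy direction: if $\kappa\colon N\to\hat N_j$ is a $P_l(\O_K)$-module isomorphism with $\kappa(cM)=c\hat M$, then define $\sigma\colon M\to\hat M$ by $\sigma(w)=c^{-1}\kappa(cw)$; since $cw\in cM\le N$, this is well-defined, and $\sigma$ is clearly $\O_K$-linear and commutes with $y$, hence a $P_l(\O_K)$-module homomorphism, with inverse $w\mapsto c^{-1}\kappa^{-1}(cw)$ (noting $c\hat M\le\hat N_j$). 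So $M\cong\hat M$. This is the same trick used in the proof of Theorem~\ref{reduct}(a).

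For the forward direction, suppose $\gamma\colon M\to\hat M$ is a $P_l(\O_K)$-module isomorphism. The key point is to track what $\gamma$ does to the standard submodule $N$. Since $\gamma(N)$ is a submodule of $\hat M$ of the same finite index as $N$ in $M$ (because $\gamma$ is an isomorphism of abelian groups), and since $\gamma$ carries the whole structure series $(\Sigma)$ of $M$ to that of $\hat M$ (by its fully invariant construction), $\gamma(N)$ is again a standard submodule of $\hat M$. Moreover $\gamma$ induces isomorphisms $Q_i(M)\cong Q_i(\hat M)$ and carries $R_i(N)\le Q_i(M)$ isomorphically onto $R_i(\gamma(N))\le Q_i(\hat M)$; hence by Theorem~\ref{goodexist}(b) the indices associated with $\gamma(N)$ equal those associated with $N$, namely $(h_1,\dotsc,h_l)$. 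Therefore $\gamma(N)=\hat N_j$ for some $j\in\{1,\dots,s\}$, since the $\hat N_j$ enumerate precisely the standard submodules of $\hat M$ with those associated indices. Now set $\kappa=\gamma|_N\colon N\to\hat N_j$; it is a $P_l(\O_K)$-module isomorphism, and $\kappa(cM)=\gamma(cM)=c\gamma(M)=c\hat M$ because $cM\le N$ and $c\hat M\le\hat N_j$ (the latter as $c\hat M\le c'\hat M$ for suitable normalisation, or directly since $\gamma(cM)=c\hat M\le\gamma(N)=\hat N_j$). This completes the equivalence.

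The main obstacle I expect is justifying that $\gamma(N)$ is \emph{standard} and has the \emph{same associated indices} as $N$ — i.e.\ that ``standard submodule with associated indices $(h_1,\dotsc,h_l)$'' is an isomorphism-invariant notion. This hinges on two facts already available: that the series $(\Sigma)$ and the quotients $Q_i$ are fully invariant under $P_l(\O_K)$-isomorphisms (stated after the definition of $(\Sigma)$), so $Q_i(\gamma(N))\cong Q_i(N)$ is free and $\gamma(N)$ is standard by definition; and that the indices $h_i=[Q_i(M):R_i(N)]$ are preserved because $\gamma$ restricts to a commutative diagram between the embeddings $R_i(N)\hookrightarrow Q_i(M)$ and $R_i(\gamma(N))\hookrightarrow Q_i(\hat M)$. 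Once this invariance is in place, the enumeration hypothesis on $\{\hat N_1,\dots,\hat N_s\}$ does the rest, and the $c^{-1}\gamma(c\,\cdot\,)$ rescaling handles the bookkeeping with $cM$ and $c\hat M$ exactly as in Theorems~\ref{module} and~\ref{reduct}.
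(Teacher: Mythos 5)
Your proposal is correct and follows essentially the same route as the paper: the forward direction restricts $\gamma$ to $N$ after observing that $\gamma(N)$ is a standard submodule of $\hat M$ with the same associated indices (hence equals some $\hat N_j$) and that $\gamma(cM)=c\gamma(M)=c\hat M$, while the converse rescales $\kappa$ via $w\mapsto c^{-1}\kappa(cw)$ exactly as in the paper. Your extra justification of the isomorphism-invariance of ``standard with associated indices $(h_1,\dotsc,h_l)$'' (via the fully invariant series $(\Sigma)$ and the induced maps on the $Q_i$) merely fills in a step the paper asserts without comment, so the two arguments coincide in substance.
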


\begin{proof}
Let $\gamma : M \ra \hat{M}$ be a $P_l(\O_K)$-module isomorphism. 
Now $\gamma(cM) = c \gamma(M) = c \hat{M}$ and $\gamma(N)$ is a standard
  submodule of $\hat{M}$ with associated indices $(h_1,\dotsc,h_l)$. Thus $\gamma(N) = \hat{N}_j$ for some $j \in
\{1, \dots, s\}$.

Conversely, 
let $\kappa : N \ra \hat{N}_j$ be a $P_l(\O_K)$-module isomorphism
with $\kappa(cM) = c\hat{M}$. Define 
$\gamma : M \ra \hat{M} : w \ms c^{-1} \kappa(cw)$. 
Clearly $\gamma$ is a $P_l(\O_K)$-module isomorphism.
\end{proof}

Theorem~\ref{genisom} reduces the isomorphism problem for integral 
$P_l(\O_K)$-modules
to the construction of generators for $A = \Aut_{P_l(\O_K)}(N)$ and 
the determination
of arbitrary $P_l(\O_K)$-module isomorphisms 
$\delta_j \colon N \to \hat{N_j}$, $j \in \{1,\dotsc, s\}$.
If $A$ and $\delta_j$ are given, then the orbit
$O_j = \{ \alpha(cM) \mid \alpha \in A\}$ can be constructed.
Now $M$ and $\hat{M}$ are isomorphic if and only if there exists 
$j \in \{1,\dotsc,s\}$ such that $c\hat{M}$ is contained in
$\{ \delta_j(o) \mid o \in O_j\}$.

Next, we construct $\Aut_{P_l(\O_K)}(M)$ for an
integral $P_l(\O_K)$-module $M$. Let $(h_1,\dotsc,h_l) \in \N^l$ be such that the set
$\{N_1, \dots, N_s\}$ of standard submodules of $M$ with associated indices $(h_1,\dotsc,h_l)$ is
not empty. Let $c \in \N$ so that $cM \leq N_1$.
Let $S$ be the largest subset of $\{2, \dots, s\}$ having the property
that for each $i \in S$ there exists a $P_l(\O_K)$-module isomorphism 
$\epsilon_i: N_1 \ra N_i$ with $\epsilon_i(cM) = cM$. 
Theorem~\ref{genisom} implies that each 
$\epsilon_i$ extends to an element (which we also call $\epsilon_i$)
of $\Aut_{P_l(\O_K)}(M)$. 
This allows us to define
\[ \Pi = \langle \epsilon_i \mid i \in S \rangle \leq
         \Aut_{P_l(\O_K)}(M).\]
Let $\MySigma$ denote a generating set for the stabiliser of $c M$
in $A := \Aut_{P_l(\O_K)}(N_1)$.  Theorem~\ref{genisom} implies that 
each element of $\MySigma$ extends to an element of $\Aut_{P_l(\O_K)}(M)$. 
Hence we can also consider $\MySigma$ as a subset of $\Aut_{P_l(\O_K)}(M)$.

\begin{theorem}
\label{genauto}
If $M$ is an integral $P_l(\O_K)$-module, then $\Aut_{P_l(\O_K)}(M) =
\langle \Pi, \MySigma \rangle$.
\end{theorem}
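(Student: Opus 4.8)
The plan is to show the two inclusions. The containment $\langle \Pi, \MySigma \rangle \leq \Aut_{P_l(\O_K)}(M)$ is immediate, since by construction both $\Pi$ and $\MySigma$ are (identified with) subgroups of $\Aut_{P_l(\O_K)}(M)$ via the extension mechanism of Theorem~\ref{genisom}. So the work is entirely in the reverse inclusion: given an arbitrary $\alpha \in \Aut_{P_l(\O_K)}(M)$, express it as a product of elements of $\Pi$ and $\MySigma$.

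First I would exploit the fact that $N_1$ is a standard submodule with a fixed tuple of associated indices $(h_1,\dotsc,h_l)$, and that these indices are isomorphism invariants (Theorem~\ref{goodexist}(b)). Since $\alpha$ is an automorphism of $M$, the image $\alpha(N_1)$ is again a standard submodule of $M$ whose associated indices equal those of $N_1$ — the argument is the one used in the first paragraph of the proof of Theorem~\ref{genisom}. Hence $\alpha(N_1) = N_k$ for some $k \in \{1,\dotsc,s\}$. Moreover $\alpha(cM) = c\alpha(M) = cM$, so the restriction of $\alpha$ is a $P_l(\O_K)$-module isomorphism $N_1 \ra N_k$ carrying $cM$ to $cM$; by definition of $S$ this forces $k \in S \cup \{1\}$, and in either case there is an element $\epsilon_k \in \Pi$ (with $\epsilon_1 := \id$) whose restriction to $N_1$ is an isomorphism $N_1 \ra N_k$ fixing $cM$. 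Replacing $\alpha$ by $\epsilon_k^{-1}\alpha$, I may now assume $\alpha(N_1) = N_1$.

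Next, with $\alpha$ stabilising $N_1$, its restriction $\alpha|_{N_1}$ lies in $A = \Aut_{P_l(\O_K)}(N_1)$ and fixes $cM$, so $\alpha|_{N_1}$ belongs to the stabiliser of $cM$ in $A$, which is generated by $\MySigma$. Thus there is a word $\sigma$ in the elements of $\MySigma$ with $\sigma|_{N_1} = \alpha|_{N_1}$. Now consider $\sigma^{-1}\alpha \in \Aut_{P_l(\O_K)}(M)$: it restricts to the identity on $N_1$. The final step is to conclude that an automorphism of $M$ which is the identity on $N_1$ is the identity on $M$; this is exactly the injectivity of the restriction map $\Aut_{P_l(\O_K)}(M) \ra \Aut_{P_l(\O_K)}(N_1)$, which holds because $N_1$ has finite index in $M$ (an endomorphism of the free abelian group $M = \Z^n$ is determined by its action on a finite-index subgroup, since $M$ embeds in $\Q \otimes_\Z N_1$). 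Hence $\sigma^{-1}\alpha = \id$, i.e. $\alpha = \epsilon_k \sigma \in \langle \Pi, \MySigma\rangle$, completing the proof.

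The main obstacle is bookkeeping rather than a genuine mathematical difficulty: one must be careful that the extension of $\epsilon_i$ and of the elements of $\MySigma$ from $N_1$ to all of $M$ (supplied by Theorem~\ref{genisom} via $w \ms c^{-1}\kappa(cw)$) is compatible with composition, so that a word in the extended generators restricts on $N_1$ to the corresponding word in $A$; this is where one needs that the restriction map $\Aut_{P_l(\O_K)}(M)\ra\Aut_{P_l(\O_K)}(N_1)$ is an injective group homomorphism, so the extension is the unique preimage and is automatically multiplicative. Once that point is nailed down, the three reduction steps — peel off an $\epsilon_k \in \Pi$ to fix $N_1$, then peel off a word in $\MySigma$ to act trivially on $N_1$, then invoke injectivity — assemble directly.
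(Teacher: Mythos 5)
Your argument is correct and follows essentially the same route as the paper's (much terser) proof: identify $\alpha(N_1)$ as some $N_k$ with the same associated indices, peel off $\epsilon_k \in \Pi$, and absorb the remainder into $\langle\MySigma\rangle$ via the stabiliser of $cM$ in $\Aut_{P_l(\O_K)}(N_1)$. The extra details you supply — that $\alpha(cM)=cM$ forces $k \in S\cup\{1\}$, and that injectivity of the restriction map (from the finite index of $N_1$ in $M$) makes the extension multiplicative and pins down $\alpha$ — are exactly the points the paper leaves implicit, so this is the same proof with the bookkeeping made explicit.
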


\begin{proof}
If $\alpha \in \Aut_{P_l(\O_K)}(M)$, then $\alpha(N_1) = N_j$ for some
$j$. If $j = 1$, then $\alpha \in \langle \MySigma \rangle$. If $j \neq
1$, then $\alpha \circ \epsilon_j^{-1} \in \langle \MySigma \rangle$. In
summary, $\alpha \in \langle \Pi, \MySigma \rangle$.
\end{proof}

Theorem~\ref{genauto} thus reduces the problem of constructing generators 
for $\Aut_{P_l(\O_K)}(M)$ to
the computation of standard submodules, isomorphisms between them, and
generators $\MySigma$ for $\Stab_A(cM)$.
The latter are obtained from $A$ via Schreier generators.

\section{Algorithms for modules over truncated polynomial rings}
\label{module-algorithms}

\subsection{Constructing standard submodules}
\label{algostand}

Let $M$ be an integral $P_l(\O_K)$-module. We summarise algorithms 
to construct one or all standard submodules of $M$ with fixed associated 
indices.
See Section \ref{standfind} for theoretical background. 
We denote the torsion 
subgroup of $Q_i(M)$ by $T_i(M)$ and its quotient by $\hat{Q}_i(M) =
Q_i(M)/T_i(M)$. Let $\phi_i : Q_i(M) \ra \hat{Q}_i(M)$ be the natural 
epimorphism and recall that $r_i$ is the rank of 
$\hat{Q}_i(M)$ as $\O_K$-module.

\mbox{} \\
{\bf Algorithm I.1:}\\
Let $M$ be an integral $P_l(\O_K)$-module and $(h_1, \ldots, h_l)$ a 
sequence of natural numbers. Determine all sequences $(R_1, \ldots, R_l)$, 
where $R_i$ is a free $\O_K$-submodule of index $h_i$ in $Q_i(M)$.
\begin{items}
\item[$\bullet$]
For $i \in \{1, \ldots, l\}$ do:
\begin{items}
\item[-]
Let $t_i = |T_i(M)|$. If $t_i \nmid h_i$, then return the empty set.
\item[-]
Initialise $\SS_i$ as the empty list.
\item[-]
Construct the set $\R$ of all free $\O_K$-submodules of index $h_i/t_i$ in
$\hat{Q}_i(M)$ using Remark~\ref{allfree}. 
\item[-]
For each $R \in \R$ with $\O_K$-module basis $B = (b_1, \ldots, b_{r_i})$, 
say, determine the $\O_K$-modules generated by $(b_1 + c_1, \ldots, 
b_{r_i} + c_{r_i})$ for all sequences $(c_1, \ldots, c_{r_i})$ with
entries in $T_i(M)$ and append these to $\SS_i$.
\end{items}
\item[$\bullet$]
Return the set of sequences $(R_1, \ldots, R_l)$ with $R_i \in \SS_i$ 
for $1 \leq i \leq l$.
\end{items}

\smallskip
\noindent
We nox fix a sequence $(R_1, \ldots, R_l)$ 
where $R_i$ is a free $\O_K$-submodule of finite index in $Q_i(M)$ 
for $1 \leq i \leq l$ and determine all standard submodules 
associated with this sequence.


\mbox{} \\
{\bf Algorithm I.2:} \\
Let $M$ be an integral $P_l(\O_K)$-module and for $1 \leq i \leq l$ let 
$R_i$ be a free $\O_K$-submodule of $Q_i(M)$. Determine all standard 
submodules of $M$ associated with $(R_1, \dots, R_l)$.
\begin{items}
\item[$\bullet$]
Find one standard submodule $L$ in $M$ associated with $(R_1, \dots, R_l)$ 
using Remark~\ref{minimalstandard}. 
\item[$\bullet$]
Initialise $\L$ as the list consisting of $L$.
\item[$\bullet$]
For $i$ in the list $(l, \dots, 1)$ do:
\begin{items}
\item[-]
Let $R$ be the full preimage of $R_i$ under $\varphi_i : K_i(M) \ra Q_i(M)$ and $T$ a transversal of $R$ in $K_i(M)$.
\item[-]
Initialise $\SS$ as the empty list.
\item[-]
For $S$ in $\L$ do:
\begin{items}
\item[--]
Let $(\FF_1, \ldots, \FF_l)$ be a standard generating sequence of $S$.
\item[--]
For $(t_{i+1}, \ldots, t_l) \in T^{r_{i+1}} \times \dots \times T^{r_l}$ compute the $P_l(\O_K)$-module generated by 
$(\FF_1, \ldots, \FF_i, \FF_{i+1}+t_{i+1}, \ldots, \FF_l + t_l)$ and add it to $\SS$.
\end{items}
\item[-]
Replace $\L$ by $\SS$.
\end{items}
\item[$\bullet$]
For $i$ in the list $(l, \dots, 1)$ do:
\begin{items}
\item[-]
Initialise $\SS$ as the empty list.
\item[-]
For $S$ in $\L$ do:
\begin{items}
\item[--]
Let $T$ be a transversal of $L_i(S)+K_{i-1}(M)$ in $L_i(M)$.
\item[--]
Let $(\FF_1, \ldots, \FF_l)$ be a standard generating sequence of $S$.
\item[--]
For $(t_i, \ldots, t_l) \in T^{r_i} \times \dots \times T^{r_l}$ do compute the $P_l(\O_K)$-module generated by 
$(\FF_1, \ldots, \FF_{i-1}, \FF_i+t_i, \ldots, \FF_l + t_l)$ and add it to $\SS$.
\end{items}
\item[-]
Replace $\L$ by $\SS$.
\end{items}
\item[$\bullet$]
Return $\L$.
\end{items}

\noindent
Recall that each standard submodule $S$ of finite index in an integral $P_l(\O_K)$-module
$M$ induces a sequence $(R_1, \ldots, R_l)$ with $R_i$ a free $\O_K$-submodule
of $Q_i(M)$ for $1 \leq i \leq l$. The sequence $(h_1, \ldots, h_l)$
defined by $h_i = [Q_i(M) : R_i]$ are the associated indices of $S$ in $M$.

\mbox{} \\
{\bf Algorithm I.3:} \\
Let $M$ be an integral $P_l(\O_K)$-module and $(h_1, \ldots, h_l)$ a
sequence of natural numbers. Determine all standard submodules with
associated indices $(h_1, \ldots, h_l)$ in $M$.
\begin{items}
\item[$\bullet$]
Compute all sequences $(R_1, \ldots, R_l)$ where $R_i$ is a free 
$\O_K$-submodule of index $h_i$ in $Q_i(M)$ using Algorithm I.1.
\item[$\bullet$]
For each sequence $(R_1, \ldots, R_l)$ compute all associated standard 
submodules of $M$ using Algorithm I.2.
\item[$\bullet$]
Return the full list of all computed standard submodules.
\end{items}

\subsection{Isomorphisms and automorphisms of integral $P_l(\O_K)$-modules}

We summarise algorithms to solve the isomorphism and
automorphism group problems for integral $P_l(\O_K)$-modules.

\mbox{} \\
{\bf Algorithm II.1:} \\
Let $M$ and $\hat{M}$ be two integral $P_l(\O_K)$-modules.
Decide if $M$ and $\hat{M}$ are isomorphic as $P_l(\O_K)$-modules and if
so, then determine an isomorphism.
\begin{items}
\item[$\bullet$]
If the types of $M$ and $\hat M$ differ, then return false.
\item[$\bullet$]
  Determine a standard submodule $N$ of minimal index using 
Remark~\ref{minimalstandard}. 
\item[$\bullet$]
Choose $c \in \N$ such that $cM \leq N$ and determine the indices
$(h_1, \ldots, h_l)$ associated with $N$. 
\item[$\bullet$]
Determine all standard submodules $\hat{N}_1, \dots, \hat{N}_s$ with 
associated indices $(h_1, \ldots, h_l)$ in $\hat{M}$ using Algorithm~I.3.
\item[$\bullet$]
Compute generators for $\Aut_{P_l(\O_K)}(N)$ using Remark~\ref{autgenremark}.
\item[$\bullet$]
Compute the orbit $O$ of $cM$ under $\Aut_{P_l(\O_K)}(N)$; 
since $O$ is finite it can be listed explicitly. 
Determine a transversal $T$ for $O$.
\item[$\bullet$]
Consider each $i \in \{1, \dots, s\}$ in turn:
\begin{items}
\item[-]
Use Remark \ref{standard-mod-isom} to find a $P_l(\O_K)$-isomorphism
$\gamma : N \ra \hat{N}_i$.
\item[-]
Determine $\{ \gamma(w) \mid w \in O \}$ and decide if $c\hat{M}$ is 
contained in this set. 
\item[-]
If not, then consider next $i$.
\item[-]
Take $\tau \in T$ corresponding to $w \in O$ with
$\gamma(w) = c \hat{M}$. Note that $\gamma(w) = \gamma(\tau(cM))$.
\item[-]
Lift $\gamma \tau : c M \ra c \hat{M}$ to $\sigma : M \ra \hat{M}$ using
division by $c$.
\item[-]
Return $\sigma$.
\end{items}
\item[$\bullet$]
Return fail. (There is no isomorphism $M \ra \hat{M}$.)
\end{items}

\mbox{} \\
{\bf Algorithm II.2:} \\
Let $M$ be an integral $P_l(\O_K)$-module.
Determine generators for $\Aut_{P_l(\O_K)}(M)$.
\begin{items}
\item[$\bullet$]
Determine a standard submodule $N$ of minimal index using 
Remark~\ref{minimalstandard}. 
\item[$\bullet$]
Choose $c \in \N$ such that $cM \leq N$ and determine the indices
$(h_1, \ldots, h_l)$ associated with $N$. 
\item[$\bullet$]
Determine all standard submodules $N_1, \dots, N_s$ with associated indices
$(h_1, \ldots, h_l)$ in $M$ using Algorithm~I.3. Assume $N = N_1$.
\item[$\bullet$]
Compute generators for $\Aut_{P_l(\O_K)}(N)$ using Remark~\ref{autgenremark}.
\item[$\bullet$]
Compute the orbit $O$ of $cM$ under $\Aut_{P_l(\O_K)}(N)$; 
since $O$ is finite it can be listed explicitly. 
Determine a transversal $T$ for $O$.
\item[$\bullet$]
Consider each $i \in \{2, \dots, s\}$ in turn:
\begin{items}
\item[-]
Use Remark \ref{standard-mod-isom} to determine a $P_l(\O_K)$-isomorphism
$\gamma : N \ra N_i$.
\item[-]
Determine $\{ \gamma(w) \mid w \in O \}$ and decide if $cM$ is contained 
in this set. 
\item[-]
It not, then consider next $i$.
\item[-]
Take $\tau \in T$ corresponding to $w \in O$ with $\gamma(w) = c M$
and let $\epsilon_i = \gamma \tau$.
\end{items}
\item[$\bullet$]
Let $\Pi$ be the list of determined isomorphisms $\epsilon_i$.
\item[$\bullet$]
Compute generators $\MySigma$ for the stabiliser of $cM$ in
$\Aut_{P_l(\O_K)} (N)$.
\item[$\bullet$]
Modify the elements of $\Pi$ and $\MySigma$: determine their restriction to
$cM$ and then lift this to an isomorphism $M \ra M$ using division by $c$.
\item[$\bullet$]
Return the generating set $\Pi \cup \MySigma$ for $\Aut_{P_l(\O_K)}(M)$.
\end{items}

\subsection{Isomorphisms and automorphisms of integral $P_l(\O)$-modules}

Let $M$ and $\hat M$ be two integral $P_l(\O)$-modules. Using
Theorem~\ref{reduct} and the previous section, we summarise algorithms 
to decide if $M\cong \hat M$ and to compute a finite 
generating set for $\Aut_{P_l(\O)}(M)$.

\mbox{} \\
{\bf Algorithm III.1:} \\
Decide if $M$ and $\hat{M}$ are isomorphic as $P_l(\O)$-modules and if
so, then determine an isomorphism.
\begin{items}
\item[$\bullet$]
Determine the unique maximal $P_l(\O_K)$-submodules $L$ and $\hat L$ of $M$
and $\hat M$ respectively (see Remark~\ref{findoksub}).
\item[$\bullet$]
If $[M:L] \neq [\hat{M} : \hat{L}]$ then return false.
\item[$\bullet$]
Choose $c \in \N$ such that $cM \leq L$. 
\item[$\bullet$]
Use Algorithm~II.1 to either construct a $P_l(\O_K)$-module isomorphism 
$\gamma \colon L \to \hat{L}$, or to conclude that no isomorphism exists 
and return false.
\item[$\bullet$]
Compute a finite generating set for $\Aut_{P_l(\O_K)}(L)$ using 
Algorithm~II.2.
\item[$\bullet$]
Compute the orbit $O$ of $cM$ under $\Aut_{P_l(\O_K)}(L)$; since $O$ is 
finite it can be listed explicitly.  Determine a transversal $T$ for $O$.
\item[$\bullet$]
Determine $\{ \gamma(w) \mid w \in O \}$ and decide if $c\hat M$ is 
contained in this set.  If not, then return false.
\item[$\bullet$]
Take $\tau \in T$ with $c \hat{M} = \gamma(\tau(cM))$. 
\item[$\bullet$]
Extend $\gamma \tau : cM \ra c\hat{M}$ to an isomorphism $M \ra \hat{M}$ 
using division by $c$ and return this isomorphism.
\end{items}

\mbox{} \\
{\bf Algorithm III.2:} \\
Determine generators for $\Aut_{P_l(\O)}(M)$.
\begin{items}
\item[$\bullet$]
Determine the unique maximal $P_l(\O_K)$-submodule $L$ of $M$.
\item[$\bullet$]
Choose $c \in \N$ such that $cM \leq L$. 
\item[$\bullet$]
Compute generators $\MySigma$ for the stabiliser of $cM$ in 
$\Aut_{P_l(\O_K)}(L)$ using Algorithm II.2.
\item[$\bullet$]
Extend each element in $\MySigma$ from $c M \ra cM$ to $M \ra M$ 
using division by $c$.
\item[$\bullet$]
Return the resulting set.
\end{items}

\section{The integral conjugacy and centraliser problems}

In Section~\ref{transl} we translated 
the conjugacy and centraliser problem to module theory
over truncated polynomial rings. 
We now formulate the two algorithms which 
exploit the module algorithms of Section \ref{module-algorithms}
to solve these problems.  

\mbox{} \\ 
{\bf Main Algorithm 1: Conjugacy algorithm.} \\ 
Decide if $T, \hat{T} \in \GL(n, \Q)$ are conjugate in $\GL(n,\Z)$ and if
so, then determine a conjugating element. 
\begin{items}
\item[$\bullet$]
Decide if $T$ and $\hat{T}$ are conjugate in $\GL(n,\Q)$; 
if not, then return false.
\item[$\bullet$]
Choose $k$ so that $kT$ and $k \hat{T}$ have integral Jordan-Chevalley
decompositions; 
replace $T$ and $\hat{T}$ by $kT$ and~$k \hat{T}$.
\item[$\bullet$]
Let $T = S + U$ and $\hat{T} = \hat{S} + \hat{U}$ be the Jordan-Chevalley
decompositions of $T$ and $\hat{T}$.
\item[$\bullet$]
Let $l \in \N$ be minimal with $U^l = \hat{U}^l = 0$.
\item[$\bullet$]
Let $P(x) \in \Z[x]$ be the minimal polynomial of $S$ and $\hat{S}$.
\item[$\bullet$]
Factorise $P(x) = P_1(x) \cdots P_r(x)$ with $P_i(x) \in \Z[x]$ monic
and irreducible over $\Q[x]$.
\item[$\bullet$]
Compute ${p}_i = P(x) / P_i(x)$ for $1 \leq i \leq r$.
\item[$\bullet$]
For $i \in \{1, \dots, r\}$ do:
\begin{items}
\item[-]
Compute $M_i = M ({p}_i(S))$ and $\hat{M}_i = \hat{M} ({p}_i(\hat{S}))$.
\item[-]
Let $\O_i = \Z[x]/(P_i)$.  Use Algorithm III.1 to either 
construct a $P_l(\O_i)$-module isomorphism $\gamma_i : M_i \ra \hat{M}_i$, 
or to conclude that no isomorphism exists and return false. 
\item[-]
Compute generators for $A_i = \Aut_{P_l(\O_i)}(M_i)$ using Algorithm III.2.
\end{items}
\item[$\bullet$]
Compute $D = M_1 + \dotsb + M_r$ and $\hat D = \hat M_1 + \dotsb + \hat M_r$.
\item[$\bullet$]
Choose $c \in \N$ such that $cM \leq D$.
\item[$\bullet$]
Combine $\gamma_1, \dots, \gamma_r$ to obtain $\gamma : D \ra \hat{D}$.
\item[$\bullet$]
Construct $\Aut_{P_l(R)}(D) = A_1 \times \dots \times A_r$.
\item[$\bullet$]
Compute the orbit $O$ of $cM$ under the action of $\Aut_{P_l(R)}(D)$;
since $O$ is finite it can be listed explicitly. 
Determine a transversal $T$ for $O$.
\item[$\bullet$]
Determine $\{ \gamma(w) \mid w \in O \}$ and decide if $c \hat{M}$ is
contained in this set. 
\item[$\bullet$]
If not, then return false.
\item[$\bullet$]
Take $\tau \in T$ with $c \hat{M} = \gamma(\tau(cM))$. 
\item[$\bullet$]
Extend $\gamma \tau : cM \ra c \hat{M}$ to an isomorphism $M \ra \hat{M}$ 
using division by $c$ and return this isomorphism.
\end{items}

\mbox{} \\ 
{\bf Main Algorithm 2: Centraliser algorithm.}  \\
Determine generators for $C_\Z(T)$ for $T \in \GL(n,\Q)$.
\begin{items}
\item[$\bullet$]
Choose $k$ so that $kT$ has an integral Jordan-Chevalley decomposition; 
replace $T$ by~$kT$.
\item[$\bullet$]
Let $T = S + U$ be the Jordan-Chevalley decomposition of $T$.
\item[$\bullet$]
Let $l \in \N$ be minimal with $U^l = 0$.
\item[$\bullet$]
Let $P(x) \in \Z[x]$ be the minimal polynomial of $S$.
\item[$\bullet$]
Factorise $P(x) = P_1(x) \cdots P_r(x)$ with $P_i(x) \in \Z[x]$ monic
and irreducible over $\Q[x]$.
\item[$\bullet$]
Compute ${p}_i = P(x) / P_i(x)$ for $1 \leq i \leq r$.
\item[$\bullet$]
For $i \in \{1, \dots, r\}$ do: 
\begin{items}
\item[-]
Compute $M_i = M ({p}_i(S))$.
\item[-]
Compute generators for $A_i = \Aut_{P_l(\O_i)}(M_i)$ using Algorithm III.2. 
\end{items}
\item[$\bullet$]
Compute $D = M_1 + \dotsb + M_r$.
\item[$\bullet$]
Choose $c \in \N$ such that $cM \leq D$.
\item[$\bullet$]
Construct $\Aut_{P_l(R)}(D) = A_1 \times \dots \times A_r$.
\item[$\bullet$]
Compute generators $\MySigma$ for the stabiliser of $cM$ in $\Aut_{P_l(R)}(D)$.
\item[$\bullet$]
Extend each element in $\MySigma$ from $c M \ra cM$ to $M \ra M$ using division 
by $c$.
\item[$\bullet$]
Return $\MySigma$.
\end{items}

\section{Implementation and performance}

We have implemented our algorithms in {\sc Magma}. We believe that 
this is the first
implementation which solves the integral conjugacy problem for 
arbitrary elements of $\GL(n,\Q)$.
Note that if $N, N' \in M_n(\Q)$ are nilpotent matrices, then $N$ are $N'$ 
are conjugate in $M_n(\Q)$ 
if and only if the invertible matrices $I + N, I + N' \in 
\GL(n, \Q)$ are conjugate. Thus we can also solve 
the integral conjugacy problem for nilpotent matrices.

To establish that a 
module defined over the maximal order of a number field is not free is a hard
problem, both in theory and practice.
Under the assumption of the generalised Riemann hypothesis (GRH),
there exist fast practical algorithms to solve this problem.
Our implementation allows us optionally to assume GRH. 
Note that a positive answer is always verifiable (independent of GRH), 
since the algorithm returns a conjugating element.

\subsection{Some examples}

The practical performance of our algorithms depends heavily on the 
structure of the input matrices. The limitations are not related to the 
dimension: our implementations sometimes work readily 
for ``random" elements of $\GL(100, \Q)$ but fail for elements of $\GL(10,\Q)$. 
The following computations were carried out on an Intel E5-2643 with 3.40GHz
and {\sc Magma} version V2.23-3 assuming GRH.

\begin{example}
Consider the conjugate matrices 
\[
\left(\begin{smallmatrix}
 0 &  1 &  0 & 0\cr
-4 &  0 &  0 & 0\cr
 0 &  0 &  0 & 1\cr
 0 &  0 & -4 & 0
\end{smallmatrix}\right), \quad
\left(\begin{smallmatrix}
 0 & 1 &  4 &  0\cr
-4 & 0 &  0 & -4\cr
 0 & 0 &  0 &  1\cr
 0 & 0 & -4 &  0
\end{smallmatrix}
\right).
\]
Our implementation takes $1$ second to find a conjugating matrix.
\end{example}

\begin{example}
Consider the conjugate matrices 
\[
\left(
  \begin{smallmatrix}
-14 & -4 &  0 &  0 & -1 &  1 &  0 &  0 &  0 &  0\cr
 -7 & -2 &  0 &  0 &  0 &  0 &  1 &  0 &  0 &  0\cr
 -3 & -1 &  0 &  0 &  0 &  0 &  0 &  1 &  0 &  0\cr
  0 &  0 & -1 &  0 &  0 &  0 &  0 &  0 &  1 &  0\cr
  0 &  0 &  0 & -1 &  0 &  0 &  0 &  0 &  0 &  1\cr
  0 &  0 &  0 &  0 &  0 &-14 & -4 &  0 &  0 & -1\cr
  0 &  0 &  0 &  0 &  0 & -7 & -2 &  0 &  0 &  0\cr
  0 &  0 &  0 &  0 &  0 & -3 & -1 &  0 &  0 &  0\cr
  0 &  0 &  0 &  0 &  0 &  0 &  0 & -1 &  0 &  0\cr
  0 &  0 &  0 &  0 &  0 &  0 &  0 &  0 & -1 &  0
\end{smallmatrix}
\right),
\left(
\begin{smallmatrix}
-9 & 9 & 0& -1&  0 & 0 & 0 & 0&  0& -7\cr
 0 & 0 & 0& -1&  0 & 0 & 0 & 0&  0&  0\cr
-4 & 4 & 0&  0&  0 & 0 & 0 & 0&  0& -3\cr
 0 & 0 &-1&  0&  0 & 0 & 0 & 0&  0&  0\cr
 0 & 0 & 0&  0& -7 &-9 & 1 & 0&  0&  0\cr
-1 & 1 & 0&  0& -7 &-9 & 0 & 0&  0&  0\cr
 9 &-7 & 0&  0&  0 & 0 & 0 & 0&  1&  6\cr
 0 & 0 & 1&  0&  3 & 4 & 0 & 0&  0&  0\cr
-9 & 8 & 0& -1&  0 & 0 & 0 & 1&  0& -7\cr
-9 & 8 & 0&  0&  0 & 0 & 0 & 0&  0& -7
\end{smallmatrix}
\right).
  \]
The minimal polynomial is $(x^5 + 16x^4 - 3x + 1)^2$. 
Our implementation takes $8$ seconds to find a conjugating matrix.
\end{example}
\begin{example}
Consider the conjugate matrices: 
\[
  \left(
 \begin{smallmatrix}
 0&  1&  0&  0 & 0&  0&  0&  0&  0&  0&  0&  0&  0&  0\cr
 0&  0&  1&  0 & 0&  0&  0&  0&  0&  0&  0&  0&  0&  0\cr
 0&  0&  0&  1 & 0&  0&  0&  0&  0&  0&  0&  0&  0&  0\cr
 0&  0&  0&  0 & 1&  0&  0&  0&  0&  0&  0&  0&  0&  0\cr
 0&  0&  0&  0 & 0&  1&  0&  0&  0&  0&  0&  0&  0&  0\cr
 0&  0&  0&  0 & 0&  0&  1&  0&  0&  0&  0&  0&  0&  0\cr
 0&  0&  0&  0 & 0&  0&  0&  1&  0&  0&  0&  0&  0&  0\cr
 0&  0&  0&  0 & 0&  0&  0&  0&  1&  0&  0&  0&  0&  0\cr
 0&  0&  0&  0 & 0&  0&  0&  0&  0&  1&  0&  0&  0&  0\cr
 0&  0&  0&  0 & 0&  0&  0&  0&  0&  0&  1&  0&  0&  0\cr
 0&  0&  0&  0 & 0&  0&  0&  0&  0&  0&  0&  1&  0&  0\cr
 0&  0&  0&  0 & 0&  0&  0&  0&  0&  0&  0&  0&  1&  0\cr
 0&  0&  0&  0 & 0&  0&  0&  0&  0&  0&  0&  0&  0&  1\cr
-2& -2& -2& -2 &-1& -1& -1& -1&  0&  0& -2&  0&  0&  0
\end{smallmatrix}
\right),
\left(
\begin{smallmatrix}
-21 & -11 &  12 &   9 &  15 &   8 & -22 & -23 &   2 &   6 &   8 &  -2 &   2 &  -8\cr
 32 &  17 & -22 & -10 & -18 & -14 &  28 &  35 &  -5 &  -6 & -13 &   4 &  -1 &  13\cr
  1 &   4 & -17 &   6 &  11 & -12 &  -6 &  18 & -16 &   8 &  -7 &   7 &   4 &   9\cr
 21 &  13 & -24 & -11 & -19 & -21 &  33 &  48 & -15 &  -4 & -15 &   7 &  -1 &  18\cr
 14 &   8 & -16 &  -2 &  -2 & -11 &  10 &  23 &  -9 &   0 &  -9 &   5 &   0 &   9\cr
 -1 &   8 & -14 &  -3 &  13 &  -9 &  -2 &  21 & -18 &   6 &  -7 &   8 &   0 &   9\cr
 29 &  14 & -13 & -15 & -27 & -10 &  36 &  31 &   1 & -11 & -10 &   1 &  -4 &  10\cr
-26 &  -8 &  -8 &  10 &  21 & -12 & -16 &  14 & -25 &  14 &  -3 &   9 &   5 &   8\cr
 -8 &  -6 &  -1 &  -9 & -31 & -14 &  34 &  31 &  -8 &  -7 &  -4 &   1 &  -2 &  10\cr
-19 &  -8 &  11 & -15 & -39 &  -8 &  38 &  24 &  -4 &  -9 &   2 &  -2 &  -3 &   7\cr
-35 & -25 &  30 &   3 & -11 &  11 &  -2 & -27 &  14 &  -4 &  14 &  -9 &  -2 & -13\cr
-19 & -21 &  32 &  -6 & -34 &  11 &  21 & -21 &  23 & -14 &  13 & -13 &  -5 & -12\cr
 10 &   5 &   3 &  11 &  31 &  16 & -36 & -36 &  11 &   7 &   5 &  -2 &   3 & -12\cr
 12 &   7 &  -7 & -13 & -27 &  -7 &  30 &  24 &   2 & -11 &  -5 &  -1 &  -5 &   6
\end{smallmatrix}
\right).
\]
The minimal polynomial is $(x^4 + 2)(x^{10} + x^3 + x^2 + x + 1)$.
Our implementation takes $18$ seconds to find a conjugating matrix.
\end{example}

\begin{example} 
Consider 
\[T = \left(
\begin{matrix}
-5 &  8 & -5 \cr
 4 & -7 &  5 \cr
 1 & -2 &  2
\end{matrix}
\right).
\]
Our implementation shows in $0.3$ seconds that
\[ C_\Z(T) = \langle 
\left(\begin{matrix}
 860 & 1206 & -975\cr
 603 & 1001 & -795\cr
 195 &  318 & -253
\end{matrix}\right),
\left(\begin{matrix}
 4 &  6 & -5\cr
 3 &  5 & -5\cr
 1 &  2 & -3
\end{matrix}\right),
\left(\begin{matrix}
-1 & 0 & 0\cr
0 & -1 & 0\cr
0 & 0 & -1
\end{matrix}\right)\rangle.
\]
\end{example}

\subsection{Comparison with other implementations}

Kirschmer implemented in {\sc Magma} the algorithms of \cite{OPSc1998}
which solve the integral conjugacy and centraliser problems for matrices
of finite order. By Maschke's theorem, such matrices are semisimple. 
While these algorithms work well for small matrices with small entries, 
they are very sensitive to entry size. For example,
his implementation  took 380 seconds and ours 2 seconds
to decide that the following matrices are conjugate: 
\[\left(
\begin{smallmatrix}
 2 &   1 &  -1  & -1 &   2\cr
 6 &  -2 &   4  &  3 &   2\cr
10 &  -5 &   4  &  5 & -13\cr
22 &   6 &  -2  & -4 &  23\cr
-1 &   0 &   0  &  0 &   0
\end{smallmatrix}
\right), \quad 
\left(\begin{smallmatrix}
  85 &   -89 &  -167 &    22 &    -2\cr
9480 &  9317 & 17095 &  -307 &  -214\cr
5233 & -5146 & -9444 &   180 &   116\cr
1045 & -1028 & -1887 &    38 &    23\cr
  52 &   -47 &   -84 &   -10 &     4
\end{smallmatrix}
\right).
\]


Husert implemented in {\sc Magma} his algorithm \cite{Hus16} which solves
the integral conjugacy problem for rational matrices which have an 
irreducible characteristic polynomial or are nilpotent.
Also Marseglia implemented in {\sc Magma}
his algorithms \cite{Mars2018} for the case of 
squarefree characteristic polynomial.
Both implementations usually outperform our general-purpose method. 
The developers have kindly allowed us to incorporate their code 
into our implementation.

\subsection{Practical limitations}

We identify various limitations to our algorithms and then discuss them 
in more detail.
\begin{items}
\item[(1)] 
Constructing the maximal order and ideal class group of certain number 
fields is hard. These are used to decide if  an integral 
$P_l(\O_K)$-module is standard. 
\item[(2)]
The finite orbits arising in Algorithms 
II.1-2, III.1-2, and in 
Main Algorithms 1-2 are too long to list explicitly.
\item[(3)]
The number of standard submodules constructed using Algorithm~I.3 is 
too large to list them explicitly.
\end{items}

\subsubsection{Computing maximal orders and ideal class groups} 

We use classical methods from algorithmic number theory for such 
computations, see \cite[Chapter 6]{Co1993}.
These work well if the discriminant of each irreducible
factor of the minimal polynomial of a matrix is not too large
and can be factorised.
But the minimal polynomial of the following matrix
has discriminant of size $\sim 10^{108}$, 
and computing the maximal order is not feasible.
\[
\left(
\begin{smallmatrix}
 6 & -8 & -4 & -2 &  3 &  8 & -2 &  3 & -1 &  7\cr
 2 &  2 & -6 &  6 &  6 & -1 &  3 &  7 &  1 &  0\cr
 8 & -2 & -1 &  1 & 10 & -3 & -3 & -2 & -2 &  3\cr
 1 & 10 &  1 &-10 &  3 &  5 & -5 &-10 & -7 & -6\cr
 1 &  0 &  3 & -2 &  0 &  6 &  4 &  1 &  1 & -4\cr
 2 & -3 &  9 &  4 & -2 & -8 & -8 &  4 &  4 & -4\cr
 5 & -1 & -4 & -7 & -8 &  8 &  1 &  3 & -6 & 10\cr
-6 & -2 & -7 &  5 & 10 & -8 &  6 &  3 & -8 & -6\cr
-7 & 10 & -5 &  4 &  2 &  3 & -7 &  7 & -8 & -3\cr
-1 &  1 & -3 &  0 &  2 & -9 & -6 & -1 & -6 & -6
\end{smallmatrix}
\right)
\]

\subsubsection{Large orbits} 

Finite orbits and their associated stabilisers are 
constructed using general group theoretic algorithms, 
see \cite[Section 4.1]{HEO05}.
Their success depends heavily on the length of the orbit. 
Consider the following conjugate matrices
\[
{ 
\left(
\begin{smallmatrix}
-3 & -1 & 3 &  0 &  0 & 0 &  0 &  0 & 0\cr
 1 &  0 & 0 &  0 &  0 & 0 &  0 &  0 & 0\cr
-5 &  0 & 1 &  0 &  0 & 0 &  0 &  0 & 0\cr
 0 &  0 & 0 & -3 & -1 & 3 &  0 &  0 & 0\cr
 0 &  0 & 0 &  1 &  0 & 0 &  0 &  0 & 0\cr
 0 &  0 & 0 & -5 &  0 & 1 &  0 &  0 & 0\cr
 0 &  0 & 0 &  0 &  0 & 0 & -3 & -1 & 3\cr
 0 &  0 & 0 &  0 &  0 & 0 &  1 &  0 & 0\cr
 0 &  0 & 0 &  0 &  0 & 0 & -5 &  0 & 1
\end{smallmatrix}
\right),
\left(
\begin{smallmatrix}
 13 & -15 &  16 &   24 &  -16 &  -7 &  -35 &   15 &   0\cr
 -3 &  44 & -40 &  -71 &   62 &  28 &  157 &  -76 &  16\cr
 18 & -15 &  -3 &   -7 &  -31 &   6 & -226 &  129 & -52\cr
-69 &  72 & -55 &  -78 &   86 &  18 &  355 & -186 &  48\cr
-75 &  98 & -82 & -124 &  117 &  35 &  406 & -206 &  46\cr
-45 &  19 & -21 &  -22 &   10 &   1 &   49 &  -25 &  -3\cr
 24 & -66 &  53 &   89 &  -89 & -31 & -289 &  147 & -37\cr
 30 & -78 &  61 &  102 & -104 & -35 & -348 &  178 & -45\cr
 24 &  11 &  -8 &  -23 &   26 &  14 &   58 &  -29 &  11
\end{smallmatrix}
\right),
}
\]
each having characteristic polynomial $(x^3 + x^2 + 13x - 1)^3$. The orbit
of $cM$ in Algorithm III.1 contains at least $382000$ elements. 

\subsubsection{Large numbers of standard submodules}

Consider the conjugate matrices
\[
\left(\begin{smallmatrix}
  13 &    67 & 6 & 0 & 0 & -1\cr
   0 &     1 & 3 & 0 & 0 &  0\cr
   0 &     0 & 1 & 0 & 0 &  0\cr
-270 & -1350 & 0 & 1 & 2 & 20\cr
-135 &  -675 & 0 & 0 & 1 & 10\cr
 -27 &  -135 & 0 & 0 & 0 &  2
\end{smallmatrix}
\right),\quad
\left(\begin{smallmatrix}
  13 &    79 & 0 & 0 &   1 &  -76\cr
   0 &     1 & 0 & 0 &   0 &    3\cr
-270 & -1620 & 1 & 2 & -20 & 1620\cr
-135 &  -810 & 0 & 1 & -10 &  810\cr
  27 &   162 & 0 & 0 &   2 & -162\cr
   0 &     0 & 0 & 0 &   0 &    1
\end{smallmatrix}
\right).
\]
Algorithm~I.3 finds 7144200 standard submodules. 

\section{Variations and open problems}

We may ask for solutions to the conjugacy 
problem in either of $\SL(n,\Z)$ or $\PGL(n, \Z)$. 

\begin{lemma}
If we can solve the integral conjugacy and centraliser problems in 
$\GL(n,\Z)$, then we can solve the conjugacy problem in $\SL(n,\Z)$ 
or in $\PGL(n, \Z)$. 
\end{lemma}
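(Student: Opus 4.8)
The plan is to reduce the conjugacy problem in $\SL(n,\Z)$ (and similarly in $\PGL(n,\Z)$) to a bounded number of conjugacy and centraliser computations in $\GL(n,\Z)$. The starting point is that $\SL(n,\Z)$ is a normal subgroup of $\GL(n,\Z)$ of index $2$, with $\GL(n,\Z)/\SL(n,\Z)$ detected by the determinant $\{\pm 1\}$. Given $T,\hat T \in \SL(n,\Z)$, I would first test whether they are conjugate in $\GL(n,\Z)$; if not, they are certainly not conjugate in $\SL(n,\Z)$, and we return false. So assume $XTX^{-1} = \hat T$ for some $X \in \GL(n,\Z)$. If $\det(X) = 1$ we are done. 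Otherwise $\det(X) = -1$, and $T$ and $\hat T$ are $\SL$-conjugate if and only if there is some $Y \in C_\Z(\hat T)$ with $\det(Y) = -1$: indeed, if $YX$ has determinant $1$ and conjugates $T$ to $Y\hat T Y^{-1} = \hat T$, and conversely any $\SL$-conjugator $Z$ differs from $X$ by an element $XZ^{-1}$ of $C_\Z(\hat T)$ of determinant $-1$. Thus the whole question comes down to: \emph{does the centraliser $C_\Z(\hat T)$ contain an element of determinant $-1$?} Since Main Algorithm~2 produces a finite generating set $\MySigma$ for $C_\Z(\hat T)$, and $\det \colon C_\Z(\hat T) \to \{\pm 1\}$ is a homomorphism, the answer is yes precisely when some generator in $\MySigma$ has determinant $-1$; this is a finite check, and from such a generator we recover an explicit $\SL$-conjugating matrix.

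\textbf{The $\PGL$ case.} For $\PGL(n,\Z)$, recall $\PGL(n,\Z) = \GL(n,\Z)/Z$ where $Z = \{\pm I_n\}$. Images $\bar T, \bar{\hat T}$ are conjugate in $\PGL(n,\Z)$ if and only if $T$ is conjugate in $\GL(n,\Z)$ to either $\hat T$ or $-\hat T$. Both alternatives are decided by two applications of Main Algorithm~1; a conjugator in $\GL(n,\Z)$ then projects to a conjugator in $\PGL(n,\Z)$. (When $n$ is odd one of the two tests is redundant since $-I_n$ already lies in the $\GL$-conjugacy class question trivially; when $n$ is even both must be run.) No centraliser computation is needed here.

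\textbf{Main obstacle.} The only delicate point is the $\SL$ case, and specifically the claim that testing finitely many generators of $C_\Z(\hat T)$ suffices to decide whether the determinant map onto $\{\pm 1\}$ is surjective. This is immediate from the fact that $\{\pm 1\}$ is generated by the images of the generators, so it is not a genuine obstacle---but one should be slightly careful that Main Algorithm~2 really returns a generating set for the \emph{full} integral centraliser (which it does, via Theorems~\ref{module} and~\ref{genauto}), and that one has already secured \emph{some} $\GL$-conjugator $X$ to begin with (from Main Algorithm~1) before asking about determinant $-1$ elements of $C_\Z(\hat T)$. Everything else is bookkeeping: combining the $\GL$-conjugator with the chosen centraliser element to produce the explicit $\SL$- or $\PGL$-conjugating matrix.
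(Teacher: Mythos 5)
Your argument is correct and follows essentially the same route as the paper: reduce $\SL(n,\Z)$-conjugacy to a $\GL(n,\Z)$-conjugator plus a determinant check on the finitely many centraliser generators, and reduce $\PGL(n,\Z)$-conjugacy to the two $\GL(n,\Z)$-tests against $\hat T$ and $-\hat T$. The only cosmetic difference is that the paper shortcuts the case of odd $n$ by using $-X$ directly, whereas your criterion on generators of $C_\Z(\hat T)$ (where $-I$ has determinant $-1$ for odd $n$) subsumes that case automatically.
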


\begin{proof}
Let $T, \hat{T} \in \GL(n,\Q)$ and consider $\SL(n,\Z)$.  
If $T$ and $\hat{T}$ are not conjugate in $\GL(n,\Z)$, then
they are not conjugate in $\SL(n,\Z)$. Otherwise, we obtain 
$X \in \GL(n,\Z)$ with $X T X^{-1} = \hat{T}$. 
If $\det(X) = 1$, then we know a conjugating element in $\SL(n, \Z)$. 
Now suppose $\det(X)=-1$. If $n$ is odd, then $-X \in \SL(n, \Z)$ 
conjugates $T$ to $\hat{T}$. If $n$ is even, then we check if a generator, 
say $g$, of $C_\Z(T)$ has determinant $-1$. If so,
$g X \in \SL(n, \Z)$ conjugates $T$ to $\hat{T}$; otherwise they are not conjugate in $\SL(n, \Z)$. 

Now consider $\PGL(n,\Z)$. Let $Z = \{ I, -I\}$ denote the center of 
$\GL(n,\Z)$. Now $TZ$ and $\hat{T} Z$ are conjugate in $\PGL(n,\Z)$
if and only if there exists $X \in \GL(n,\Z)$ with $X T X^{-1} 
\equiv \hat{T} \bmod Z$. Equivalently either $X T X^{-1} = \hat{T}$ 
or $X T X^{-1} = -\hat{T}$; so we can decide using the solution to  
the integral conjugacy problem in $\GL(n,\Z)$.
\end{proof}

We conclude by identifying related problems of interest. 

\begin{problem}\label{prob-one}
Given $T \in \GL(n,\Q)$, determine a finite presentation for $C_\Z(T)$.
\end{problem}
One possible approach to this challenging problem is to extend our algorithm 
so that finite presentations are constructed in each step.

\begin{problem}\label{prob-two}
Define a canonical form for the conjugacy classes in $\GL(n,\Z)$
and provide a practical algorithm to compute it.
\end{problem}
By contrast, each conjugacy class in $\GL(n,\Q)$ is represented by a 
unique rational canonical form and this can be determined effectively
by a variation of the Gauss algorithm. 

\begin{problem}\label{prob-three}
Solve the conjugacy and the centraliser problem for $\GL(n, K[x])$,
where $K[x]$ is a polynomial ring over a field $K$.
\end{problem}
Since the ring structures of $\Z$ and $K[x]$ are similar, one 
potential approach 
is to translate our algorithms to $\GL(n, K[x])$. 
Grunewald and Iyudu \cite{GrIy05} show how to solve the conjugacy 
problem in $\GL(2, K[x])$, where $K$ is finite; and claim 
that the methods of \cite{Gru80} extend to $\GL(n,K[x])$ where 
$K$ is finite and of characteristic coprime to $n$.

\begin{problem}\label{prob-four}
$T \in \GL(n, \Z)$ is {\em conjugacy distinguished} if 
every $\hat{T} \in \GL(n, \Z)$ is either conjugate to $T$ or there exists a
homomorphism $\varphi$ onto a finite quotient of $\GL(n,\Z)$ in which 
$\varphi(T)$ is not conjugate to $\varphi(\hat{T})$. 
Decide whether or not $T \in \GL(n,\Z)$ is conjugacy distinguished.
\end{problem}
Stebe \cite{Ste72} shows that there 
exist elements in $\GL(n, \Z)$ for all $n \geq 3$ that are not 
conjugacy distinguished. 

\begin{problem}\label{prob-five}
Devise a practical solution to the conjugacy problem in $\GL(n,\O_K)$,
where $\O_K$ is the maximal order of an algebraic number field $K$. 
\end{problem}
Both Grunewald \cite{Gru80} and Sarkisyan \cite{Sar79} state that
it is possible to extend their methods to this case. 
It remains both to verify these claims and to realise them practically.

\begin{problem}\label{prob-six}
Given semisimple $T \in \GL(n,\Q)$, devise an algorithm to
list a complete and irredundant set of representatives for the 
$\GL(n,\Z)$-classes in the $\GL(n,\Q)$-class of $T$.
\end{problem}
The $\GL(n,\Q)$-class of an arbitrary $T \in \GL(n,\Q)$ is the disjoint
union of $\GL(n,\Z)$-classes. The Jordan-Zassenhaus theorem \cite{Zassenhaus37}
shows that this union is finite if and only if $T$ is semisimple. 
If $T$ has an irreducible or squarefree characteristic polynomial,
then 
Problem \ref{prob-six} is solved 
in \cite{LMa1933} and \cite{Mars2018} respectively.

\bibliographystyle{plain}

\begin{thebibliography}{10}

\bibitem{BMS1967}
H.~Bass, J.~Milnor, and J.-P. Serre.
\newblock Solution of the congruence subgroup problem for {${\rm
  SL}_{n}\,(n\geq 3)$} and {${\rm Sp}_{2n}\,(n\geq 2)$}.
\newblock {\em Inst. Hautes \'Etudes Sci. Publ. Math.}, (33):59--137, 1967.

\bibitem{MAGMA}
W.~Bosma, J.~Cannon, and C.~Playoust.
\newblock The {\sc magma} algebra system {I}: The user language.
\newblock {\em J. Symb. Comput.}, 24:235 -- 265, 1997.

\bibitem{Braun2015}
O.~Braun and R.~Coulangeon.
\newblock Perfect lattices over imaginary quadratic number fields.
\newblock {\em Math. Comp.}, 84(293):1451--1467, 2015.

\bibitem{Co1993}
H.~Cohen.
\newblock {\em A course in computational algebraic number theory}, volume 138
  of {\em Graduate Texts in Mathematics}.
\newblock Springer-Verlag, Berlin, 1993.

\bibitem{Co2000}
H.~Cohen.
\newblock {\em Advanced topics in computational number theory}, volume 193 of
  {\em Graduate Texts in Mathematics}.
\newblock Springer-Verlag, New York, 2000.

\bibitem{EGM1998}
J.~Elstrodt, F.~Grunewald, and J.~Mennicke.
\newblock {\em Groups acting on hyperbolic space}.
\newblock Springer Monographs in Mathematics. Springer-Verlag, Berlin, 1998.
\newblock Harmonic analysis and number theory.

\bibitem{GrIy05}
F.~Grunewald and N.K. Iyudu.
\newblock The conjugacy problem for {$2\times 2$} matrices over polynomial
  rings.
\newblock {\em Sovrem. Mat. Prilozh.}, (30, Algebra):31--45, 2005.

\bibitem{GS80}
F.~Grunewald and D.~Segal.
\newblock Some general algorithms, {I}: Arithmetic groups.
\newblock {\em Ann.\ Math.}, (112):531--583, 1980.

\bibitem{Gru80}
F.J. {Grunewald}.
\newblock Solution of the conjugacy problem in certain arithmetic groups.
\newblock In {\em Word problems, {II} ({C}onf. on {D}ecision {P}roblems in
  {A}lgebra, {O}xford, 1976)}, volume~95 of {\em Stud. Logic Foundations
  Math.}, pages 101--139. North-Holland, Amsterdam-New York, 1980.

\bibitem{HEO05}
D.F. Holt, B.~Eick, and E.A. O'Brien.
\newblock {\em Handbook of computational group theory}.
\newblock Discrete Mathematics and its Applications (Boca Raton). Chapman \&
  Hall/CRC, Boca Raton, FL, 2005.

\bibitem{Hus16}
D.~Husert.
\newblock {\em Similarity of integer matrices}.
\newblock PhD thesis, University of Paderborn, 2016.

\bibitem{Kar13}
O.~Karpenkov.
\newblock Multidimensional {G}auss reduction theory for conjugacy classes of
  {${\rm SL}(n,\mathbb Z)$}.
\newblock {\em J. Th\'eor. Nombres Bordeaux}, 25(1):99--109, 2013.

\bibitem{LMa1933}
C.~G. Latimer and C.~C. MacDuffee.
\newblock A correspondence between classes of ideals and classes of matrices.
\newblock {\em Ann. of Math. (2)}, 34(2):313--316, 1933.

\bibitem{Mars2018}
S.~{Marseglia}.
\newblock {Computing the ideal class monoid of an order, arXiv:1805.09671}.
\newblock 2018.

\bibitem{OPSc1998}
J.~Opgenorth, W.~Plesken, and T.~Schulz.
\newblock Crystallographic algorithms and tables.
\newblock {\em Acta Cryst. Sect. A}, 54(5):517--531, 1998.

\bibitem{Sar79}
R.A. Sarkisjan.
\newblock The conjugacy problem for collections of integral matrices.
\newblock {\em Mat. Zametki}, 25(6):811--824, 956, 1979.

\bibitem{Ste72}
P.F. Stebe.
\newblock Conjugacy separability of groups of integer matrices.
\newblock {\em Proc. Amer. Math. Soc.}, 32:1--7, 1972.

\bibitem{Swan1971}
R.G. Swan.
\newblock Generators and relations for certain special linear groups.
\newblock {\em Advances in Math.}, 6:1--77 (1971), 1971.

\bibitem{Vas1972}
L.N. Vaser{\u s}te{\u\i}n.
\newblock The group {$SL_{2}$} over {D}edekind rings of arithmetic type.
\newblock {\em Mat. Sb. (N.S.)}, 89(131):313--322, 351, 1972.

\bibitem{Zassenhaus37}
H.~Zassenhaus.
\newblock Neuer {Beweis} der {Endlichkeit} der {Klassenzahl} bei unimodularer
  {{\"A}}quivalenz endlicher ganzzahliger {Substitutionsgruppen}.
\newblock {\em Abh. Math. Sem. Univ. Hamburg}, 12(1):187--220, 1937.

\end{thebibliography}

\def\cprime{$'$}

\end{document}